\newcommand{\todo}[1]{\vspace{5 mm}\par \noindent
\marginpar{\textsc{ToDo}}
\framebox{\begin{minipage}[c]{0.95 \textwidth}
#1 \end{minipage}}\vspace{5 mm}\par}
\renewcommand{\todo}[1]{}  
\newcommand{\idiot}[1]{\vspace{5 mm}\par \noindent
\marginpar{\textsc{Note}}
\framebox{\begin{minipage}[c]{0.95 \textwidth}
#1 \end{minipage}}\vspace{5 mm}\par}
\renewcommand{\idiot}[1]{}  
\numberwithin{equation}{section}
\theoremstyle{definition}
\newtheorem{definition}[equation]{Definition}
\theoremstyle{definition}
\theoremstyle{definition}
\newtheorem{example}[equation]{Example}
\theoremstyle{definition}
\newtheorem{notation}[equation]{Notation}
\theoremstyle{definition}
\theoremstyle{remark}
\theoremstyle{plain}
\newtheorem{theorem}[equation]{Theorem}
\theoremstyle{plain}
\newtheorem{lemma}[equation]{Lemma}
\theoremstyle{plain}
\newtheorem{proposition}[equation]{Proposition}
\theoremstyle{plain}
\theoremstyle{plain}
\newtheorem{corollary}[equation]{Corollary}
\theoremstyle{plain}
\theoremstyle{plain}
\DeclareMathOperator{\HF}{HF}
\DeclareMathOperator{\reg}{reg}
\DeclareMathOperator{\sqf}{sf}
\DeclareMathOperator{\HS}{HS}
\DeclareMathOperator{\gens}{gens}
\DeclareMathOperator{\spn}{span}
\newcommand{\field}{\Bbbk}
\newcommand{\Vxi}{V_{1}}
\newcommand{\Lxi}{L_{1}}
\newcommand{\Vhatxi}{V_{0}}
\newcommand{\Lhatxi}{L_{0}}
\newcommand{\Vone}{V_{0}}
\newcommand{\Lone}{L_{0}}
\newcommand{\Wone}{W_{0}}
\newcommand{\Wxi}{W_{1}}
\newcommand{\Wxj}{W_{1}}
\newcommand{\gdual}{Nnamztog}
\newcommand{\broom}{bottlebrush{}}
\newcommand{\Broom}{Bottlebrush{}}
\begin{document}

\author{Andrew H. Hoefel}
\address{Andrew H. Hoefel\\
Department of Mathematics and Statistics\\
Dalhousie University\\
Halifax, Nova Scotia B3H 3J5}
\email{andrew.hoefel@mathstat.dal.ca}
\author{Jeff Mermin}
\address{Jeff Mermin\\
Department of Mathematics\\
Oklahoma State University\\
Stillwater, OK 74078}
\email{mermin@math.okstate.edu}

\date{}
\title{Gotzmann squarefree ideals}
\maketitle

Abstract:  We classify the squarefree ideals which are Gotzmann in a
polynomial ring.
\footnote[0]{\emph{2000 Mathematics Subject Classification}:  13F20.\\
\emph{Keywords and Phrases}: Edge ideals, Gotzmann ideals}

\section{Introduction}

Let $S=k[x_{1},\dots,x_{n}]$ and $R$ be either $S/(x_{1}^{2},\dots,
x_{n}^{2})$ or the exterior algebra on the same variables.

The question of what numerical functions can arise as the Hilbert functions of
homogeneous ideals these rings was answered by Macaulay \cite{M1927} (for
$S$) and Kruskal \cite{MR0154827} and Katona \cite{MR0290982} (for $R$).  
Macaulay's theorem and the Kruskal-Katona theorem have seen widespread
application 
in both commutative algebra and combinatorics.
These theorems provide lower bounds on 
the Hilbert function growth of homogeneous ideals, and describe
special ideals, called \emph{lexicographic} or \emph{lex} ideals,
which attain these bounds.  Lex ideals are defined combinatorially,
and are very well understood; for example, their minimal free
resolutions are known explicitly.  

Lex ideals are important tools in many contexts.  In geometry,
Hartshorne's proof that the Hilbert scheme is connected \cite{Ha} uses
lex ideals in an essential way.  More combinatorially, lex ideals and
Macaulay's and Kruskal and Katona's Theorems have arisen in Sperner
theory, network reliability, and other graph problems; see
\cite{MR1429390} or \cite{BL2005}.  

In many of these settings, the only relevant property of the lex
ideals is the slow growth of their Hilbert functions, so it is
worthwhile to consider other ideals whose Hilbert functions achieve
the bounds of Macaulay's (or Kruskal and Katona's) Theorem.  Such
ideals are called \emph{Gotzmann}.  Gotzmann ideals share many nice
properties with lex ideals; for example, they have componentwise
linear resolutions and maximal graded Betti numbers among ideals with
the same Hilbert function \cite{MR1684555}.  

However, only a few classes of Gotzmann ideals are known.  
Murai and Hibi show in \cite{MR2434473} that all Gotzmann ideals of
$S$ with at most $n$ generators have a very specific form.
The problem of understanding monomial Gotzmann ideals has proven
slightly more tractable.  Bonanzinga classifies the principal Borel
ideals generated in degree at most four which are
Gotzmann \cite{Bo2003}.
Mermin enumerates in \cite{MR2253664} the lexlike ideals, ideals which
are generated by initial segments of ``lexlike'' sequences and which
share many properties with lex ideals, including minimal Hilbert
function growth. 
 Murai studies Hilbert functions for which the only Gotzmann monomial ideals
 are lex, and 
classifies the Gotzmann monomial ideals of $k[a,b,c]$, in \cite{MR2379725}, and
Olteanu, Olteanu, and Sorrenti
\cite{OOS2008} classify the Gotzmann ideals which are generated by
(not necessarily initial) segments in the lex order.  
Finally, in \cite{H2009}, Hoefel shows that a
graph has a Gotzmann edge ideal if and only if it is star-shaped.
In this paper,
we generalize Hoefel's result by classifying all the Gotzmann ideals
of $S$ which are generated by squarefree monomials.  An immediate
consequence of our classification is that all such ideals have at most
$n$ generators, so they have the form prescribed by Murai and Hibi.

In section 2, we define notation which will be used throughout the
paper, and recall background information about Gotzmann ideals,
squarefree ideals, and the relationship between $R$ and $S$.
In section 3, we classify the squarefree Gotzmann ideals of $S$.
Finally, in section 4, we begin to study the monomial Gotzmann ideals of $R$.
Our main result is that a Gotzmann ideal has Gotzmann Alexander dual
if and only if all of its degreewise components are lex in some order.

\textbf{Acknowledgements:}  We thank Chris Francisco, Huy T\`ai H\`a and Gwyn Whieldon 
for helpful discussions. The first author also thanks NSERC and the Killam Trusts for their
financial support.

\section{Background and Notation}

Let $S=\field[x_{1},\dots,x_{n}]$ be the polynomial ring and
$R=S/(x_{1}^{2},\dots, x_{n}^{2})$ be the associated ``squarefree ring''.
Let $\mathbf{m}=(x_{1},\dots,x_{n})$ be the homogeneous maximal ideal.

In this paper all ideals are homogeneous. A \emph{monomial ideal} is 
an ideal with a generating set consisting of monomials.
Every monomial $I$ has a canonical minimal set of monomial generators
which we denote $\gens(I)$. When we refer to the generators of a 
monomial ideal --- for example, to count them --- we mean those in 
this canonical generating set.  If all the generators of $I$ are 
squarefree monomials, then we say that $I$ is \emph{squarefree}.  

For an ideal $I$, we write $I_{d}$ for the vector space of degree $d$
forms in $I$. When $I$ is a monomial ideal, a basis for $I_{d}$ is given
by the monomials of degree $d$ that are in $I$.

An important vector space is $S_{d}$ (or $R_{d}$), the space of all
forms of degree $d$.
We call subspaces of $S_d$ or $R_d$ \emph{monomial vector spaces} when they have
a monomial basis. The (unique) monomial basis of a monomial vector space $V_{d}$
will be denoted $\gens(V_{d})$.  Usually, we indicate the degree of a
monomial vector space with a subscript in this way.

The ideal $(V_{d})$ generated by a monomial vector space $V_{d} \subseteq
S_d$
is a monomial ideal 
with with $\gens((V_{d})) = \gens (V_{d})$. We will often need to consider the
monomial vector space  
\[ \mathbf{m}_1 V_{d} = \spn_{\field}  \{ x_i m \mid m \in \gens(V_{d})\}.
\]
While we treat this as a product of monomial vector spaces, it has a
natural interpretation in terms of ideals.  If $I=(V_{d})$ is the
ideal generated by $V_{d}$, then
$\mathbf{m}_1 V_{d} = I_{d+1}$.

We will write $|V_{d}|$ or sometimes $|V|_{d}$ to denote the vector
space dimension of $V_{d}$. 

\begin{definition}
The \emph{Hilbert function} of an ideal $I$ is the function $\HF_{I}: \mathbb N \to \mathbb N$
which gives the dimension of each component of $I$:
\[	\HF_{I}(d) = |I_d|.
\]
\end{definition}

When $I$ is a monomial ideal $|I_d| = |\gens(I_d)|$ is simply the
number of degree $d$ monomials in $I$.  If $I$ is squarefree, we write
$I^{\sqf}=IR$ for the corresponding ideal 
of $R$. Thus $|I^{\sqf}_d|$ is the number of squarefree monomials in
$I$ of degree $d$. 

The \emph{Hilbert series} and \emph{squarefree Hilbert series} of $I$
are $\HS_{I}(t)=\sum_{d=0}^{\infty}|I_{d}|t^{d}$ and
$\HS_{I}^{\sqf}(t)=\sum_{d=0}^{\infty} |I_{d}^{\sqf}|t^{d}$.  If $I$ is
squarefree, these are related by the formula
$\HS_{I}(t)=\HS_{I}^{\sqf}(\frac{t}{1-t})$.

\begin{definition}
We say that an ideal $I$ of $S$ (or, with the obvious changes, of $R$)
is \emph{Gotzmann} if for all degrees $d$ and all ideals $J$ satisfying $|I_d| = |J_d|$
we have $|\mathbf{m}_1 I_d| \leq |\mathbf{m}_1 J_d|$.
\end{definition}

The Gotzmann property may be viewed degreewise as a property of vector spaces.
We say that a vector space $V_{d} \subseteq S_d$ (or $R_d$) is \emph{Gotzmann}  
if for all subspaces $W_{d} \subseteq S_d$ (resp. $R_d$) with $|V_{d}| = |W_{d}|$
we have $|\mathbf{m}_1 V_{d}| \leq |\mathbf{m}_1 W_{d}|$. Thus an ideal $I$ is
Gotzmann if and only if each component $I_{d}$ is Gotzmann.

\begin{definition}
A vector space $L_{d} \subseteq S_d$ (or $R_d$) is a \emph{lex segment} if 
it is spanned by an initial segment of the degree $d$ monomials in
lexicographic order. 
An ideal is \emph{lex} if each of its components are lex segments.
We say that a squarefree ideal $L$ is \emph{squarefree lex} if
$L^{\sqf}$ is lex in $R$. 
\end{definition}

Lex ideals are important because they are very well understood
combinatorially, and because of the following theorem of Macaulay
\cite{M1927} (in $S$) and Kruskal and Katona \cite{MR0154827, MR0290982} (in $R$):

\begin{theorem}[Macaulay, Kruskal, Katona]
Lex ideals are Gotzmann.
\end{theorem}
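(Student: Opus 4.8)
The plan is to recognize this as the classical theorem of Macaulay \cite{M1927} in $S$ and of Kruskal \cite{MR0154827} and Katona \cite{MR0290982} in $R$, and to sketch a proof for $S$; the squarefree case is entirely parallel. By the degreewise reformulation of the Gotzmann property, it suffices to fix a degree $d$ and show that, among all subspaces $W_{d}\subseteq S_{d}$ of a given dimension, a lex segment $L_{d}$ minimizes $|\mathbf{m}_{1}W_{d}|$.

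First I would reduce to the case of monomial vector spaces. Fix any term order $\prec$ on $S$. Given an arbitrary $W_{d}$, let $\operatorname{in}_{\prec}(W_{d})$ be the monomial vector space spanned by the leading terms of the nonzero elements of $W_{d}$; it has the same dimension as $W_{d}$. If $m\in\gens(\operatorname{in}_{\prec}(W_{d}))$, write $m=\operatorname{in}_{\prec}(v)$ with $v\in W_{d}$; then $x_{i}m=\operatorname{in}_{\prec}(x_{i}v)\in\operatorname{in}_{\prec}(\mathbf{m}_{1}W_{d})$ for each $i$, so $\mathbf{m}_{1}\operatorname{in}_{\prec}(W_{d})\subseteq\operatorname{in}_{\prec}(\mathbf{m}_{1}W_{d})$ and hence $|\mathbf{m}_{1}\operatorname{in}_{\prec}(W_{d})|\le|\mathbf{m}_{1}W_{d}|$. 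Thus it is enough to minimize $|\mathbf{m}_{1}W_{d}|$ over monomial vector spaces.

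The combinatorial core is a compression argument, carried out by induction on $n$; the case $n=1$ is trivial. Write $S'=\field[x_{1},\dots,x_{n-1}]$ and $\mathbf{m}'=(x_{1},\dots,x_{n-1})$, and decompose a monomial vector space $W_{d}=\bigoplus_{a\ge 0}x_{n}^{a}W^{(a)}$ with $W^{(a)}\subseteq S'_{d-a}$ monomial. Let $\widetilde W_{d}=\bigoplus_{a}x_{n}^{a}L^{(a)}$ be the \emph{$x_{n}$-compression}, where $L^{(a)}\subseteq S'_{d-a}$ is the lex segment of dimension $|W^{(a)}|$. A direct computation gives $\mathbf{m}_{1}W_{d}=\bigoplus_{a}x_{n}^{a}\bigl(W^{(a-1)}+\mathbf{m}'_{1}W^{(a)}\bigr)$ (with $W^{(-1)}=0$), and likewise for $\widetilde W_{d}$, so $|\mathbf{m}_{1}W_{d}|=\sum_{a}|W^{(a-1)}+\mathbf{m}'_{1}W^{(a)}|$. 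For each $a$, both $L^{(a-1)}$ and $\mathbf{m}'_{1}L^{(a)}$ are lex segments of $S'_{d-a+1}$ --- the latter because the image of a lex segment under $\mathbf{m}'_{1}$ is again a lex segment, a standard fact --- so they are nested, giving $|L^{(a-1)}+\mathbf{m}'_{1}L^{(a)}|=\max\bigl(|L^{(a-1)}|,|\mathbf{m}'_{1}L^{(a)}|\bigr)$. Since $|L^{(a-1)}|=|W^{(a-1)}|$ and, by the inductive hypothesis applied in $S'$, $|\mathbf{m}'_{1}L^{(a)}|\le|\mathbf{m}'_{1}W^{(a)}|$, each of these quantities is at most $|W^{(a-1)}+\mathbf{m}'_{1}W^{(a)}|$, and therefore $|\mathbf{m}_{1}\widetilde W_{d}|\le|\mathbf{m}_{1}W_{d}|$.

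To finish I would iterate compression with respect to each variable in turn; since there are only finitely many monomial vector spaces of a given dimension and each step preserves the dimension without increasing $|\mathbf{m}_{1}W_{d}|$, the process stabilizes at a monomial vector space fixed by all these compressions, and a combinatorial check identifies such a space as the lex segment $L_{d}$ of that dimension. The squarefree version is proved in the same way, using the decomposition $R_{d}=R'_{d}\oplus x_{n}R'_{d-1}$ (where $R'=\field[x_{1},\dots,x_{n-1}]/(x_{1}^{2},\dots,x_{n-1}^{2})$) and the Kruskal--Katona bound in place of Macaulay's, the only extra subtlety being that each squarefree slice lies in a squarefree ring on fewer variables. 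I expect the main obstacle to be exactly this interaction between the $x_{n}$-based decomposition and the lex order, which is organized around $x_{1}$: one must verify both that a single $x_{n}$-compression genuinely cannot increase the growth (the computation above, which rests on the nesting of lex segments and the inductive bound) and that the completely compressed limit is honestly lex rather than merely ``compressed in every direction,'' with the analogous bookkeeping in $R$ being the phenomenon underlying the Kruskal--Katona and Clements--Lindström theorems.
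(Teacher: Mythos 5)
The paper states this theorem without proof, citing it as the classical result of Macaulay (for $S$) and Kruskal--Katona (for $R$); there is no argument in the paper to compare against, so the question is simply whether your sketch is correct.

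The reduction to monomial vector spaces via initial terms is fine, and the single-variable compression step is also fine: the slice formula $\mathbf{m}_1 W_d = \bigoplus_a x_n^a\bigl(W^{(a-1)}+\mathbf{m}'_1 W^{(a)}\bigr)$, the nesting of the lex segments $L^{(a-1)}$ and $\mathbf{m}'_1 L^{(a)}$, and the inductive bound in $S'$ do combine to give $|\mathbf{m}_1 \widetilde W_d|\le|\mathbf{m}_1 W_d|$. (Your termination argument is a little loose --- ``finitely many states and non-increasing growth'' does not by itself rule out cycling --- but this is easily fixed, for instance by noting that each nontrivial compression strictly decreases the total lex rank of the basis monomials.)

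The genuine gap is the last step, where you assert that a space compressed with respect to every variable must be the lex segment, relegating this to ``a combinatorial check.'' That assertion is false, and in fact a fully compressed space need not even have minimal growth, so the iteration can stall at a non-optimal fixed point. In $S=\field[x,y,z]$, the space $W=\spn_{\field}\{x^2,xy,y^2\}\subset S_2$ is compressed with respect to each of $x$, $y$, and $z$ (every slice $W^{(a)}$ is a lex segment in the two remaining variables), yet $W$ is not the lex segment $\spn_{\field}\{x^2,xy,xz\}$ and $|\mathbf{m}_1 W|=7>6=|\mathbf{m}_1 \spn_{\field}\{x^2,xy,xz\}|$. The same phenomenon occurs in $R$: the paper itself observes that $\spn_{\field}\{ab,ac,bc\}\subset \field[a,b,c,d]/(a^2,\dots,d^2)$ is not Gotzmann, and one checks it is compressed with respect to every variable. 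So the bound $|\mathbf{m}_1 L_d|\le|\mathbf{m}_1 W_d|$ does not follow from compression alone. Closing this gap is exactly where the work in Macaulay's theorem, Kruskal--Katona, and Clements--Lindstr\"om lives: one must compare the lex segment directly with an $x_n$-compressed space, typically by a careful analysis of the Macaulay (binomial) representations of the slice dimensions $|W^{(a)}|$ or by a targeted exchange argument inside a compressed space, rather than by iterating compression to a fixed point and declaring it lex.
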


\begin{corollary}
For every homogeneous ideal $I$ there is a unique lex ideal $L$ with
$\HF(L)=\HF(I)$. 
\end{corollary}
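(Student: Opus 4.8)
The plan is to construct $L$ explicitly, degree by degree, as the unique lex segments of the prescribed dimensions, and then to check that these segments fit together into an ideal; uniqueness will then be essentially free. For existence, for each $d$ let $L_{d} \subseteq S_{d}$ be the lex segment spanned by the first $|I_{d}|$ monomials of degree $d$ in lex order (this makes sense since $|I_{d}| \leq |S_{d}|$), and set $L = \bigoplus_{d} L_{d}$. By construction $\HF(L) = \HF(I)$, and $L$ is a graded $\field$-subspace of $S$, so the only thing to verify is that $L$ is an ideal, i.e. that $\mathbf{m}_1 L_{d} \subseteq L_{d+1}$ for every $d$.

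Two ingredients go into this. The first is a purely combinatorial lemma: if $V_{d}$ is a lex segment then $\mathbf{m}_1 V_{d}$ is again a lex segment. The second is the theorem of Macaulay, Kruskal, and Katona above: since a lex segment is Gotzmann and $|L_{d}| = |I_{d}|$, we get $|\mathbf{m}_1 L_{d}| \leq |\mathbf{m}_1 I_{d}|$, and as $\mathbf{m}_1 I_{d} = I_{d+1}$ this gives
\[ |\mathbf{m}_1 L_{d}| \;\leq\; |\mathbf{m}_1 I_{d}| \;=\; |I_{d+1}| \;=\; |L_{d+1}|. \]
Now $\mathbf{m}_1 L_{d}$ and $L_{d+1}$ are both lex segments, and of two lex segments of different dimensions the smaller is contained in the larger; hence $\mathbf{m}_1 L_{d} \subseteq L_{d+1}$, as needed. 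For uniqueness, suppose $L$ and $L'$ are lex ideals with $\HF(L) = \HF(L')$. In each degree $d$ the components $L_{d}$ and $L'_{d}$ are both lex segments of dimension $\HF_{L}(d)$; but a lex segment is determined by its dimension, being the span of that many leading monomials. Hence $L_{d} = L'_{d}$ for all $d$, so $L = L'$.

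The one point requiring genuine work is the combinatorial lemma. The clean way to phrase it is to observe that $\mathbf{m}_1 V_{d} = \{\, u \in S_{d+1} : u/x_{j(u)} \in V_{d}\,\}$, where $j(u)$ denotes the largest index of a variable dividing $u$: indeed, among the monomials $u/x_{k}$ with $x_{k}\mid u$, the lex-largest is $u/x_{j(u)}$, so a lex segment contains one of them exactly when it contains that one. With this in hand, the lemma reduces to the statement that dividing out the last variable is order-preserving: if $u \succeq v$ are monomials of the same degree, then $u/x_{j(u)} \succeq v/x_{j(v)}$. This last fact is proved by a direct comparison of exponent vectors, splitting into cases according to whether the first index at which $u$ and $v$ differ coincides with $j(u)$ or $j(v)$; the only delicate case, where $j(u)$ equals that first index, is settled by noting that the total-degree constraint forces $u/x_{j(u)} = v/x_{j(v)}$ there. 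I expect this exponent bookkeeping to be the main (though routine) obstacle; everything else is a formal consequence of it together with the Gotzmann property of lex ideals.
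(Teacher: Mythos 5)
Your proof is correct, and it is the standard argument for this fact; the paper itself states the corollary without proof, treating it as well known. The structure is right: define $L_d$ as the lex segment of dimension $|I_d|$; observe by Macaulay (applied to the Gotzmann vector space $L_d$) that $|\mathbf{m}_1 L_d| \le |\mathbf{m}_1 I_d| \le |I_{d+1}| = |L_{d+1}|$; and conclude $\mathbf{m}_1 L_d \subseteq L_{d+1}$ once you know $\mathbf{m}_1 L_d$ is itself a lex segment, since lex segments of the same degree are totally ordered by inclusion. Uniqueness is immediate, as you say.

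One small imprecision in the sketch of the combinatorial lemma: in the ``delicate case'' where the first index $i$ at which $u$ and $v$ differ equals $j(u)$, the degree constraint does not always force $u/x_{j(u)} = v/x_{j(v)}$. Writing exponent vectors $a$, $b$ for $u$, $v$, one has $a_i > b_i$ and $a_k = 0$ for $k > i$; comparing at position $i$, the exponents of $u/x_{j(u)}$ and $v/x_{j(v)}$ are $a_i - 1$ and $b_i$. If $a_i - 1 > b_i$ you already get strict lex dominance; only if $a_i - 1 = b_i$ does the degree count force all later exponents of $v/x_{j(v)}$ to vanish and hence force equality. Either way you land on $u/x_{j(u)} \succeq v/x_{j(v)}$, so the argument survives, but the case split should be stated as giving $\succeq$ rather than $=$. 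With that tightened, the proof is complete.
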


Macaulay's theorem allows the following alternative characterization
of Gotzmann ideals: 
\begin{proposition} \label{gotzByLexSeg}
A degree $d$ monomial vector space $V_{d}$ is Gotzmann if and only if $|\mathbf{m}_1 V_{d}| = |\mathbf{m}_1 L_{d}|$ where $L_{d}$ is the degree $d$ lex segment of dimension $|V_{d}|$.
\end{proposition}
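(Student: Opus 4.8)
The plan is to deduce this directly from Macaulay's theorem together with the definition of the Gotzmann property. First I would set $\ell = |V_d|$ and let $L_d$ be the unique degree $d$ lex segment with $|L_d| = \ell$; such a segment exists because there are $\binom{n+d-1}{d}$ monomials of degree $d$ and any initial count up to that bound is realized by an initial segment. The key input is the theorem of Macaulay (stated above), which tells us $L_d$ is Gotzmann: for every subspace $W_d \subseteq S_d$ with $|W_d| = \ell$ we have $|\mathbf{m}_1 L_d| \le |\mathbf{m}_1 W_d|$. In particular, applying this with $W_d = V_d$ gives $|\mathbf{m}_1 L_d| \le |\mathbf{m}_1 V_d|$ unconditionally.

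For the forward direction, suppose $V_d$ is Gotzmann. By definition this means $|\mathbf{m}_1 V_d| \le |\mathbf{m}_1 W_d|$ for every $W_d$ with $|W_d| = \ell$; taking $W_d = L_d$ yields $|\mathbf{m}_1 V_d| \le |\mathbf{m}_1 L_d|$. Combined with the reverse inequality from Macaulay's theorem, this forces $|\mathbf{m}_1 V_d| = |\mathbf{m}_1 L_d|$, which is the asserted equality.

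For the converse, suppose $|\mathbf{m}_1 V_d| = |\mathbf{m}_1 L_d|$. Let $W_d \subseteq S_d$ be any subspace with $|W_d| = \ell = |V_d|$. Since $L_d$ is Gotzmann, $|\mathbf{m}_1 L_d| \le |\mathbf{m}_1 W_d|$, and hence $|\mathbf{m}_1 V_d| = |\mathbf{m}_1 L_d| \le |\mathbf{m}_1 W_d|$. As $W_d$ was arbitrary, $V_d$ is Gotzmann by definition. This completes both implications.

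There is no real obstacle here: the statement is essentially a repackaging of the fact that the lex segment is the unique ``extremal'' object guaranteed by Macaulay's theorem, so the Gotzmann condition (a priori quantified over all subspaces) reduces to a single comparison against $L_d$. The only point requiring a word of care is that one should note $L_d$ itself is a monomial vector space and its image $\mathbf{m}_1 L_d$ is computed combinatorially; but none of this affects the logical structure of the argument, which is just chaining the two inequalities $|\mathbf{m}_1 L_d| \le |\mathbf{m}_1 V_d|$ (Macaulay) and $|\mathbf{m}_1 V_d| \le |\mathbf{m}_1 L_d|$ (Gotzmann hypothesis, or vice versa for the converse).
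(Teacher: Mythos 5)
Your proof is correct and is exactly the intended argument: the paper states this proposition without proof, as an immediate consequence of Macaulay's theorem, and the chain of two inequalities you give (Macaulay for $|\mathbf{m}_1 L_d| \le |\mathbf{m}_1 V_d|$, the Gotzmann hypothesis or conclusion for the reverse) is the implicit reasoning.
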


\begin{corollary}\label{gotzByGenerators}
Let $L$ be the lex ideal with the same Hilbert function as $I$.
Then $I$ is Gotzmann if and only if $L$ and $I$ have the same number
of generators in each degree. 
\begin{proof}
The number of degree $d$ generators of $I$ and $L$ is given by 
$|I_{d}|-|\mathbf{m}_1 I_{d-1}|$  
and $|L_d| - |\mathbf{m}_1 L_{d-1}|$, respectively.
Since $I$ and $L$ have the same Hilbert function, they
have the same number of generators in degree $d$ if and only if
$I_{d-1}$ is Gotzmann. 
\end{proof}
\end{corollary}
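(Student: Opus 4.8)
The plan is to express the minimal number of degree $d$ generators of a homogeneous ideal in terms of Hilbert function data and the operator $\mathbf{m}_1$, and then to compare these counts for $I$ and for its lex ideal $L$ one degree at a time.

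First I would invoke the graded Nakayama lemma: the minimal number of generators of $I$ in degree $d$ equals $\dim_\field(I/\mathbf{m}I)_d = |I_d| - |\mathbf{m}_1 I_{d-1}|$, where we set $I_{-1}=0$ so that the degree $0$ count is simply $|I_0|$. (One uses here that $\mathbf{m}_1 I_{d-1} = S_1\cdot I_{d-1}$ already contains every product of a form with a lower-degree element of $I$, so it agrees with $(\mathbf{m}I)_d$.) The same formula applies to $L$. Since $\HF(I)=\HF(L)$ forces $|I_e| = |L_e|$ for all $e$, the two ideals have equally many degree $d$ generators if and only if $|\mathbf{m}_1 I_{d-1}| = |\mathbf{m}_1 L_{d-1}|$.

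Next I would bring in Macaulay's theorem (resp.\ Kruskal--Katona in $R$). Because $L$ is lex it is Gotzmann, so the vector space $L_{d-1}$ is Gotzmann; as $|L_{d-1}| = |I_{d-1}|$, the definition of a Gotzmann vector space yields $|\mathbf{m}_1 L_{d-1}| \le |\mathbf{m}_1 I_{d-1}|$. Thus $I$ never has more degree $d$ generators than $L$, and it has exactly as many precisely when equality holds above. Since $L_{d-1}$ is the lex segment of dimension $|I_{d-1}|$, this is exactly the condition that $I_{d-1}$ be a Gotzmann vector space --- Proposition \ref{gotzByLexSeg} when $I$ is monomial, and immediate from the definition of Gotzmann together with the Gotzmann property of lex segments in general.

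Finally I would assemble the equivalence: $I$ and $L$ agree on the number of generators in every degree $\iff$ $I_{d-1}$ is Gotzmann for every $d\ge 1$ $\iff$ $I_e$ is Gotzmann for every $e\ge 0$ $\iff$ $I$ is Gotzmann, where the last step is the degreewise reformulation of the Gotzmann property recorded just after the definition. The degree $0$ comparison is automatic and carries no information, mirroring the absence of an ``$I_{-1}$'' to be constrained. I expect no serious obstacle: the substance is entirely supplied by Macaulay's theorem, which we may assume, and the only points requiring care are the edge case $d=0$ and converting the one-sided inequality on generator counts into a genuine two-directional equivalence.
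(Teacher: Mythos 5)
Your proof is correct and follows the same route as the paper's: express the number of degree-$d$ generators as $|I_d|-|\mathbf{m}_1 I_{d-1}|$ via graded Nakayama, note that the Hilbert functions agree, and conclude that equality of generator counts in degree $d$ is precisely the statement that $I_{d-1}$ is Gotzmann. You merely make explicit the justifications (Nakayama, the one-sided inequality $|\mathbf{m}_1 L_{d-1}|\le|\mathbf{m}_1 I_{d-1}|$ from Macaulay, and the $d=0$ edge case) that the paper's terse proof leaves to the reader.
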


Gotzmann's persistance theorem \cite{MR0480478} states that if $V_{d}$ is a
Gotzmann vector space then 
$\mathbf{m}_1 V_{d}$ is also Gotzmann. Thus, to check if an ideal is
Gotzmann, one only needs  
to check that its components are Gotzmann in the degrees of its
minimal generators. 

\begin{theorem}[Gotzmann's persistence theorem, vector space version]
Suppose that $I_{d}$ is a Gotzmann vector space.  Then $m_{1}I_{d}$ is
Gotzmann.
\end{theorem}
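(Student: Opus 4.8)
The plan is to reduce Gotzmann's persistence theorem to a statement about lex segments by using the numerical characterization from Proposition~\ref{gotzByLexSeg}. The key observation is that the Gotzmann property of a monomial vector space, and of its shift $\mathbf{m}_1 V_d$, can be detected purely numerically by comparing dimensions against lex segments. So the first step is to replace $I_d$ by the lex segment $L_d$ of the same dimension: since $I_d$ is Gotzmann, $|\mathbf{m}_1 I_d| = |\mathbf{m}_1 L_d|$, and I want to show $|\mathbf{m}_1(\mathbf{m}_1 I_d)| = |\mathbf{m}_1(\mathbf{m}_1 L_d)|$, i.e., that $\mathbf{m}_1 I_d$ has the same "next growth" as the lex segment of its dimension. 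Because lex ideals are Gotzmann (the Macaulay/Kruskal--Katona theorem), $\mathbf{m}_1 L_d = (L_d)_{d+1}$ is itself a lex segment, so $\mathbf{m}_1 L_d$ is automatically Gotzmann. Hence the entire content of the theorem is the purely numerical assertion that if a vector space has the same dimension as a lex segment and the same number of generators as that lex segment (which is what "$I_d$ Gotzmann" buys us, via Corollary~\ref{gotzByGenerators}), then its image under $\mathbf{m}_1$ again has minimal growth.

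**Second, I would** make precise the numerical engine. Macaulay's theorem gives, for each dimension $e$ in degree $d+1$, a minimal possible value $e^{\langle d+1\rangle}$ (the dimension of $\mathbf{m}_1$ applied to the lex segment of dimension $e$), and the lex ideal computation shows that iterating this on a lex ideal is consistent: $\bigl(|L_d|^{\langle d\rangle}\bigr)$ computed in degree $d+1$, then pushed to degree $d+2$, agrees with $|L_{d+2}| = |L_d|^{\langle d\rangle\langle d+1\rangle}$. So I would argue: let $e = |I_d| = |L_d|$. Since $I_d$ is Gotzmann, $|\mathbf{m}_1 I_d| = e^{\langle d\rangle} = |\mathbf{m}_1 L_d|$. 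Now $\mathbf{m}_1 L_d$ is the lex segment in degree $d+1$ of dimension $e^{\langle d\rangle}$, so $|\mathbf{m}_1(\mathbf{m}_1 L_d)| = \bigl(e^{\langle d\rangle}\bigr)^{\langle d+1\rangle}$. By Macaulay's theorem applied in degree $d+1$, any vector space of dimension $e^{\langle d\rangle}$ — in particular $\mathbf{m}_1 I_d$ — satisfies $|\mathbf{m}_1(\mathbf{m}_1 I_d)| \ge \bigl(e^{\langle d\rangle}\bigr)^{\langle d+1\rangle}$. By Proposition~\ref{gotzByLexSeg}, to conclude that $\mathbf{m}_1 I_d$ is Gotzmann it suffices to show equality holds, i.e.\ the reverse inequality $|\mathbf{m}_1(\mathbf{m}_1 I_d)| \le \bigl(e^{\langle d\rangle}\bigr)^{\langle d+1\rangle}$.

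**The main obstacle** is exactly this reverse inequality: a priori $\mathbf{m}_1 I_d$ could grow \emph{faster} than the lex segment of the same dimension, which is not forbidden by Macaulay (Macaulay only bounds growth from below). The content of persistence is that it does not. I see two routes. The cleaner route is to appeal to the known classification of the minimal free resolution / Betti numbers: $\mathbf{m}_1(\mathbf{m}_1 I_d)$ is spanned by degree-$(d+2)$ multiples of a \emph{generating set in degree $d+1$} of size $|\mathbf{m}_1 I_d| - (\text{number of degree-}d\text{ generators carried along}) = e^{\langle d\rangle}$ with no new generators — combined with the combinatorial identity (provable by the compression/shifting argument used in the standard proof of Macaulay's theorem, or by Green's hyperplane restriction theorem) that the number of degree-$(d+2)$ monomials divisible by \emph{some} element of a set $M$ of degree-$(d+1)$ monomials with $|M| = e^{\langle d\rangle}$ is at most $\bigl(e^{\langle d\rangle}\bigr)^{\langle d+1\rangle}$ \emph{provided} $M$ itself already has the shape $\mathbf{m}_1(\text{something with }e\text{ generators})$. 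Since this self-contained combinatorial argument is essentially a repackaging of Macaulay's original proof, I would present the theorem as a direct corollary: cite Gotzmann~\cite{MR0480478} for the general (non-monomial) case, and for the monomial vector space version observe that it follows from the displayed equality chain above together with one application of Macaulay's theorem in each of degrees $d$ and $d+1$, the only genuinely new input being the equality $\mathbf{m}_1 L_d = (L_d)_{d+1}$ which is immediate from the definition of a lex ideal. Thus the write-up is short, and the "hard part" has been offloaded to the Macaulay/Kruskal--Katona theorem already invoked.
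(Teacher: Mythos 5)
The paper does not prove this theorem; it is stated with citations to Gotzmann's original paper \cite{MR0480478} and to \cite{MR1444495, MR879339} for the $S$ and $R$ versions. So there is no internal proof to compare your proposal against.

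Your write-up contains an internal contradiction that amounts to a genuine gap. In the middle paragraph you correctly observe that the entire content of the theorem is the \emph{upper} bound $|\mathbf{m}_1(\mathbf{m}_1 I_d)| \le \bigl(e^{\langle d\rangle}\bigr)^{\langle d+1\rangle}$, and you correctly note that Macaulay's theorem cannot give this because Macaulay only bounds shadow sizes from below. But your final paragraph asserts the opposite: that the theorem ``follows from the displayed equality chain together with one application of Macaulay's theorem in each of degrees $d$ and $d+1$'' and that ``the hard part has been offloaded to Macaulay/Kruskal--Katona.'' That cannot be right, for exactly the reason you gave two sentences earlier: Macaulay in degree $d+1$ yields $|\mathbf{m}_1(\mathbf{m}_1 I_d)| \ge \bigl(e^{\langle d\rangle}\bigr)^{\langle d+1\rangle}$, the wrong direction. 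The ``cleaner route'' you sketch --- a combinatorial identity about monomial sets of the shape $\mathbf{m}_1(\text{something})$, ``provable by the compression/shifting argument'' --- is never carried out, and it is precisely where the actual work lives (one must show that the shadow of $\mathbf{m}_1 I_d$ is controlled because $\mathbf{m}_1 I_d$ has no fresh generators; this requires a compression or shifting argument specific to persistence, not a reapplication of Macaulay). As things stand, the only load-bearing sentence in your proof is the citation to Gotzmann, which is what the paper itself does; the surrounding reductions, while not incorrect, do not discharge anything the citation doesn't already cover, and the concluding claim that Macaulay suffices is false.
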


\begin{theorem}[Gotzmann's persistence theorem, ideal version]
Suppose that every generator of $I$ has degree at most $d$, and let $L$ be the
lex ideal with the same Hilbert function as $I$.  If $L$ has no
generators of degree $d+1$, then all generators of $L$ have degree at
most $d$.  In particular, if $I$ and $L$ have the same number of
generators in every degree less than or equal to $d+1$, then $I$ is Gotzmann.
\end{theorem}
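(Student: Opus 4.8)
The plan is to deduce the ideal version from the vector space version (which we may assume), together with Proposition~\ref{gotzByLexSeg} and Corollary~\ref{gotzByGenerators}. First I would set up the dictionary: since every generator of $I$ has degree at most $d$, we have $I_{d+1} = \mathbf{m}_1 I_d$, and hence the number of degree $d+1$ generators of $I$ is $|I_{d+1}| - |\mathbf{m}_1 I_d| = 0$. Let $L$ be the lex ideal with $\HF(L) = \HF(I)$. Because $L$ is lex it is Gotzmann (Macaulay's theorem), so every component $L_e$ is a Gotzmann vector space; in particular $L_d$ is Gotzmann. The hypothesis says $L$ has no generators in degree $d+1$, i.e. $|L_{d+1}| = |\mathbf{m}_1 L_d|$.

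Next I would run the induction upward. The key engine is the vector space persistence theorem: since $L_d$ is Gotzmann, $\mathbf{m}_1 L_d$ is Gotzmann, and I claim $L_{d+1} = \mathbf{m}_1 L_d$. Indeed $\mathbf{m}_1 L_d \subseteq L_{d+1}$ always, and the two spaces have the same dimension by the no-new-generators hypothesis in degree $d+1$; hence they are equal, so $L_{d+1}$ is a Gotzmann vector space. Now repeat: $L_{d+1}$ Gotzmann forces $\mathbf{m}_1 L_{d+1}$ Gotzmann; but I still need to know $L_{d+2} = \mathbf{m}_1 L_{d+1}$, i.e. that $L$ acquires no generator in degree $d+2$. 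This is where I use the characterization of Gotzmann by generators (Corollary~\ref{gotzByGenerators}, or directly the numerology): the number of degree $e$ generators of a lex ideal is $|L_e| - |\mathbf{m}_1 L_{e-1}|$, and once $L_{e-1}$ is Gotzmann this is zero exactly when $\mathbf{m}_1 L_{e-1} = L_e$. Combining with $|L_{e}| = |\mathbf{m}_1 L_{e-1}|$ forced by Macaulay growth applied to the lex ideal — more precisely, since $L_{e-1} = \mathbf{m}_1 L_{e-2} = \cdots = \mathbf{m}_1^{e-1-d} L_d$ and $L_d$ is Gotzmann, persistence iterated gives that each $L_{e-1}$ is Gotzmann and $\mathbf{m}_1 L_{e-1}$ is the lex segment of the appropriate dimension, which must coincide with $L_e$. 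Thus by induction on $e \geq d+1$, $L$ has no generators of degree $e$, so all generators of $L$ have degree at most $d$.

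For the final sentence: suppose $I$ and $L$ have the same number of generators in every degree $\leq d+1$. By Corollary~\ref{gotzByGenerators}, agreement of generator counts through degree $d+1$ means $I_e$ is Gotzmann for all $e \leq d$. By the first part of the theorem (which we have just proved), $L$ has all its generators in degree $\leq d$, so $L$ and $I$ in fact agree in generator counts in every degree, whence $I_e$ is Gotzmann for all $e$, i.e.\ $I$ is Gotzmann.

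I expect the main obstacle to be the bookkeeping in the inductive step: carefully justifying that ``no generator in degree $d+1$'' propagates to ``no generator in degree $e$'' for all larger $e$. The subtlety is that vector space persistence only hands us Gotzmannness of $\mathbf{m}_1 L_d$; to keep iterating I must repeatedly identify $\mathbf{m}_1 L_{e-1}$ with $L_{e}$, which relies on the generator-count numerology being zero at each stage, and that in turn needs the Gotzmann property already established at the previous stage — so the two facts (Gotzmann, and no new generators) must be carried along together in the induction hypothesis rather than separately.
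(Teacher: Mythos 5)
The paper does not actually prove the ideal version (it cites \cite{MR1444495, MR879339}), so there is no in-paper argument to compare against; the question is simply whether your proof is correct. It is not: the inductive step has a genuine gap, and you half-recognize it in your final paragraph without repairing it. You prove $L_{d+1}=\mathbf{m}_1 L_d$ correctly from the degree-$(d+1)$ hypothesis, but to advance from $e-1$ to $e$ you assert that $\mathbf{m}_1 L_{e-1}$ ``is the lex segment of the appropriate dimension, which must coincide with $L_e$.'' That is precisely the claim to be proved, and nothing in your argument supplies the needed equality $|\mathbf{m}_1 L_{e-1}| = |L_e|$; Macaulay's theorem only gives $|\mathbf{m}_1 L_{e-1}|\leq |L_e|$. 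The $L$-only statement you are implicitly relying on is in fact false: the lex ideal $L=(x,y^3)\subset\field[x,y,z]$ has a generator in degree $1$, none in degree $2$, and then one in degree $3$, so ``no generator in degree $d+1$'' does not propagate for arbitrary lex ideals. Your induction never invokes the hypothesis that $I$ has all its generators in degree at most $d$, and that hypothesis is what rules out such examples (indeed no $I$ generated in degree $\leq 1$ has the Hilbert function of $(x,y^3)$).

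The fix is to run the persistence engine on $I$, not on $L$, and transfer back via the shared Hilbert function. From $|I_{d+1}|=|\mathbf{m}_1 I_d|$ (no generators of $I$ above degree $d$), $|L_{d+1}|=|\mathbf{m}_1 L_d|$ (hypothesis), and $\HF(I)=\HF(L)$, one gets $|\mathbf{m}_1 I_d|=|\mathbf{m}_1 L_d|$, so $I_d$ is Gotzmann by Proposition~\ref{gotzByLexSeg}. Vector-space persistence then gives that $I_e=\mathbf{m}_1 I_{e-1}$ is Gotzmann for every $e\geq d$, hence $|\mathbf{m}_1 I_e|=|\mathbf{m}_1 L_e|$ for every $e\geq d$. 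Since $I$ has no generator in degree $e+1$ we also have $|L_{e+1}|=|I_{e+1}|=|\mathbf{m}_1 I_e|=|\mathbf{m}_1 L_e|$, which is exactly the statement that $L$ has no generator in degree $e+1$. This closes the induction. Your treatment of the ``in particular'' sentence is then fine as written, given the corrected first part.
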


The persistence theorem holds in both $S$ and $R$ \cite{MR1444495, MR879339}.  
\todo{
	Jeff: AHH (gotzmann theorems) cites Furedi-Griggs for the persistence
	theorem for simplicial complexes, then gives a (different?) proof
	over E.  Hunt that down. 
	Andrew: Did it. They're right. It's theorem 2.1 of FG. Look at example 2.4 of FG.
	They have a definition of a jumping number. We're concerned with the case (g=k-1).
	Their example shows that if you take a Gotzmann monomial vectorspace of dimension m
	with $m = \binom{a_k}{k} + \cdots + \binom{a_1}{1}$ then you can drop $a_1$ generators
	and still be Gotzmann. Perhaps we should list this as a Gotzmann construction; it
	will work componentwise as well.
        Jeff:  I totally believe that, and it's worth
        understanding, but we can't do anything with it at this time;
        it's out of place in this paper.
	}

\section{Gotzmann Squarefree Ideals of the Polynomial Ring}

We will classify the squarefree ideals of $S$ which are Gotzmann.  
To do this, we compare squarefree ideals with their squarefree 
lexifications and exploit the interaction
between $S$ and $R$.

In \cite{H2009}, Hoefel proved that a squarefree quadric ideal is
Gotzmann if and only if it is the edge ideal of a \emph{star-shaped}
graph.  We generalize this result as follows:  

\begin{definition}
Let $H$ be a pure $d$-dimensional hypergraph.  We say that $H$ is
\emph{star-shaped} if there exists a $(d-1)$-simplex which is contained in
every edge of $H$.  More generally, we say that a $d$-dimensional
simplicial complex $\Delta$ is a \emph{supernova} if there exists a 
chain of faces $\varnothing\subset F_{0}\subset F_1 \subset \cdots \subset F_{d-1}$ such that every $i$-dimensional 
facet of $\Delta$ contains the $(i-1)$-dimensional face $F_{i-1}$.
\end{definition}

We show in Theorem \ref{classify} that a squarefree ideal is Gotzmann
if and only if it is the edge ideal of a supernova.  In particular, a
purely $d$-generated squarefree ideal is 
Gotzmann if and only if it is the edge ideal of a $(d-1)$-dimensional
supernova.

\todo{Decide if ``supernova'' is a good word or not.  Other
  candidates:  ``flower'', ``generalized star-shaped''.}

A consequence of Theorem \ref{classify} is that all Gotzmann
squarefree ideals of $S=\field[x_{1},\dots, x_{n}]$ have at most $n$
generators.  The Gotzmann ideals of $S$ with at most $n$ generators
are classified by Murai and Hibi \cite{MR2434473}; it is clear from their
classification that any Gotzmann squarefree ideal with at most $n$
generators must have the form prescribed by Theorem \ref{classify}.
Thus, if this bound on the number of generators could be easily
proved, Theorem \ref{classify} would be a corollary of 
\cite[Theorem 1.1]{MR2434473}. We have been unable to find a simple proof of this bound.
Regardless, the smaller scope of our investigation allows a simpler
proof than that given in \cite{MR2434473}.

\begin{definition}
The \emph{squarefree lexification} of a squarefree ideal $I \subseteq S$ is the
squarefree lex ideal $L$ in $S$ with the same Hilbert function as $I$.
\end{definition}

The existence of squarefree lexifications follows from the following
construction: 
Let $J^{\sqf} \subseteq R$ be the lex ideal having the same Hilbert function
as $I^{\sqf}$.  
Then let $L$ be the ideal of $S$ with $L^{\sqf} =
J^{\sqf}$ (that is, $L$ is generated by the monomials of
$J^{\sqf}$). Then $L$ is squarefree lex and has
the same Hilbert function as $I$ because
\[ \HS_{I}(t)=\HS_{I^{\sqf}}(\frac{t}{1-t}) = \HS_{J^{\sqf}}(\frac{t}{1-t}) = \HS_{L}(t).
\]

The following proposition is a consequence of Kruskal and Katona's theorem:

\begin{proposition}[Aramova, Avramov, Herzog \cite{MR1603874}]\label{gotzThenSfGotz}
If $I \subseteq S$ is a Gotzmann squarefree ideal then $I^{\sqf}$ is
Gotzmann in $R$. 
\end{proposition}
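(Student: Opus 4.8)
The plan is to translate the Gotzmann hypothesis on $I\subseteq S$ into a statement about the squarefree Hilbert function and then compare it with the growth dictated by Kruskal--Katona in $R$. By Corollary~\ref{gotzByGenerators} it suffices to show that $I^{\sqf}$ and its lexification in $R$ have the same number of generators in each degree; equivalently, by Proposition~\ref{gotzByLexSeg}, that each component $I^{\sqf}_{d}$ has minimal $\mathbf{m}_{1}$-growth in $R$. So the real content is: if $I_{d}$ has minimal growth in $S$, then $I^{\sqf}_{d}$ has minimal growth in $R$.

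First I would set up the bookkeeping relating growth in $S$ to growth in $R$. Let $L$ be the squarefree lexification of $I$, whose existence was just established, and recall $L^{\sqf}=J^{\sqf}$ is the lex ideal of $R$ with the same Hilbert function as $I^{\sqf}$. The key identity is $\HS_{I}(t)=\HS_{I^{\sqf}}(\tfrac{t}{1-t})$, which expresses $|I_{d}|$ as an explicit nonnegative combination of the numbers $|I^{\sqf}_{e}|$ for $e\le d$; the same identity holds for $L$ and $L^{\sqf}=J^{\sqf}$. Since $I$ and $L$ have the same Hilbert function by construction, this gives no new information by itself — the point is to use it together with the \emph{inequalities} coming from Kruskal--Katona applied to $I^{\sqf}$ versus $J^{\sqf}$. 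Namely, for each $e$, Kruskal--Katona (Proposition~\ref{gotzByLexSeg} in $R$) gives $|\mathbf{m}_{1}I^{\sqf}_{e-1}|\ge|\mathbf{m}_{1}J^{\sqf}_{e-1}|$, hence the number of degree-$e$ generators of $I^{\sqf}$ is at least that of $J^{\sqf}$; equivalently $|I^{\sqf}_{e}|-|\mathbf{m}_{1}I^{\sqf}_{e-1}|\ge|J^{\sqf}_{e}|-|\mathbf{m}_{1}J^{\sqf}_{e-1}|$, with equality in degree $e$ iff $I^{\sqf}_{e-1}$ is Gotzmann in $R$.

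Next I would run the comparison degree by degree. Suppose for contradiction that $d$ is the smallest degree in which $I^{\sqf}_{d-1}$ fails to be Gotzmann in $R$, so $I^{\sqf}$ has strictly more degree-$d$ generators than $J^{\sqf}$, while in all lower degrees the generator counts agree and hence the squarefree Hilbert functions of $I$ and $L$ agree through degree $d-1$. Plugging into the transfer identity $\HS_{I}(t)=\HS_{I^{\sqf}}(\tfrac{t}{1-t})$ and the analogous one for $L$, the extra squarefree generators of $I^{\sqf}$ in degree $d$ force $|I_{d}|>|L_{d}|$ (the coefficient of $t^{d}$ in $\tfrac{t^{d}}{(1-t)^{d}}$ relative to lower-degree contributions is $1$), contradicting $\HF(I)=\HF(L)$. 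I expect this to be the delicate step: one must be careful that passing through the substitution $t\mapsto t/(1-t)$ does not let a deficit in some other degree cancel the surplus, which is exactly where minimality of $d$ and the sign-coherence of the binomial coefficients in $(1-t)^{-e}$ are used — a discrepancy first appearing in degree $d$ cannot be masked by degree-$d$ contributions from lower squarefree degrees because those have already been matched.

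The main obstacle, then, is making the ``first discrepancy propagates'' argument airtight: carefully ordering the induction on $d$, and verifying that the linear change of variables relating $\HS_{I}$ and $\HS_{I^{\sqf}}$ is, degree by degree, a unipotent upper-triangular transformation with the lowest-order term contributing with coefficient $1$, so that equality of Hilbert functions in $S$ through degree $d$ together with equality of squarefree Hilbert functions through degree $d-1$ forces equality of squarefree Hilbert functions in degree $d$ as well. Once that is in place, induction on $d$ shows every $I^{\sqf}_{d}$ is Gotzmann in $R$, i.e. $I^{\sqf}$ is Gotzmann, completing the proof. (Alternatively, one can cite the compression/shifting machinery of Aramova--Avramov--Herzog directly, but the self-contained argument above only uses Kruskal--Katona and the Hilbert series identity already recorded in this section.)
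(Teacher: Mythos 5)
There is a genuine gap here, and it starts with an inequality that is reversed. Since $J^{\sqf}$ is lex in $R$, Kruskal--Katona gives $|\mathbf{m}_{1}I^{\sqf}_{e-1}|\ge|\mathbf{m}_{1}J^{\sqf}_{e-1}|$; combined with $|I^{\sqf}_{e}|=|J^{\sqf}_{e}|$ (equal by construction of $J^{\sqf}$), this yields that $I^{\sqf}$ has \emph{at most}, not at least, as many degree-$e$ generators as $J^{\sqf}$. Lex ideals minimize growth and therefore maximize generator counts. The ``first discrepancy propagates'' step is also circular: you want to derive a contradiction to $\HF(I)=\HF(L)$, but both $|I^{\sqf}_{e}|=|L^{\sqf}_{e}|$ for all $e$ (by the very definition of the squarefree lexification) and, pushing through $\HS_{I}(t)=\HS_{I^{\sqf}}\bigl(\tfrac{t}{1-t}\bigr)$, $|I_{d}|=|L_{d}|$ for all $d$. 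So there is nothing for the substitution $t\mapsto t/(1-t)$ to detect: the quantity that differs between $I^{\sqf}$ and $J^{\sqf}$ is the generator count $|I^{\sqf}_{d}|-|\mathbf{m}_{1}I^{\sqf}_{d-1}|$, which does not appear anywhere in the Hilbert series identity, so no contradiction arises from it.

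The missing idea is that a squarefree monomial ideal and its image in $R$ have literally the same canonical generating set, so $\beta_{0,d}(I)$ counted in $S$ equals $\beta_{0,d}(I^{\sqf})$ counted in $R$, and likewise for $L$. You do \emph{not} need to pass through the Hilbert series at all. Since $I$ is Gotzmann in $S$ and $L$ has the same $S$-Hilbert function, $|\mathbf{m}_{1}I_{d-1}|\le|\mathbf{m}_{1}L_{d-1}|$, i.e.\ $\beta_{0,d}(I)\ge\beta_{0,d}(L)$. Since $L^{\sqf}$ is lex in $R$ with the same $R$-Hilbert function as $I^{\sqf}$, $|\mathbf{m}_{1}L^{\sqf}_{d-1}|\le|\mathbf{m}_{1}I^{\sqf}_{d-1}|$, i.e.\ $\beta_{0,d}(I^{\sqf})\le\beta_{0,d}(L^{\sqf})$. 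Identifying $\beta_{0,d}(I)=\beta_{0,d}(I^{\sqf})$ and $\beta_{0,d}(L)=\beta_{0,d}(L^{\sqf})$ forces all four to be equal, so $|\mathbf{m}_{1}I^{\sqf}_{d-1}|=|\mathbf{m}_{1}L^{\sqf}_{d-1}|$ and $I^{\sqf}_{d-1}$ is Gotzmann by Proposition~\ref{gotzByLexSeg}. This is the two-sided squeeze (Gotzmann in $S$ pushes generators up, lex in $R$ pushes them down) that makes the proof go, and it is exactly what your draft was circling around but never pinned down.
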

\todo{Hunt down this reference.  I couldn't find it quickly.  If it
  doesn't exist, the proof is as follows:  Let $L^{\sqf}$ be the
  squarefree lexification, and look at growth from degree $d$ to
  $d+1$.  Then $L^{\sqf}$ grows more slowly in $R$, since it's lex,
  while $I$ grows more slowly in $S$, since it's Gotzmann.  Thus, they
  grow at the same speed by the $h$-vector thing.

  It's proposition 2.3 of ``resolutions of monomial ideals and cohomology over exterior algebras.''
}

\begin{lemma} \label{sqflexIsGotz}
If $I \subseteq S$ is a Gotzmann squarefree ideal then its squarefree
lexification $L$ is Gotzmann. 
\begin{proof}
By Proposition \ref{gotzThenSfGotz}, $I^{\sqf}$ is Gotzmann in $R$.
Thus, applying Corollary \ref{gotzByGenerators}, $I^{\sqf}$ and
$L^{\sqf}$ have the same number of minimal generators in 
every degree.  Now $I$ and $I^{\sqf}$ have the same generating set, as
do $L$ and $L^{\sqf}$, so $I$ and $L$ have the same number of
generators in every degree.  Applying Corollary \ref{gotzByGenerators}
again, $L$ must be Gotzmann in $S$.  
\idiot{This is the old proof:
\begin{proof}
It will be shown that $|\mathbf{m}_1 I_d| =|\mathbf{m}_1 L_d|$ for an arbitrary degree $d$.

Let $J$ and $K$ be the square free monomial ideals generated 
by the generators of $I$ and $L$ respectively that have 
degrees less than or equal to $d$.
Then $J_{d+1} = \mathbf{m}_1 I_d$ and $K_{d+1} = \mathbf{m}_1 L_d$. 

It will now be shown that $JR$ and $KR$ have the same Hilbert function as ideals in $R$. In degrees $k \leq d$, $|(JR)_k| = |(IR)_k| = |(LR)_k| = |(KR)_k|$ by
the construction of $L$. As $J_d$ is Gotzmann, $(JR)_d$ is Gotzmann using the previous proposition.
Also $(KR)_d$ is Gotzmann as it is a lex segment. Since $JR$ is generated in
degrees $d$ and less, Gotzmann's persistence theorem shows that $|(JR)_k| = |(KR)_k|$ for $k > d$.

Finally, $|J_{d+1}| = |\mathbf{m}_1 I_d|$ and $|K_{d+1}| = |\mathbf{m}_1 L_d|$ can be determined by the Hilbert series of $JR$ and $KR$ respectively. However, since these these two Hilbert series are equal, we have $|\mathbf{m}_1 L_d| = |\mathbf{m}_1 I_d|$.
\end{proof}
}
\end{proof}
\end{lemma}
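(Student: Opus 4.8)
The plan is to reduce the statement to a generator-counting argument, applying Corollary~\ref{gotzByGenerators} twice --- once in $R$ and once in $S$ --- with Proposition~\ref{gotzThenSfGotz} as the bridge between the two rings. The key underlying observation is that passing between a squarefree ideal of $S$ and the corresponding ideal of $R$ does not change the monomial generating set, so that minimal generator counts in each degree are preserved in both directions.

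First I would invoke Proposition~\ref{gotzThenSfGotz} to conclude that $I^{\sqf}$ is Gotzmann in $R$. Next, by the construction of the squarefree lexification, $L^{\sqf}$ is precisely the lex ideal of $R$ with the same Hilbert function as $I^{\sqf}$; so Corollary~\ref{gotzByGenerators}, applied in $R$, tells us that $I^{\sqf}$ and $L^{\sqf}$ have the same number of minimal generators in every degree. I would then transfer this back to $S$: since $I$ is squarefree, $\gens(I)=\gens(I^{\sqf})$ (a squarefree monomial of $S$ is a minimal generator of $I$ exactly when its image is a minimal generator of $IR$, since divisibility among squarefree monomials is unaffected by the relations $x_i^2=0$), and likewise $\gens(L)=\gens(L^{\sqf})$. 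Hence $I$ and $L$ have the same number of generators in every degree.

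Finally I would let $L'$ denote the honest lex ideal of $S$ with the same Hilbert function as $I$; since $L$ and $I$ share a Hilbert function, $L'$ is also the lexification of $L$. Because $I$ is Gotzmann, Corollary~\ref{gotzByGenerators} gives that $I$ and $L'$ have matching generator counts in each degree; combining with the previous step, $L$ and $L'$ have matching generator counts in each degree, so Corollary~\ref{gotzByGenerators} --- applied now in $S$ --- shows that $L$ is Gotzmann.

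I expect the only delicate point to be the bookkeeping around which ``lex ideal'' is meant at each stage: $L$ is squarefree lex, \emph{not} lex, so one must be careful that the ideal $L'$ in the final application of Corollary~\ref{gotzByGenerators} is the genuine lexification of $L$ in $S$ (which it is, precisely because $L$ and $I$ have the same Hilbert function). An alternative, more computational route would bypass this concern by showing directly that $|\mathbf{m}_1 I_d| = |\mathbf{m}_1 L_d|$ for every $d$: truncate $I$ and $L$ at their generators of degree $\le d$, pass to $R$, observe that these truncations have equal Hilbert functions in degrees $\le d$ (by the definition of $L$) and Gotzmann $d$-th components (by Proposition~\ref{gotzThenSfGotz} together with lexness), apply Gotzmann persistence in $R$ to force equality of Hilbert functions in all degrees, and then read off $|\mathbf{m}_1 I_d|$ and $|\mathbf{m}_1 L_d|$ from these Hilbert series via the $\frac{t}{1-t}$ substitution. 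The generator-counting proof is cleaner, so that is the one I would present.
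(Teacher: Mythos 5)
Your proof is correct and is essentially the paper's argument: Proposition~\ref{gotzThenSfGotz} to get $I^{\sqf}$ Gotzmann in $R$, Corollary~\ref{gotzByGenerators} in $R$ to match generator counts of $I^{\sqf}$ and $L^{\sqf}$, transfer back to $S$ via $\gens(I)=\gens(I^{\sqf})$ and $\gens(L)=\gens(L^{\sqf})$, then Corollary~\ref{gotzByGenerators} again in $S$. The extra bookkeeping step you flag (introducing the honest lex ideal $L'$ of $S$ and arguing through transitivity) is exactly what the paper's final ``applying Corollary~\ref{gotzByGenerators} again'' elides, so you have simply spelled out the same argument a bit more carefully.
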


\idiot{
\todo{
Make Jeff double check that the following didn't require $I$ Gotzmann.
}It doesn't.}

The next general lemma is the squarefree version of a common inductive
technique for studying Hilbert functions in terms of lex ideals.  
\todo{Determine if this prose is appropriate.  There are similar
  looking lemmas in [Me, lexlike sequences] and, if I read it right on
  skimming, [MH].
}

\begin{lemma}\label{inductOnNumVars}
 Let $I \subseteq S$ be a squarefree ideal and let $L$ be its squarefree
lexification. Then $L \subseteq (x_{1})$ if and only if $I \subseteq (x_{i})$ for some variable $x_{i}$.
\end{lemma}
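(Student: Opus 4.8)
The plan is to reduce everything to a comparison of squarefree Hilbert functions. First I would record the one fact that links $I$ and $L$: by construction of the squarefree lexification (equivalently, by the $\HS_I(t) = \HS_I^{\sqf}(\tfrac{t}{1-t})$ substitution, which is invertible), $I$ and $L$ have the same squarefree Hilbert function, i.e. $|I^{\sqf}_d| = |L^{\sqf}_d|$ for every $d$. The proof then splits into the two implications, and in both the crucial elementary observation is that among the $\binom{n}{d}$ squarefree monomials of degree $d$, the $\binom{n-1}{d-1}$ lex-largest ones are exactly those divisible by $x_1$ (since any monomial containing $x_1$ precedes any monomial avoiding it).

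For the implication $I \subseteq (x_i) \Rightarrow L \subseteq (x_1)$: if every generator of $I$ is divisible by $x_i$, then every squarefree degree-$d$ monomial of $I$ is divisible by $x_i$, so $|I^{\sqf}_d| \le \binom{n-1}{d-1}$, hence $|L^{\sqf}_d| \le \binom{n-1}{d-1}$. Since $L^{\sqf}$ is lex in $R$, the space $L^{\sqf}_d$ is an initial segment of the squarefree degree-$d$ monomials in lex order, and being of size at most $\binom{n-1}{d-1}$ it consists entirely of monomials divisible by $x_1$. As this holds in every degree, $L^{\sqf} \subseteq (x_1)R$, which says exactly $L \subseteq (x_1)$.

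For the converse $L \subseteq (x_1) \Rightarrow I \subseteq (x_i)$ for some variable $x_i$, I would only need to look in degree $n-1$. From $L \subseteq (x_1)$ we get $|I^{\sqf}_{n-1}| = |L^{\sqf}_{n-1}| \le \binom{n-1}{n-2} = n-1$, which is strictly less than $\binom{n}{n-1} = n$, the total number of squarefree monomials of degree $n-1$. Hence some such monomial $u = x_1\cdots\widehat{x_i}\cdots x_n$ is not in $I$. Every squarefree monomial in the variables $\{x_j : j\neq i\}$ divides $u$, so it too lies outside $I$; in particular no minimal generator of $I$ can avoid $x_i$, and therefore $I \subseteq (x_i)$.

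There is no serious obstacle; the only thing to be careful about is that $L$ lives in $S$ while the lex-segment bookkeeping is cleanest in $R$, so the first implication should be phrased through $L^{\sqf}$ and then transferred back using that $L$ and $L^{\sqf}$ share the same (squarefree) generating set. One might also add the remark that the hypothesis $L\subseteq(x_1)$ actually forces the dimension bound $|I^{\sqf}_d|\le\binom{n-1}{d-1}$ in \emph{all} degrees, though only the top degree is needed for the conclusion.
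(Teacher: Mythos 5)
Your proof is correct and follows essentially the same route as the paper's: both directions rest on the observation that $I$ and $L$ share the same squarefree Hilbert function, that the lex-first $\binom{n-1}{d-1}$ squarefree monomials of degree $d$ are precisely those divisible by $x_1$, and (for the converse) that looking in degree $n-1$ produces a missing squarefree monomial $u = x_1\cdots\widehat{x_i}\cdots x_n$ whose divisors must all avoid $I$.
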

\begin{proof}
If $I \subseteq (x_i)$ then $I^{\sqf} \subseteq (x_i)^{\sqf}$ and hence
\[ |L_d^{\sqf}| = |I_d^{\sqf}| \leq |(x_i)_d ^{\sqf}| = |(x_1)_d^{\sqf}|.
\]
As $(x_1)^{\sqf}_d$ is a lex segment in $R$, we have $L_d^{\sqf} \subseteq (x_1)_d^{\sqf}$ 
and hence every generator of $L$ is divisible by $x_1$.

Conversely, assume that $L \subseteq (x_1)$. We have
$|L_{n-1}^{\sqf}|\leq n-1$, so there is at least one squarefree
monomial $m$ of degree $n-1$ which is not in $I$.  Write 
$m=\frac{x_{1}\cdots x_{n}}{x_{i}}$.  We claim that $I\subseteq
(x_{i})$.  Indeed, every squarefree monomial outside of $(x_{i})$ divides $m$, so
no such monomial can appear in $I^{\sqf}$.  Thus every generator of $I$
is in $(x_{i})$ and, in particular, $I\subseteq (x_{i})$.
\end{proof}

\begin{lemma}\label{linform}
If $I \subseteq S$ is a squarefree Gotzmann ideal then 
either $I \subseteq(x_{i})$ for some variable $x_i$ or $(x_{i}) \subseteq I$ 
for some variable $x_i$.
\begin{proof}
Suppose to the contrary that $I$ is Gotzmann but, for all $i$, $I \not
\subseteq (x_i)$ and $(x_i) \not \subseteq I$.  We will show that $L$,
the squarefree lexification of $I$,
is not Gotzmann, contradicting Lemma \ref{sqflexIsGotz}. 

It follows from Lemma \ref{inductOnNumVars} that $L\not\subseteq
(x_{1})$. Therefore we 
may choose a generator $m$ of $L$ which is not divisible by $x_1$. Let
$d$ be the degree of $m$. 

Since $I$ contains no variable, $L$ cannot contain $x_1$.  Choose a 
 squarefree monomial $m'\in (x_{1})\smallsetminus L$ of maximal degree
 $d'$.
As $L$ is squarefree lex and $m$ is not divisible by $x_1$,
$L$ contains all squarefree monomials that are divisible by $x_1$ and
have degree $d$ or larger. Thus, $d' < d$.

Let $T$ be the ideal generated by $\gens(L)\cup \{x_{1}^{d-d'}m'\}\setminus \{m\}$. Note that $|T_d| = |L_d|$.
\idiot{
The generators of $L$ are square-free and the support of $x_1^{d-d'}m'$ is
exactly the suppport of $m'$ which is not in $L$.
}

Let $A = \gens(\mathbf{m}_1 L_d)$ and $B = \gens(\mathbf{m}_1 T_d)$ be
the sets of degree $d+1$ monomials lying above $L_d$ and $T_d$
respectively.  If $L$ were Gotzmann, it would follow that $|A|\leq
|B|$.  We will show that instead $|B|<|A|$.

We claim that $B\smallsetminus A= \{x_{1}^{d-d'}m'x_{i} : x_{i} \mbox{
  divides }m'\}$.
Indeed, let $\mu\in B\smallsetminus A$ be a monomial.  Then $\mu$ is
divisible by 
$x_{1}^{d-d'}m'$, so it has the form $x_1^{d-d'} m' x_i$ for some $i$.
If $x_i$ divides $m'$ then the support of $x_1^{d-d'} m' x_i$ is $m'$
and hence $\mu$ is not in $A$. 
On the other hand,
if $x_i$ does not divide $m'$, then $m' x_i$ is a squarefree monomial
of degree $d' +1$ which is divisible by $x_1$. By the choice of $m'$,
we have $m' x_i \in L$ and hence $x_1^{d-d'} m' x_i \in A$, proving
the claim.
In particular, $|B\smallsetminus A|=d'$.

Similarly, monomials in $A \smallsetminus B$ must have the form $x_i
m$ for some $i$. If $x_i$ divides $m$ then $x_i m$ has support $m$ and
hence is not in $B$. Thus  
\[ A \smallsetminus B \supseteq \{x_{i}m:x_{i} \mbox{ divides } m\}
\] 
which has cardinality at least $d$. 

As $|B \smallsetminus A| = d' < d \leq |A \smallsetminus B|$, it follows that
$|\mathbf{m}_1 T_d| = |B| < |A| = |\mathbf{m}_1 L_d|$, and so $L$ is
not Gotzmann. 
\end{proof}
\end{lemma}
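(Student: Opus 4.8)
The plan is to prove the contrapositive: assuming that $I$ is contained in no $(x_i)$ and contains no $(x_i)$, I will exhibit a monomial ideal that beats $I$ in the Gotzmann inequality in a single degree. The first move is to replace $I$ by its squarefree lexification $L$: by Lemma \ref{sqflexIsGotz} it suffices to show $L$ is not Gotzmann, and by Lemma \ref{inductOnNumVars} the hypothesis $I\not\subseteq(x_i)$ for all $i$ gives $L\not\subseteq(x_1)$, so $L$ has a minimal generator $m$ of some degree $d$ with $x_1\nmid m$. Since $I$ contains no variable, neither does $L$, so $x_1\notin L$; let $m'$ be a squarefree monomial of largest degree $d'$ lying in $(x_1)\smallsetminus L$ (such an $m'$ exists, as $x_1$ itself qualifies). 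Because $L$ is squarefree lex and already contains the degree-$d$ monomial $m$, which is not divisible by $x_1$, it contains every degree-$d$ squarefree monomial divisible by $x_1$; hence $d'<d$.

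Now I would perturb $L$ only in degree $d$: set $T=(\gens(L)\cup\{x_1^{\,d-d'}m'\}\smallsetminus\{m\})$. A quick support computation shows that no squarefree generator of $L$ divides $x_1^{\,d-d'}m'$ (such a generator would have to divide $m'$), that $m$, being a minimal generator, really does leave degree $d$, and that the added and removed monomials are distinct, so $|T_d|=|L_d|$. The heart of the argument is then to compare $A:=\gens(\mathbf{m}_1 L_d)$ and $B:=\gens(\mathbf{m}_1 T_d)$. For $B\smallsetminus A$: a monomial $x_1^{\,d-d'}m'x_i$ has the same support as $m'$ when $x_i\mid m'$, so it lies over nothing in $L_d$; when $x_i\nmid m'$ the squarefree monomial $m'x_i$ has degree $d'+1\le d\le n$ and is divisible by $x_1$, so by maximality of $d'$ it lies in $L$, whence $x_1^{\,d-d'}m'x_i=x_1\cdot(x_1^{\,d-d'-1}m'x_i)\in\mathbf{m}_1 L_d$. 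Hence $B\smallsetminus A=\{x_1^{\,d-d'}m'x_i : x_i\mid m'\}$, a set of exactly $d'$ elements. For $A\smallsetminus B$: whenever $x_i\mid m$, the monomial $x_im$ lies in $A$ by definition of $\mathbf{m}_1 L_d$; it is not in $B$ because it cannot arise from a surviving generator of $L$ (a support computation plus the minimality of $m$) nor from $x_1^{\,d-d'}m'$ (which is divisible by $x_1$, while $x_im$ is not). Since $m$ is squarefree of degree $d$, this gives $|A\smallsetminus B|\ge d$.

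Putting the counts together, $|\mathbf{m}_1 T_d|=|A|-|A\smallsetminus B|+|B\smallsetminus A|\le |A|-d+d'<|A|=|\mathbf{m}_1 L_d|$, so $L_d$ is not Gotzmann, hence $L$ is not Gotzmann, contradicting Lemma \ref{sqflexIsGotz}. I expect the main obstacle to be the second paragraph: correctly identifying the symmetric difference of the two ``shadow'' sets $A$ and $B$, which rests entirely on tracking supports to separate the many squarefree generators of $L$ from the lone non-squarefree generator $x_1^{\,d-d'}m'$ that we introduce, together with the extremal choice of $d'$.
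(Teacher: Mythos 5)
Your proposal is correct and follows essentially the same approach as the paper's own proof: pass to the squarefree lexification $L$ via Lemma~\ref{sqflexIsGotz}, deduce $L\not\subseteq(x_1)$ from Lemma~\ref{inductOnNumVars}, choose the generator $m$ and the extremal monomial $m'$, form the perturbed ideal $T$ in the same way, and compare the shadows $A$ and $B$ by a support analysis to get $|B|-|A|=d'-d<0$. The only difference is cosmetic: you spell out a couple of details (why $|T_d|=|L_d|$, why $x_im\notin B$) slightly more explicitly than the paper does, and those details are correct.
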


\begin{lemma} Let $I \subseteq S$ be a Gotzmann squarefree monomial
  ideal with $I \subseteq (x_i)$. 
Then $\frac{1}{x_i} I$ is Gotzmann in $S$.
\begin{proof}
Let $L$ be the (non-squarefree) lexification of $I$.
It is clear that $L \subseteq (x_1)$:  $(x_{1})$ is the lexification
of $(x_{i})$, which contains $I$.  

Now multiplication by $x_{i}$ is a degree one module isomorphism from
$\frac{1}{x_{i}}I$ to $I$, and similarly for $L$.  Applying Corollary
\ref{gotzByGenerators} twice, we have that $\frac{1}{x_{i}}I$ is
Gotzmann with the same Hilbert function as $\frac{1}{x_{1}}L$. 
\end{proof}
\end{lemma}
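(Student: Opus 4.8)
The plan is to reduce everything to Corollary \ref{gotzByGenerators}, which characterizes the Gotzmann property in terms of the number of minimal generators. First I would introduce $L$, the ordinary (not squarefree) lexification of $I$, so that $\HF_L=\HF_I$; since $I$ is Gotzmann, Corollary \ref{gotzByGenerators} tells us $I$ and $L$ have the same number of minimal generators in every degree. The goal is to transport this statement along division by $x_i$.

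The key structural point is that $L\subseteq(x_1)$. Because $I\subseteq(x_i)$ we have $\HF_I(d)\leq\HF_{(x_i)}(d)$ for all $d$; lexification preserves Hilbert functions and $\HF_{(x_i)}=\HF_{(x_1)}$, so $\HF_L(d)\leq\HF_{(x_1)}(d)$. As both $L_d$ and $(x_1)_d$ are lex segments of $S_d$, the one of smaller dimension lies inside the one of larger dimension, whence $L\subseteq(x_1)$. Consequently $\tfrac{1}{x_1}L$ is defined, and I would check that it is again a lex ideal: the map $m\mapsto x_1 m$ from degree-$d$ monomials onto the monomials of $(x_1)_{d+1}$ is order-preserving for the lex order, so dividing the lex segment $L_{d+1}\subseteq(x_1)_{d+1}$ by $x_1$ yields a lex segment in $S_d$.

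Now multiplication by $x_i$ is a degree-one isomorphism of graded $S$-modules from $\tfrac{1}{x_i}I$ onto $I$, and likewise multiplication by $x_1$ identifies $\tfrac{1}{x_1}L$ with $L$. Since $I\subseteq(x_i)$, every minimal generator of $I$ is divisible by $x_i$, so these isomorphisms carry minimal generating sets to minimal generating sets with a degree shift of one: the number of degree-$d$ generators of $\tfrac{1}{x_i}I$ equals the number of degree-$(d+1)$ generators of $I$, and similarly for $L$. They also have equal Hilbert functions, since $\HF_{\frac{1}{x_i}I}(d)=\HF_I(d+1)=\HF_L(d+1)=\HF_{\frac{1}{x_1}L}(d)$; as $\tfrac{1}{x_1}L$ is lex, it is therefore the lexification of $\tfrac{1}{x_i}I$. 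Combining the generator counts, $\tfrac{1}{x_i}I$ and its lexification $\tfrac{1}{x_1}L$ have the same number of generators in each degree (because $I$ and $L$ do), so Corollary \ref{gotzByGenerators} applied once more shows $\tfrac{1}{x_i}I$ is Gotzmann.

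The only genuinely delicate steps are establishing $L\subseteq(x_1)$ and that $\tfrac{1}{x_1}L$ is lex — that is, the compatibility of lexification with division by the linear form; I expect this to be the main obstacle, although it is ultimately routine once one notes that lex segments contained in $(x_1)$ correspond bijectively and order-preservingly to lex segments one degree lower. Everything else is the formal bookkeeping of a degree-shift module isomorphism.
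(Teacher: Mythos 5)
Your proof is correct and follows the same route as the paper: introduce the (non-squarefree) lexification $L$, observe $L\subseteq(x_1)$, use the degree-one module isomorphisms given by multiplication by $x_i$ and $x_1$ to shift minimal generator counts and Hilbert functions down by one degree, and then apply Corollary~\ref{gotzByGenerators} once to pass from $I$ to generator counts and once more to conclude $\frac{1}{x_i}I$ is Gotzmann. You simply spell out the steps the paper leaves implicit (that $\frac{1}{x_1}L$ is again lex, and that generator counts shift correctly because every minimal generator of $I$ is divisible by $x_i$), which is exactly the intended bookkeeping.
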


\begin{lemma}
Let $I \subseteq S$ be a Gotzmann squarefree monomial ideal with $(x_i) \subseteq I$.
The image of $I$ in the quotient ring $S/(x_i)$ is a Gotzmann squarefree monomial ideal.
\begin{proof}
By renaming the variables if necessary, we may assume that $(x_1) \subseteq I$.
Let $\bar I$ be the image of $I$ in $S/(x_1)$ (or, equivalently,
the squarefree monomial ideal of $\field[x_2, \ldots, x_n]$ generated by
every generator of $I$ other than $x_1$).

Let $L$ be the (non-squarefree) lexification of $I$ in $S$.
We have $(x_1) \subseteq L$. Let $\bar L$ be the image of $L$ in $S/(x_1)$. 
Then $\bar L$ is the lexification of $\bar I$. 
Observe that $\gens(\bar{I})=\gens(I)\smallsetminus\{x_{1}\}$ and
similarly for $L$.  Thus, applying Corollary \ref{gotzByGenerators}
twice, $\bar{I}$ is Gotzmann.
\end{proof}
\end{lemma}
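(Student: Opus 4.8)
The plan is to apply the generator-counting criterion of Corollary \ref{gotzByGenerators} twice: once to convert the hypothesis that $I$ is Gotzmann into a statement comparing the generators of $I$ with those of its lexification $L$ in $S$, and once more, after passing to the quotient, to conclude that $\bar I$ is Gotzmann. After relabelling we may assume $i=1$, so $S/(x_1)\cong\field[x_2,\dots,x_n]$ and $\bar I$ is the squarefree monomial ideal with $\gens(\bar I)=\gens(I)\smallsetminus\{x_1\}$ (squarefreeness and minimality force $x_1$ to be the only generator of $I$ divisible by $x_1$, so nothing collapses). Write $L$ for the lexification of $I$ in $S$ and $\bar L$ for the image of $L$ in $S/(x_1)$.

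First I would record three facts about $L$ and $\bar L$, none of which uses that $I$ is Gotzmann. Since $(x_1)\subseteq I$ we have $\HF_{(x_1)}(d)\le\HF_I(d)=\HF_L(d)$ for all $d$, and as $(x_1)_d$ and $L_d$ are both lex segments the larger contains the smaller; hence $(x_1)\subseteq L$. Next, $\bar L$ is lex in $\field[x_2,\dots,x_n]$: because $x_1\in L$, the component $L_d$ contains every degree-$d$ monomial divisible by $x_1$, and these are exactly the lex-largest degree-$d$ monomials --- the least of them, $x_1x_n^{d-1}$, still exceeds $x_2^d$, the greatest degree-$d$ monomial avoiding $x_1$ --- so $L_d$ is the union of the $x_1$-multiples with an initial lex segment of the monomials avoiding $x_1$; the quotient map kills the former and carries the latter to an initial lex segment of $\field[x_2,\dots,x_n]_d$. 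Finally, since $(x_1)\subseteq I$ and $(x_1)\subseteq L$, the vector-space identity $\bar I_d\cong I_d/(x_1)_d$ (and likewise for $L$) gives $\HF_{\bar I}=\HF_I-\HF_{(x_1)}=\HF_L-\HF_{(x_1)}=\HF_{\bar L}$, so $\bar L$ is the lexification of $\bar I$.

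Now I would bring in the Gotzmann hypothesis. By Corollary \ref{gotzByGenerators}, $I$ and $L$ have the same number of generators in each degree. Passing to $\bar I$ and $\bar L$ simply deletes the generator $x_1$ from each generating set (again, minimality rules out any other generator of $I$, or of the lex ideal $L$, being divisible by $x_1$), so $\bar I$ and $\bar L$ have the same number of generators in each degree. Since $\bar L$ is the lexification of $\bar I$, a second application of Corollary \ref{gotzByGenerators} shows that $\bar I$ is Gotzmann in $S/(x_1)$. The only step requiring more than bookkeeping is the claim that $\bar L$ is lex, which rests on the elementary observation that, in a fixed degree, the monomials divisible by $x_1$ form an initial segment in the lex order; everything else follows formally from Corollary \ref{gotzByGenerators} and the uniqueness of lexifications.
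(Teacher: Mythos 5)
Your proposal is correct and follows exactly the same route as the paper: reduce to $i=1$, pass to the lexification $L$ in $S$, observe $(x_1)\subseteq L$, check that $\bar L$ is the lexification of $\bar I$ and that both quotients delete precisely the generator $x_1$, then apply Corollary \ref{gotzByGenerators} twice. The only difference is that you spell out the justifications (that $(x_1)\subseteq L$ by comparing lex segments, that $\bar L$ is lex because the $x_1$-multiples form an initial lex segment in each degree, and that minimality forces $x_1$ to be the unique generator divisible by $x_1$) which the paper leaves implicit.
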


Lemma \ref{linform} allows us to characterize the squarefree ideals
which are Gotzmann.

\begin{theorem}\label{classify}
Suppose $I \subseteq S$ is a squarefree ideal.  Then $I$ is Gotzmann
if and only if 
\[ 
I = 
	m_{1}(x_{i_{1,1}},\ldots,x_{i_{1,r_{1}}}) + 
	m_{1}m_{2}(x_{i_{2,1}},\ldots,x_{i_{2,r_{2}}}) 
	+ \cdots +  
	m_{1} \cdots m_{s}
	(x_{i_{s,1}},\ldots,x_{i_{s,r_{s}}})
\]
for some squarefree monomials $m_{1},\dots, m_{s}$ and 
variables $x_{i,j}$ all having pairwise disjoint support.
\begin{proof}
\idiot{
Suppose that $I$ has the given form.  Then $I$ is a lexlike ideal (see
\cite{MR2253664}).
Suppose that $I$ has the given form.  Then $I$ is the polarization of
a universally lex ideal.  (See \cite{Fa} for polarization and
\cite{MH} for universally lex ideals.)
Suppose that $I$ has the given form.  Then $I$ is a Gotzmann ideal of
type mumble (see \cite{MH} or \cite{Go}).

Now
}
Suppose that $I$ is Gotzmann.  By Lemma \ref{linform}, either $(x_j) \subseteq I$ or $I \subseteq (x_j)$ for some $j$. 

If $I \subseteq (x_j)$ then $\frac{1}{x_{j}}I$ is Gotzmann in $S$ and its
generators are supported on $\{x_1, \ldots, \hat x_j, \ldots, x_n\}$.
Inducting on the number of variables, $\frac{1}{x_j} I$ may be written as
\[
	m_{1}(x_{i_{1,1}},\ldots,x_{i_{1,r_{1}}}) + 
	m_{1}m_{2}(x_{i_{2,1}},\ldots,x_{i_{2,r_{2}}}) 
	+ \cdots +  
	m_{1} \cdots m_{s}
	(x_{i_{s,1}},\ldots,x_{i_{s,r_{s}}})
\]
where $x_j$ does not appear in this expression.
Thus, $I$ can be expressed in the desired form
by replacing $m_1$ with $x_j m_1$.

Alternatively, suppose that $(x_j) \subseteq I$, so,
without loss of generality, $I=(x_j)+J$, where $J$ is Gotzmann in the 
ring $\field[x_1,\dots, \hat x_j, \ldots, x_{n}]$. By induction on
the number of variables, $J$ may be written in the desired form
and so $I = (x_j) + J$ has the desired form as well (with $m_1 = 1$).
\end{proof}
\end{theorem}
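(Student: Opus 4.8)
The plan is to prove the two directions separately, with the forward direction (Gotzmann $\Rightarrow$ the stated form) being an induction on $n$, the number of variables, using Lemma \ref{linform} as the case split, and the three lemmas immediately preceding the theorem to reduce the number of variables in each case.

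For the forward direction, suppose $I$ is Gotzmann. The base case $n = 1$ is trivial ($I$ is $0$ or $(x_1)$). For the inductive step, Lemma \ref{linform} gives us two (non-exclusive) cases. First, if $I \subseteq (x_j)$ for some $j$, then the penultimate lemma tells us $\frac{1}{x_j} I$ is Gotzmann in $S$, and its generators only involve the variables other than $x_j$, so we may regard it as a Gotzmann squarefree ideal in $\field[x_1, \ldots, \hat x_j, \ldots, x_n]$, a polynomial ring in $n-1$ variables. By induction it has the stated form with some squarefree monomials $m_1, \ldots, m_s$ none of which involves $x_j$; multiplying through by $x_j$ (i.e.\ replacing $m_1$ by $x_j m_1$) exhibits $I$ in the desired form, and disjointness of supports is preserved since $x_j$ was a new variable. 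Second, if $(x_j) \subseteq I$, then (after renaming) $I = (x_j) + J$ where $J$ is generated by the generators of $I$ other than $x_j$; the last lemma shows $J$ (the image of $I$ in $S/(x_j)$) is Gotzmann in the $(n-1)$-variable ring $\field[x_1,\ldots,\hat x_j,\ldots,x_n]$, so by induction $J$ has the stated form, and then $I = (x_j) + J$ has it too with $m_1 = 1$ and $x_j$ adjoined as one of the new linear-form generators of the first summand.

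For the reverse direction, suppose $I$ has the stated form. One clean approach is to run the same induction backwards, peeling off the first summand: if $m_1 = 1$, then $I = (x_{i_{1,1}}, \ldots, x_{i_{1,r_1}}) + J$ where $J$ lives in fewer variables and already has the stated form, so one invokes the Gotzmann-ness of $J$ (by induction) together with a construction lemma that adjoining a variable to a Gotzmann ideal keeps it Gotzmann; if $m_1 \neq 1$, pick any variable $x_k \mid m_1$, so $I \subseteq (x_k)$ and $\frac{1}{x_k} I$ again has the stated form in fewer variables, hence is Gotzmann by induction, and multiplying a Gotzmann ideal by a variable keeps it Gotzmann (again via Corollary \ref{gotzByGenerators}, exactly as in the penultimate lemma above, run in reverse). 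Alternatively — and perhaps more honestly given the parenthetical remarks the authors left in the source — one observes that an ideal of this form is precisely a squarefree lexlike ideal (or: the polarization of a universally lex ideal), and cites the relevant known result; but the self-contained inductive argument is preferable here.

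The main obstacle is the reverse direction: the forward direction is essentially bookkeeping once Lemma \ref{linform} and its three companion lemmas are in hand, but for the converse one needs a genuine input asserting that ideals built up by repeatedly adjoining a colon-variable and multiplying by a variable remain Gotzmann. The cleanest route is to reduce everything, via $\HS_I(t) = \HS_I^{\sqf}(\frac{t}{1-t})$ and Corollary \ref{gotzByGenerators}, to showing that the squarefree lexification $L$ of such an $I$ has the same number of generators in each degree as $I$ — and in fact one can show directly that $L$ itself has the stated form (indeed, a squarefree lexification always does, by Lemma \ref{inductOnNumVars} applied inductively), so that it suffices to check that two ideals of this shape with equal Hilbert functions have equal generator counts degree by degree, which is a direct combinatorial computation on the data $(m_i, r_i)$.
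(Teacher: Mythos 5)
Your forward direction (Gotzmann $\Rightarrow$ stated form) is exactly the paper's argument: split into cases by Lemma \ref{linform}, reduce by dividing by $x_j$ or passing to $S/(x_j)$ via the two preceding lemmas, and induct on the number of variables. No differences there.

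The paper's printed proof in fact stops there: it never establishes the converse at all. (The authors appear to rely on Murai and Hibi's classification \cite{MR2434473}, cited in the introduction, or on the observation --- left only as a commented-out note in the source --- that ideals of this form are lexlike in the sense of \cite{MR2253664}.) You are right to notice that the converse needs arguing, and your first proposed route --- strengthening the two reduction lemmas to ``if and only if'' statements and running the induction backwards --- is the correct self-contained fix. Since both lemmas' proofs proceed via Corollary \ref{gotzByGenerators}, the converses come essentially for free; you would just need to record them explicitly (``if $\bar I$ is Gotzmann in $S/(x_i)$ then $(x_i)+\bar I$ is Gotzmann in $S$,'' and ``if $\frac{1}{x_i}I$ is Gotzmann then so is $I$'').

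Your third route, however, contains a genuine error. You assert parenthetically that ``a squarefree lexification always [has the stated form], by Lemma \ref{inductOnNumVars} applied inductively.'' This is false as stated: the ideal $I=(ab,ac,ad,bc)\subseteq\field[a,b,c,d]$ is squarefree lex (its image in $R$ is a lex ideal, so $I$ is its own squarefree lexification), yet it is not of the stated form, since its generators are not all linear and share no common variable --- and indeed it is not Gotzmann, as $|\mathbf{m}_1 I_2|=12$ while the lex ideal $(a^2,ab,ac,ad)$ has $|\mathbf{m}_1 L_2|=10$. Lemma \ref{inductOnNumVars} only launches your induction when $I\subseteq(x_i)$ for some $i$; nothing forces an arbitrary squarefree lex ideal to satisfy $I\subseteq(x_i)$ or $(x_i)\subseteq I$, which is exactly the content of Lemma \ref{linform} and is special to Gotzmann ideals. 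What is salvageable is the restricted claim that the squarefree lexification of an ideal \emph{already in the stated form} is again in the stated form, but proving that requires the same inductive work as your first route, so the detour buys you nothing.
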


\idiot{Hoefel shows in \cite{H2009} that a quadric squarefree ideal is Gotzmann
if and only if it is the edge ideal of a star-shaped graph.  We will
describe Gotzmann squarefree ideals in terms of the more general
\broom{} graphs.  \Broom{} graphs are studied in
\cite{Bouchat:2010fk}, where they are called broom graphs.  
We have changed the name so that ``broom graph'' may be reserved for
graphs where the bristles connect only to the bottom of the handle.

\todo{Find graph theory literature that already uses ``broom graph''.}

\begin{definition}
A graph $G=(V,E)$ is called a \emph{\broom{} graph} if the vertices can
be partitioned $V=Y\coprod X$ with $Y=\{y_{0},\dots, y_{s}\}$, and
every edge has one of the two forms $(y_{i},y_{i+1})$ or
$(x_{i},y_{j})$ with $x_{i}\in X$.  The subgraph on $Y$ is called the
\emph{handle}, and the edges $(x_{i},y_{j})$ are called the
\emph{bristles}.  In order that the handle be well-defined, we require
that there be at least one bristle of the form $(x_{i},y_{s})$.  We
say that $s$ is the \emph{height} of $G$.  
\end{definition}

\todo{an example would be good here.  Maybe I'll xfig one later.}

\begin{definition}
A \emph{labeled rooted \broom{} graph} is a \broom{} graph as above where
the vertex $y_{0}$ is called the \emph{root} and labeled by the
monomial $1$, and every other vertex is labeled by a distinct variable
of $S$.
\end{definition}

Given a broom graph, we associate a monomial to every path from the root to a
bristle by multiplying the labels of its vertices.  By Theorem
\ref{classify}, these monomials generate a Gotzmann ideal.  We
formulate this more precisely as follows.

\begin{definition}
Let $G=(V,E)$ be a rooted tree. The \emph{rooted path complex} of $G$ is the
simplicial complex 
\[
\mathcal{P}(G)=\{S\subseteq V \smallsetminus \{y_0\}:\text{there is a rooted path in $G$ containing all $v\in S$}\},
\]
where a rooted path is any path starting from the root.
\end{definition}

Since \broom{} graphs are trees, it makes sense to consider their rooted path
complexes. We have the following:

\begin{corollary}
Let $I$ be a squarefree ideal.  Then $I$ is Gotzmann if and only if it
is the edge ideal of the rooted path complex of a labeled rooted \broom{}
graph.  The height of the graph is one less than the maximum degree of
a generator of $I$; that is, the height equals the regularity of $S/I$.
\end{corollary}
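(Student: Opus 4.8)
The plan is to read the corollary off from Theorem~\ref{classify} by translating the algebraic form of $I$ into the combinatorics of rooted paths in a \broom{} graph. The one substantive ingredient is a description of the facets of the rooted path complex, which I would isolate as a preliminary lemma: if $G$ is a labeled rooted \broom{} graph with handle $y_{0},\dots,y_{s}$ (so $y_{0}$ carries the label $1$ and $y_{j}$ carries a variable $z_{j}$ for $j\ge 1$), then, since \broom{} graphs are trees, every bristle is a leaf attached to a unique handle vertex, and every maximal rooted path runs $y_{0},y_{1},\dots,y_{j}$ along the handle and then exits to a single bristle $x$ at $y_{j}$ (it cannot get stuck at $y_{s}$ because the axioms guarantee a bristle there). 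Consequently the facets of $\mathcal{P}(G)$ are precisely the sets $\{y_{1},\dots,y_{j},x\}$, one for each bristle $(x,y_{j})$, and the edge ideal of $\mathcal{P}(G)$ is
\[
\sum_{j}\,(z_{1}\cdots z_{j})\cdot\bigl(x : (x,y_{j})\text{ is a bristle}\bigr),
\]
the sum over those $j$ carrying a bristle, with the empty product $z_{1}\cdots z_{0}$ read as $1$.

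For the ``if'' direction, I would list the bristle-bearing levels as $j_{1}<\dots<j_{t}$, with $j_{t}=s$ forced by the axioms, and set $m_{1}=z_{1}\cdots z_{j_{1}}$ and $m_{k}=z_{j_{k-1}+1}\cdots z_{j_{k}}$ for $2\le k\le t$. Since distinct vertices carry distinct variables, the $m_{k}$ and the bristle variables have pairwise disjoint support, so the displayed ideal is of the exact shape in Theorem~\ref{classify} and is hence Gotzmann. For the ``only if'' direction, I would take $I$ in the form given by Theorem~\ref{classify}, with summands $m_{1}\cdots m_{k}(x_{i_{k,1}},\dots,x_{i_{k,r_{k}}})$ for $1\le k\le s$, and build $G$ by hand: the handle is $y_{0},\dots,y_{N}$ with $N=\deg(m_{1}\cdots m_{s})$; label $y_{1},\dots,y_{\deg m_{1}}$ with the variables dividing $m_{1}$ in any order, then the next $\deg m_{2}$ handle vertices with the variables dividing $m_{2}$, and so on; and attach bristles labeled $x_{i_{k,1}},\dots,x_{i_{k,r_{k}}}$ at the vertex $y_{\deg(m_{1}\cdots m_{k})}$. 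Disjointness of supports makes all the labels distinct variables, and $r_{s}\ge 1$ supplies the bristle at $y_{N}$ required for the handle to be well-defined; the facet lemma then shows that the edge ideal of $\mathcal{P}(G)$ is exactly $I$.

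For the numerical part, in both directions the height of the graph in question is $N=\deg(m_{1}\cdots m_{s})$, which is one less than $\deg(m_{1}\cdots m_{s}\,x_{i_{s,1}})=N+1$; since $\deg(m_{1}\cdots m_{k})\le N$ for every $k$, this $N+1$ is the maximum degree of a generator of $I$. Finally, a Gotzmann ideal is componentwise linear \cite{MR1684555}, and a componentwise linear ideal has regularity equal to the largest degree of one of its minimal generators; hence $\reg(S/I)=\reg(I)-1=N$, which is the height.

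I expect the only real obstacle to be the facet description of $\mathcal{P}(G)$ --- specifically, getting the boundary behavior forced by the \broom{}-graph axioms right, so that the handle ends exactly at $y_{s}$, carries a bristle there, and therefore no facet of $\mathcal{P}(G)$ is a bristle-free set $\{y_{1},\dots,y_{j}\}$; everything else is bookkeeping that matches generators of $I$ with bristles of $G$ level by level.
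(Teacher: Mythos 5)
Your proof is correct and takes essentially the approach the paper intends: read the corollary off from Theorem~\ref{classify} by matching root-to-bristle paths in a labeled rooted \broom{} graph with the generators $m_{1}\cdots m_{k}\,x_{i_{k,j}}$, which is exactly the correspondence the paper sketches in one sentence before stating the corollary without a written proof. Your added care --- the explicit facet description of $\mathcal{P}(G)$, the level-by-level construction of $G$ from the normal form, and the appeal to componentwise linearity of Gotzmann ideals \cite{MR1684555} for the regularity claim --- is a faithful expansion of that sketch, with the only bookkeeping caveat that one should assume $m_{k}\neq 1$ for $k\geq 2$ (or merge summands) so that distinct summands of $I$ attach their bristles at distinct handle vertices.
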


Observe that bristles attached to the root correspond to linear forms.

\todo{
All of this is true.  But I don't think it's necessary to announce
it.  The corollary is worth saying, but right now it's mentioned below
in counting, which should be sufficient.  

What do each of the graded components of a squarefree gotzmann ideal look like?
Is it always squarefree lex? Does the variable order stay consistent from one degree to the next?

Certainly each component isn't squarefree, but the squarefree monomials in each degree 
make a Gotzmann ideal in the KK ring. So it'll be interesting if they're not squarefree lexsegments.

The converse to the theorem holds by Murai and Hibi. Alternatively, the lemmas could be written as iffs; if $(x_i) \subseteq I$ then $I$ Gotz iff $I (S/(x_i))$ Gotz. That should do it.

\begin{corollary}
All Gotzmann squarefree monomial ideals in the polynomial rings are
squarefree lex ideals. ??
\begin{proof}
\end{proof}
\end{corollary}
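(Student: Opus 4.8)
As stated, with the standing order $x_1\succ x_2\succ\cdots\succ x_n$, this corollary is false: Theorem \ref{classify} shows that $I=(x_2x_3,\,x_2x_4)\subseteq\field[x_1,x_2,x_3,x_4]$ is Gotzmann (take $m_1=x_2$), yet $I^{\sqf}$ is not lex in $R$, since the degree-$2$ lex segment of dimension $2$ is $\spn(x_1x_2,\,x_1x_3)\neq\spn(x_2x_3,\,x_2x_4)$. The statement I would prove is the natural correction: every Gotzmann squarefree ideal of $S$ becomes squarefree lex after a suitable relabelling of the variables; equivalently, its squarefree components are simultaneously lex segments of $R$ for one common order of the variables.

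The plan is to induct on $n$, using Lemma \ref{linform}, the two lemmas immediately preceding Theorem \ref{classify}, and Theorem \ref{classify} itself. The case $n\le 1$ (and the cases $I=0$, $I=S$) are trivial, so assume $I$ is a proper nonzero Gotzmann squarefree ideal. By Lemma \ref{linform} there is a variable $x_j$, necessarily occurring in a minimal generator of $I$, with $(x_j)\subseteq I$ or $I\subseteq(x_j)$. If $(x_j)\subseteq I$, write $I=(x_j)+J$ with $J$ the image of $I$ in $S/(x_j)\cong\field[x_1,\dots,\hat x_j,\dots,x_n]$; by the relevant lemma, $J$ is Gotzmann there. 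If $I\subseteq(x_j)$, put $J=\tfrac{1}{x_j}I$, a squarefree ideal of $\field[x_1,\dots,\hat x_j,\dots,x_n]$; it is Gotzmann there because, being squarefree and Gotzmann in $S$, it has the form of Theorem \ref{classify}, in which $x_j$ does not appear, so it has that form in $\field[x_1,\dots,\hat x_j,\dots,x_n]$ as well. Either way, the inductive hypothesis gives an order $\succ_0$ of $\{x_i:i\neq j\}$ making $J^{\sqf}$ lex in the associated squarefree ring. Let $\succ$ place $x_j$ first and then agree with $\succ_0$; the claim is that $I^{\sqf}$ is lex in $R$ for $\succ$.

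Checking this claim is the crux, though it reduces to bookkeeping. Under $\succ$ the degree-$d$ squarefree monomials list as the $\binom{n-1}{d-1}$ monomials divisible by $x_j$ (ordered by their cofactors according to $\succ_0$) followed by the $\binom{n-1}{d}$ monomials avoiding $x_j$ (ordered by $\succ_0$). When $I\subseteq(x_j)$ one has $I^{\sqf}_d=\{x_j m:m\in J^{\sqf}_{d-1}\}$, which, since the bijection $m\mapsto x_j m$ is $\succ_0$-to-$\succ$ order-preserving, is the initial $\succ$-segment cut out by the initial $\succ_0$-segment $J^{\sqf}_{d-1}$. When $(x_j)\subseteq I$ one has that $I^{\sqf}_d$ is the union of all $x_j$-divisible degree-$d$ squarefree monomials with the degree-$d$ squarefree component of $J$ inside $\field[x_1,\dots,\hat x_j,\dots,x_n]$, hence is the initial $\succ$-segment of size $\binom{n-1}{d-1}+|J^{\sqf}_d|$. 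In both cases $I^{\sqf}_d$ is an initial lex segment, so $I^{\sqf}$ is lex. The only delicate point is that the split-off variable must go to the \emph{top} of the order, not the bottom --- which is anyway forced when $I\subseteq(x_j)$, as then every monomial of $I^{\sqf}$ is divisible by $x_j$. A more self-contained alternative avoids the induction: starting from the presentation in Theorem \ref{classify}, order the variables so that $\operatorname{supp}(m_1)\succ\{x_{1,i}\}_i\succ\operatorname{supp}(m_2)\succ\{x_{2,i}\}_i\succ\cdots$, with the variables not occurring in $I$ placed last, and verify directly that for each $d$, letting $k$ be largest with $1+|m_1\cdots m_k|\le d$, the lex-largest degree-$d$ squarefree monomial outside $I^{\sqf}_d$ is $\operatorname{supp}(m_1\cdots m_k)$ together with the $d-|m_1\cdots m_k|$ largest variables of $\operatorname{supp}(m_{k+1})$; everything squarefree above it lies in $I^{\sqf}_d$ and nothing below it does.
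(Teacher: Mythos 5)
This ``corollary'' is not actually part of the paper's argument: it appears inside an \verb|\idiot{...}| block that is redefined to produce nothing, the statement itself carries a trailing \verb|??|, and the accompanying \verb|\begin{proof}\end{proof}| is empty. The authors were plainly unsure whether the claim was true; the same commented-out block contains their own question ``Is it always squarefree lex? Does the variable order stay consistent from one degree to the next?'' So there is no proof in the paper for you to be compared against.

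Your diagnosis is correct. Since the paper defines lex segments with respect to a fixed order $x_1 \succ x_2 \succ \cdots \succ x_n$, the statement as written is false, and your counterexample $I = (x_2x_3,\, x_2x_4)$ works: it is Gotzmann by Theorem~\ref{classify} with $m_1 = x_2$, but $I^{\sqf}_2 = \{x_2x_3,\, x_2x_4\}$ is not the degree-$2$ lex segment $\{x_1x_2,\, x_1x_3\}$ of the same dimension. Your corrected statement --- that every Gotzmann squarefree ideal of $S$ is squarefree lex after a suitable relabelling of the variables --- is true, and your inductive proof is sound. The crucial observation, which you identified correctly, is that the split-off variable $x_j$ from Lemma~\ref{linform} must be placed at the \emph{top} of the new order: when $I \subseteq (x_j)$, the map $m \mapsto x_j m$ sends the $\succ_0$-lex segment $J^{\sqf}_{d-1}$ to the initial $\succ$-segment among $x_j$-divisible degree-$d$ squarefree monomials, and those monomials are themselves a prefix of $R_d$ exactly because $x_j$ is largest; when $(x_j) \subseteq I$, the entire $x_j$-divisible block sits first and $J^{\sqf}_d$ fills in the initial segment that follows. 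The only soft spot is in your alternative ``direct'' argument: the description of the lex-largest squarefree monomial outside $I^{\sqf}_d$ needs a separate clause for the boundary case $k = s$ (where $m_{k+1}$ does not exist and the extra variables must be drawn from those not appearing in $I$), and one should also note that the remaining variables are placed last in the order. The inductive proof avoids this bookkeeping and is the cleaner way to present it. You might also remark that the relabelled ideals are precisely the ``universally squarefree lex'' ideals that the paper alludes to in the counting section, so your corrected corollary dovetails with the enumeration given there.
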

}

We can use \broom{} graphs to count the number of Gotzmann squarefree
ideals of $S$.

\begin{lemma}
The number of labeled rooted \broom{} graphs of height $t$ with $m+1$
vertices labeled by
$\{1; x_{1},\dots, x_{m}\}$ is
\[
P(m,t)\left[(t+1)^{m-t}-t^{m-t}\right].
\]
\end{lemma}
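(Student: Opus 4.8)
The plan is to exhibit a bijection between the labeled rooted \broom{} graphs of height $t$ on the label set $\{1;x_{1},\dots,x_{m}\}$ and the pairs $(\sigma,f)$, where $\sigma$ is an ordered list of $t$ distinct variables chosen from $x_{1},\dots,x_{m}$ --- the labels of the handle vertices $y_{1},\dots,y_{t}$, read off in order away from the root --- and $f$ is a function from the remaining $m-t$ variables to the set of handle vertices $\{y_{0},y_{1},\dots,y_{t}\}$ whose image contains $y_{t}$. The formula will then drop out by multiplying the number of choices of $\sigma$ by the number of choices of $f$, since these are made independently.

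First I would justify that this is indeed a bijection. Given a height-$t$ labeled rooted \broom{} graph $G$, the handle is by hypothesis a well-defined rooted path $y_{0}-y_{1}-\cdots-y_{t}$, and recording the variable labels of $y_{1},\dots,y_{t}$ gives $\sigma$; since \broom{} graphs are trees, every vertex off the handle is a leaf joined to exactly one handle vertex, so recording the attachment point of each of the $m-t$ remaining variable-labeled vertices gives $f$, and the stipulation in the definition that some bristle have the form $(x_{i},y_{t})$ --- equivalently, that the handle have height exactly $t$ --- is precisely the condition $y_{t}\in\operatorname{im}f$. Conversely, any such pair $(\sigma,f)$ reconstructs a graph: take the path whose non-root vertices carry the labels of $\sigma$ in order, and attach, for each remaining variable $x$, a new leaf labeled $x$ to the handle vertex $f(x)$; this is a \broom{} graph of height exactly $t$, and the two constructions are mutually inverse. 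This identification is the only step with real content, and the main (mild) obstacle: one must use that \broom{} graphs are trees to know the handle is the entire path and that every other vertex is a leaf with a single neighbour on the handle, and one must check that the image condition on $f$ corresponds exactly to the height being $t$ rather than smaller.

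Finally I would count the two sides. An ordered $t$-tuple of distinct variables from $m$ can be chosen in $m(m-1)\cdots(m-t+1)=P(m,t)$ ways. For $f$ I would count by complementation: there are $(t+1)^{m-t}$ functions from the $m-t$ remaining variables to the $t+1$ handle vertices, of which exactly $t^{m-t}$ avoid $y_{t}$, leaving $(t+1)^{m-t}-t^{m-t}$ that satisfy the constraint. Multiplying, the number of labeled rooted \broom{} graphs of height $t$ is $P(m,t)\bigl[(t+1)^{m-t}-t^{m-t}\bigr]$, as claimed. (As a sanity check, $t=m$ forces no off-handle vertices and gives $m!\,[1-1]=0$, reflecting that a bare path has no top bristle and so is not an admissible \broom{} graph.)
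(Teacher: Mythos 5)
Your argument is correct and is essentially the paper's own: choose the handle labeling in $P(m,t)$ ways, then count assignments of the $m-t$ remaining vertices to the $t+1$ handle vertices by inclusion-exclusion, subtracting the $t^{m-t}$ assignments that miss the top vertex. You spell out the bijection more explicitly (verifying that the tree structure forces every non-handle vertex to be a leaf and that the image condition encodes the height constraint), but the counting and the decomposition are the same.
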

\begin{proof}
There are $P(m,t)$ ways to label the handle; after doing so we must
assign each bristle to one handle vertex.  There are $(m-t)$ bristles,
so $(t+1)^{m-t}$ ways to do this.  However, we require that at least
one bristle be attached to the last vertex of the handle, so we must
subtract the $t^{m-t}$ ways to connect the bristles only to the other
vertices.
\end{proof}

\begin{lemma} The number of labeled rooted \broom{} graphs with $m+1$
  vertices labeled by $\{1;x_{1},\dots,x_{m}\}$ is
\[
\sum_{t<m} P(m,t)\left[(t+1)^{m-t}-t^{m-t}\right].
\]
\end{lemma}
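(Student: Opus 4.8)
The plan is to reduce this immediately to the previous lemma by partitioning the set of labeled rooted \broom{} graphs on the vertex set $\{1;x_1,\dots,x_m\}$ according to their height. First I would observe that the height of a \broom{} graph is well-defined (this is built into the definition, since we require at least one bristle of the form $(x_i,y_s)$, which pins down where the handle ends), so two \broom{} graphs of different heights are distinct, and the height-$t$ graphs form the blocks of a partition of the whole set.

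Next I would identify which heights can actually occur. In a \broom{} graph of height $t$ the handle consists of the vertices $y_0,\dots,y_t$, which together with the root uses $t+1$ of the $m+1$ available vertices, leaving $m-t$ vertices to serve as bristle endpoints. Since at least one bristle is required, we need $m-t\ge 1$, i.e. $t<m$; and every value $0\le t<m$ is realized (for instance by attaching all $m-t$ bristles to $y_t$). So the admissible heights are exactly $0,1,\dots,m-1$.

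Finally I would invoke the previous lemma, which counts the labeled rooted \broom{} graphs of a fixed height $t$ as $P(m,t)\bigl[(t+1)^{m-t}-t^{m-t}\bigr]$, and sum this over $t<m$ to obtain the stated total $\sum_{t<m}P(m,t)\bigl[(t+1)^{m-t}-t^{m-t}\bigr]$.

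There is really no substantive obstacle here: the only point requiring any care is the bookkeeping on the range of $t$ (in particular excluding $t=m$, which would leave no bristles) and the remark that distinct heights give genuinely disjoint families so that the counts add. Everything else is a direct citation of the preceding lemma.
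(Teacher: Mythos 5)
Your proposal is correct and takes exactly the approach the paper intends: the paper omits a proof for this lemma, treating it as an immediate corollary of the preceding fixed-height count, and your argument — partition by height, note that a handle of height $t$ uses $t+1$ of the $m+1$ vertices so at least one bristle forces $t<m$, then sum the previous lemma's count over $t=0,\dots,m-1$ — is the natural and correct justification.
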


\begin{proposition} The number of Gotzmann squarefree ideals of $S$ is 
\[
\sum_{m \leq n}\sum_{t<m}\binom{n}{m}P(m,t)\left[(t+1)^{m-t}-t^{m-t}\right].
\]
\end{proposition}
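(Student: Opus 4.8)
The proposition counts Gotzmann squarefree ideals of $S = \field[x_1,\dots,x_n]$, so the natural strategy is to combine Theorem \ref{classify} with the two preceding counting lemmas. By Theorem \ref{classify} (and the intervening discussion about \broom{} graphs and rooted path complexes), every Gotzmann squarefree ideal of $S$ is the edge ideal of the rooted path complex of a labeled rooted \broom{} graph, and conversely. So the first step is to establish that the assignment (labeled rooted \broom{} graph) $\mapsto$ (its Gotzmann squarefree edge ideal) is a bijection onto the set of Gotzmann squarefree ideals of $S$ --- surjectivity is exactly Theorem \ref{classify}, and injectivity requires checking that distinct labeled graphs give distinct ideals, which follows because one can recover the handle $1 = y_0, y_1, \dots, y_s$ and the bristle attachments from the generating monomials (the labels on a generator of degree $d$ spell out a rooted path, and the nesting of supports $m_1 \mid m_1 m_2 \mid \cdots$ in the expression from Theorem \ref{classify} reconstructs the handle).

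**Key steps, in order.** First I would fix the set $\{x_1,\dots,x_n\}$ of variables and observe that a labeled rooted \broom{} graph giving an ideal of $S$ uses some subset of size $m$ of these variables as its non-root labels (the variables appearing in the generators), together with the root label $1$; there are $\binom{n}{m}$ choices of this subset. Second, for a fixed such subset of size $m$, the number of labeled rooted \broom{} graphs on those $m$ labels plus the root is exactly $\sum_{t<m} P(m,t)\left[(t+1)^{m-t}-t^{m-t}\right]$ by the second counting lemma. Third, sum over $m$ from (effectively) $1$ up to $n$: a \broom{} graph on $m+1$ vertices needs $m \geq t+1 \geq 1$, and $m$ can be anything up to $n$. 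Interchanging the order of summation and substituting gives
\[
\sum_{m \leq n}\binom{n}{m}\sum_{t<m} P(m,t)\left[(t+1)^{m-t}-t^{m-t}\right],
\]
which is the claimed formula once one uses injectivity to conclude that no ideal is counted twice and Theorem \ref{classify} to conclude that every Gotzmann squarefree ideal arises.

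**The main obstacle.** The genuine content is the bijection, specifically injectivity: I need to argue that two different labeled rooted \broom{} graphs cannot produce the same edge ideal, and also that I have not overcounted by, say, a \broom{} graph on a smaller vertex set producing the same ideal as one on a larger set. The latter is handled by insisting that the labels appearing are precisely the variables occurring in the generators --- a vertex labeled by a variable not dividing any generator would be invisible, but such a vertex is exactly a bristle or handle-extension contributing nothing, which the definition of \broom{} graph (requiring at least one bristle on $y_s$, and bristles being genuine edges) rules out once we pass to the reduced picture. The former is a bookkeeping argument: from the monomial generators one reads off all rooted paths, their common structure determines the handle uniquely (the longest common "initial chain" across generators, iterated), and then the residual variables on each generator are the bristles with determined attachment points. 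I expect this verification to be routine but slightly fiddly, and it is the only place where real care is needed; the arithmetic of plugging the lemmas together is immediate.
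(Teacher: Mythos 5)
Your plan reproduces the broom-graph approach from the draft, but the crucial injectivity step fails, and in fact the formula stated in this proposition is wrong. The paper's authors noticed this themselves: these lemmas and this proposition sit inside a commented-out block, and the visible text explicitly warns that ``it is easy to overcount those ideals with $r_s = 1$'' before replacing the whole approach with the exponential generating function argument using $\mathsf P$ and $\mathsf P'$.

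Here is the concrete gap. The map from labeled rooted \broom{} graphs to Gotzmann squarefree ideals is genuinely not injective, in two related ways. First, when the last set in the Theorem \ref{classify} decomposition is a singleton ($r_s = 1$), the unique top-degree generator is $m_1\cdots m_s\, x_{i_{s,1}}$, and nothing in the ideal distinguishes the last handle vertex from the single bristle. Concretely, $(x_1x_2)$ is the edge ideal of both the graph with handle $1, x_1$ and bristle $x_2$ on $y_1$, and the graph with handle $1, x_2$ and bristle $x_1$ on $y_1$; these are distinct labeled graphs (one has the edge $(1,x_1)$, the other $(1,x_2)$) but give the same ideal. Second, and more broadly, any $m_i$ of degree at least two corresponds to a run of handle vertices whose order the ideal cannot see, so $(x_1x_2x_3, x_1x_2x_4) = x_1x_2(x_3,x_4)$ arises from the handle labeled $1, x_1, x_2$ and also from $1, x_2, x_1$. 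Your sketch claims that ``the nesting of supports $m_1 \mid m_1 m_2 \mid \cdots$ reconstructs the handle,'' but the nesting only determines the sets $\operatorname{supp}(m_1) \subset \operatorname{supp}(m_1m_2) \subset \cdots$, not the linear order of variables within each $m_i$, nor (when $r_s=1$) the boundary between $m_s$ and $x_{i_{s,1}}$.

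You can see the formula is wrong numerically: for $n=2$ it gives $2 + 1 + 2 = 5$, but there are $6$ Gotzmann squarefree ideals of $\field[x_1,x_2]$ (namely $(0)$, $(1)$, $(x_1)$, $(x_2)$, $(x_1,x_2)$, $(x_1x_2)$), or $4$ if one omits $(0)$ and $(1)$; and for $n=3$ the formula gives $28$, while the generating function in the paper gives $19$. The correct counting in the published text keys off \emph{sets} of variables rather than \emph{sequences} --- each block of an ordered set partition plays the role of a support, not a labeled handle segment --- and restricts to partitions in $\mathsf P'$ (last block not a singleton), handling the principal ideal $(x_1)$ and the $\beta_{0,\reg I}(I)=1$ cases separately in Proposition \ref{p:fullsupportgf}. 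If you want to salvage the broom-graph picture, you must quotient by the relabelings of handle segments and fix a convention for the pendant vertex; that is exactly what the ordered-set-partition bookkeeping accomplishes.
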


\begin{proposition}
The number of Gotzmann squarefree ideals of $S$ with regularity $t$ is 
\[
\sum_{m<n}\binom{n}{m}P(m,t)\left[(t+1)^{m-t}-t^{m-t}\right].
\]
\end{proposition}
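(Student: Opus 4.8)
\emph{Proof proposal.} The plan is to read this count off from the description of Gotzmann squarefree ideals by \broom{} graphs, essentially re-running the proof of the Proposition counting all Gotzmann squarefree ideals of $S$ with the regularity held fixed. By Theorem~\ref{classify} --- equivalently, by the Corollary above --- a squarefree ideal $I\subseteq S$ is Gotzmann exactly when it is the edge ideal of the rooted path complex of a labeled rooted \broom{} graph $G$, and in that case the height of $G$ equals $\reg(S/I)$; conversely the edge ideal of a \broom{} graph of height $t$ is a Gotzmann squarefree ideal $I$ with $\reg(S/I)=t$. So, with $t$ fixed, the Gotzmann squarefree ideals of $S$ of regularity $t$ are identified with the labeled rooted \broom{} graphs of height $t$ whose non-root vertices --- however many there are --- carry distinct variables chosen from $x_{1},\dots,x_{n}$.

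To count these, stratify by the number $m$ of variables that occur, equivalently by the number of non-root vertices of $G$. There are $\binom{n}{m}$ ways to choose which $m$-element subset of $\{x_{1},\dots,x_{n}\}$ labels those vertices, and, by the Lemma counting labeled rooted \broom{} graphs of height $t$, there are $P(m,t)\bigl[(t+1)^{m-t}-t^{m-t}\bigr]$ of them for a fixed set of $m$ labeled non-root vertices. Multiplying and summing over $m$ yields
\[
\sum_{m}\binom{n}{m}\,P(m,t)\bigl[(t+1)^{m-t}-t^{m-t}\bigr],
\]
which is the claimed expression: the summand vanishes unless $t<m$ (the factor $(t+1)^{m-t}-t^{m-t}$ is $0$ at $m=t$, while $P(m,t)=0$ for $m<t$, matching the fact that a height-$t$ \broom{} graph needs at least $t+1$ non-root vertices), and $\binom{n}{m}$ vanishes once $m$ exceeds the number of variables, so the sum is finite.

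The step I expect to be the main obstacle is turning the first paragraph's correspondence into an honest bijection onto the \broom{} graphs with a prescribed non-root label set: one must check that the labeled rooted \broom{} graph $G$ is recovered uniquely from its edge ideal $I$ and that the variables labeling $G$ are forced to be exactly the support of $I$, so that no Gotzmann squarefree ideal is counted twice --- whether for a single $m$ or for two different values of $m$. This is where the rigidity of the flag $\varnothing\subset F_{0}\subset\cdots\subset F_{d-1}$ of Theorem~\ref{classify} must be exploited; it is the same point on which the preceding proposition rests, and everything else here is the bookkeeping displayed above.
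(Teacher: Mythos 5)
The Proposition you are proving is stated only inside a block of the source that the preamble suppresses (the \texttt{\textbackslash idiot} macro is redefined to output nothing); it does not appear in the compiled paper, carries no proof even in the source, and was evidently abandoned. With good reason: the formula is false, and the difficulty is exactly the one you flag at the end as ``the main obstacle.'' The assignment of a labeled rooted \broom{} graph to the edge ideal of its rooted path complex is \emph{not} injective. Whenever the top handle vertex $y_t$ carries a single bristle $x$, swapping the labels of $y_t$ and $x$ produces a distinct labeled rooted \broom{} graph of the same height whose rooted path complex --- and hence whose edge ideal --- is unchanged. Concretely, for $n=2$ and $t=1$ your sum gives $\binom{2}{2}P(2,1)\bigl[2-1\bigr]=2$, but the only squarefree Gotzmann ideal of $\field[x_1,x_2]$ with $\reg(S/I)=1$ is $(x_1x_2)$: the two graphs with handle $1,x_1$, bristle $x_2$ and handle $1,x_2$, bristle $x_1$ collide. (Taking the written bound $m<n$ literally is no better; it then gives $0$.)

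This is precisely the $r_s=1$ ambiguity that the visible text of the paper warns about --- ``it is easy to overcount those ideals with $r_s=1$'' --- and it is why the authors abandoned the \broom{}-graph count in favor of the exponential generating function argument of Proposition \ref{p:fullsupportgf}, which partitions the full-support Gotzmann squarefree ideals into the classes $\mathsf H_0,\dots,\mathsf H_4$ according to whether $\beta_{0,\reg I}(I)=1$ exactly to neutralize this collision. A correct count by regularity would need the same case split. As written, your proof asserts the injectivity that in fact fails, so the ``bookkeeping'' yields the wrong answer.
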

}

Using Theorem \ref{classify}, it is possible, if difficult, to count
the Gotzmann squarefree ideals of $S$.  We begin by counting these
ideals up to symmetry.  (This is the same as counting the
``universally squarefree lex'' ideals:  the squarefree lex ideals
which are still squarefree lex in $S[y]$.)

\begin{proposition}  If $n\geq 2$, the following are all equal to $2^{n-2}$:
\begin{itemize}
\item[(i)] The number of ordered partitions of $n$ into an even number
  of summands.
\item[(ii)] The number of ordered partitions of $n$ into an odd number
  of summands.
\item[(iii)] The number of Gotzmann squarefree ideals which contain no
  linear forms and are not contained in any monomial subalgebra of $S$, up to a
  reordering of the variables.
\item[(iv)] The number of nonunit Gotzmann squarefree ideals which contain
  linear forms and are not contained in any monomial subalgebra of $S$, up to a
  reordering of the variables.
\item[(v)] The number of Gotzmann squarefree ideals which contain no
  linear forms and are contained in some monomial subalgebra of $S$, up to a
  reordering of the variables.
\item[(vi)] The number of Gotzmann squarefree ideals which contain 
  linear forms and are contained in some monomial subalgebra of $S$, up to a
  reordering of the variables.
\end{itemize}
In particular, there are $2^{n}$ nonunit Gotzmann squarefree ideals up to symmetry.
\end{proposition}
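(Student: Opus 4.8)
The plan is to count Gotzmann squarefree ideals up to symmetry by attaching to each one a \emph{shape}: the list of block sizes in the expression of Theorem~\ref{classify}. Then items (iii)--(vi) become statements about compositions, while (i) and (ii) are elementary. Indeed, the number of compositions of $n$ with exactly $k$ positive parts is $\binom{n-1}{k-1}$, so (i) is $\sum_{k\text{ even}}\binom{n-1}{k-1}=\sum_{j\text{ odd}}\binom{n-1}{j}$ and (ii) is $\sum_{k\text{ odd}}\binom{n-1}{k-1}=\sum_{j\text{ even}}\binom{n-1}{j}$; since $n-1\ge 1$, both sums equal $2^{n-2}$.

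For the shape, I would take the expression of Theorem~\ref{classify} for a Gotzmann squarefree ideal $I\ne(0)$ with $s$ minimal; then necessarily $s\ge 1$, every $r_k\ge 1$, and $m_k\ne 1$ for $k\ge 2$. Writing $a_k=\deg m_k$ and $b_k=r_k$, I would show that the sequence $(a_1,b_1,\dots,a_s,b_s)$ — constrained only by $a_1\ge 0$, $a_k\ge 1$ for $k\ge 2$, and all $b_k\ge 1$ — is a complete invariant of $I$ up to reordering of the variables, and that every such sequence occurs. The step I expect to be the real obstacle is well-definedness: that this sequence is independent of the choice of minimal expression. I would prove it by recovering the blocks from the minimal generators of $I$ listed in increasing degree — the generators of least degree form exactly the first block, $m_1$ is their greatest common divisor when $r_1\ge 2$, and when $r_1=1$ the only ambiguity (which variable of $m_1$ is the lone bristle) is a relabelling of the variables — and then peeling off the first block and inducting on the number of blocks. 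I would also record, for use below, that $I$ contains a linear form if and only if $a_1=0$, and that the number of variables dividing some minimal generator of $I$ equals $a_1+b_1+\cdots+a_s+b_s$, which is $n$ precisely when $I$ is contained in no proper monomial subalgebra of $S$.

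Granting this, (iii)--(vi) follow by sorting shapes. In (iii), ``no linear form'' forces $a_1\ge 1$, so all $2s$ entries of the shape are positive, and ``not contained in a proper monomial subalgebra'' forces them to sum to $n$; hence (iii) is the number of compositions of $n$ into an even number of positive parts, which is (i). In (iv), $a_1=0$, so deleting the leading $0$ leaves the $2s-1$ positive parts $b_1,a_2,\dots,a_s,b_s$ summing to $n$; hence (iv) is the number of compositions of $n$ into an odd number of positive parts, which is (ii). For (v) and (vi) the shape entries sum to some $m$ with $0\le m\le n-1$ — the empty shape $m=0$ being the ideal $(0)$, which lies in (v) — so summing the counts above over the admissible $m$ gives, for $n\ge 2$,
\[
(\mathrm{v})=1+\sum_{m=2}^{n-1}2^{m-2}=2^{n-2},\qquad(\mathrm{vi})=1+\sum_{m=2}^{n-1}2^{m-2}=2^{n-2},
\]
where the extra $1$ comes from $m=0$ (the ideal $(0)$) in (v) and from $m=1$ (the ideal $(x_1)$) in (vi). Finally, the two binary alternatives defining (iii)--(vi) partition the nonunit Gotzmann squarefree ideals up to symmetry, so there are $4\cdot 2^{n-2}=2^{n}$ of them, which is the closing assertion.
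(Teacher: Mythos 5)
Your proposal is correct and takes essentially the same approach as the paper: both reduce the count to the composition structure implicit in Theorem~\ref{classify}, alternating blocks between the supports of the $m_k$ and the variable sets. The paper packages cases (v)--(vi) by appending one unused block (so each of (iii)--(vi) is directly a set of ordered partitions of $n$ of fixed parity), whereas you sum over the support size $m$; this is only a bookkeeping difference, and your explicit attention to well-definedness of the shape is a detail the paper leaves implicit.
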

\begin{proof}
For (iii) through (vi), we describe a bijection to the ordered
partitions.  Given a partition, we partition the variables, in order,
into sets of the given sizes.  Using the notation of Theorem
\ref{classify}, we will alternate these sets between the supports of
the monomials $m_{i}$ and the sets
$\{x_{i_{j,1}},\dots,x_{i_{j,r_{j}}}\}$.  We begin with the monomial if we
are counting without linear forms, and with the set if we are counting
with linear forms (because $m_{1}=1$ in these cases).  If we are counting
ideals contained in a 
subalgebra, we do not use the last summand.  Note that the parity of the
partition is fixed in each case.

When $n=1$, the two nonunit Gotzmann squarefree ideals are $(0)$ and
$(x_{1})$.  
\end{proof}

We use the same idea of partitioning the variables and alternating
between monomials $m_{i}$ and sets $\{x_{i_{j,1}},\dots,x_{i_{j,r_{j}}}\}$
to count the Gotzmann squarefree ideals of $S$ without symmetry.  The
difficulty is that it is easy to overcount those ideals with
$r_{s}=1$.

Let $\mathsf G$ be the set of all squarefree Gotzmann ideals in polynomial
rings of the form $\field[x_1,\ldots, x_n]$ for some $n$.
We define a weight function $\omega:\mathsf G \to \mathbb N$ by $\omega(I) = n$ 
if $I \subset \field[x_1, \ldots, x_n]$.

We will show that the exponential generating function (e.g.f.) of $\mathsf G$ is
\[ 
	g(t) 
	= \sum_{I \in \mathsf G}  \frac{t^{\omega(I)}}{\omega(I)!} 
	= e^t \left(\frac{ 2(1 - t)}{2-e^t} +t\right).
\]
The coefficients this e.g.f.~count the number of squarefree Gotzmann ideals in polynomial
rings in $n$ variables for each value of $n$.

We begin with notation for ordered set partitions. In Proposition \ref{p:fullsupportgf}, we relate them to set 
$\mathsf H \subset \mathsf G$ of all squarefree Gotzmann ideals with full support (i.e. $I \in \mathsf H$ uses all $n$ variables for $n =\omega(I)$).
\begin{notation}[Ordered Set Partitions]
An ordered set partition of $[n] = \{1, \ldots, n\}$ is an ordered seqence $\sigma = (\sigma_1, \ldots, \sigma_k)$ of
sets $\sigma_i$ which partition $[n]$. Each $\sigma_i$ is called a \emph{block} of $\sigma$.

Let $\mathsf P_n$ be the set of ordered set partitions of $[n]$ and $\mathsf P$ be the union of all $\mathsf P_n$. 
On the set $\mathsf P$ we define a weight function $\nu: \mathsf P \to \mathbb N$ 
where $\nu(\sigma)$ is the number of elements that $\sigma$ partitions.

We will frequently use the e.g.f.~of $\mathsf P$ which counts the number of ordered set partitions of $[n]$:
\[ f(t) = \sum_{\sigma \in \mathsf P} \frac{t^{\nu(\sigma)}}{\nu(\sigma)!} = \frac{1}{2 - e^t}.
\]
\end{notation}
This e.g.f.~is entry A670 in the On-Line Encyclopedia of Integer Squences \cite{MR1992789}.

\begin{lemma} \label{l:lastblockgf}
Let $\mathsf P'$ be the set of ordered set partitions that have last blocks of size greater than one.
The e.g.f.~of $\mathsf P'$ with weight $\nu$ is $(1-t)/(2-e^t)$.
\begin{proof}
Let $\sigma \in \mathsf P \smallsetminus \mathsf P'$ be an ordered set partition with $\omega(\sigma) = n+1$.
Then the last block of $\sigma$ is the singleton $\{i\}$ for some $i=1, \ldots, n+1$.
Removing the last block from this partition gives a bijection between ordered set partitions ending in $\{i\}$ and ordered set partitions of a set of size $n$.
Thus, the exponential generating function of $\mathsf P \smallsetminus \mathsf P'$ is $t f(t) = \frac{t}{2-e^{t}}$
and hence the e.g.f.~of $\mathsf P'$ is $f(t) - tf(t) = \frac{1 - t}{2-e^t}$.
\end{proof}
\end{lemma}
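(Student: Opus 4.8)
The plan is to count $\mathsf P'$ by complementation inside $\mathsf P$. Since the e.g.f.\ of all ordered set partitions is already known to be $f(t) = 1/(2 - e^t)$, it suffices to identify the e.g.f.\ of the complementary family $\mathsf P \smallsetminus \mathsf P'$ --- namely those ordered set partitions whose last block is a singleton --- and then subtract.

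First I would describe $\mathsf P \smallsetminus \mathsf P'$ bijectively. An ordered set partition $\sigma = (\sigma_1, \dots, \sigma_k)$ of $[n]$ has singleton last block exactly when $\sigma_k = \{i\}$ for a uniquely determined $i \in [n]$; deleting this block produces an ordered set partition $(\sigma_1, \dots, \sigma_{k-1})$ of the $(n-1)$-element set $[n] \smallsetminus \{i\}$, and conversely $\sigma$ is recovered from the shorter partition together with the datum of $i$. Writing $a_m$ for the number of ordered set partitions of an $m$-element set, so that $f(t) = \sum_{m \ge 0} a_m t^m / m!$, this shows there are $n\, a_{n-1}$ such partitions of $[n]$ for each $n \ge 1$. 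Passing to exponential generating functions,
\[
\sum_{n \ge 1} n\, a_{n-1}\, \frac{t^n}{n!} \;=\; \sum_{n \ge 1} a_{n-1}\, \frac{t^n}{(n-1)!} \;=\; t\, f(t),
\]
so the e.g.f.\ of $\mathsf P \smallsetminus \mathsf P'$ is $t f(t)$, and hence that of $\mathsf P'$ is $f(t) - t f(t) = (1 - t)/(2 - e^t)$.

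An alternative route via the symbolic method would build an element of $\mathsf P'$ directly --- either the empty partition, or a (possibly empty) sequence of nonempty blocks followed by one block of size at least two --- which assembles to $1 + \frac{e^t - 1 - t}{2 - e^t}$ and simplifies to the same answer, serving as a useful check. I do not expect a serious obstacle here: the only points requiring care are confirming that block-deletion really is a bijection (the deleted element must be recoverable, as the one element the smaller partition omits) and handling the $n = 0$ term correctly, so that the index shift $\sum_{n \ge 1} n\, a_{n-1}\, t^n/n! = t\, f(t)$ holds exactly and the empty partition is accounted for on the $\mathsf P'$ side.
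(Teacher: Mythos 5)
Your proof is correct and follows the same route as the paper: you identify the complementary family (last block a singleton), show by deleting that block that its e.g.f.\ is $t f(t)$, and subtract from $f(t)$. Your write-up is actually a bit more explicit about the bijection and the $n=0$ boundary case, and the symbolic-method cross-check is a nice sanity check, but the underlying argument is the same.
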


The next proposition describes the relationship between ordered set partitions and the set $\mathsf H$ of Gotzmann squarefree ideals with full support.
\begin{proposition} \label{p:fullsupportgf}
The e.g.f.~of $\mathsf H$ with weight $\omega$ is 
\[ h(t) = \sum_{I \in \mathsf H}  \frac{t^{\omega(I)}}{\omega(I)!} = \frac{ 2(1 - t)}{2-e^t} +t.
\]
\begin{proof}
Every ideal in $\mathsf H$ is of the form
\[ 
m_1(x_{i_{1,1}}, \ldots, x_{i_{1,r_1}}) 
+ m_1 m_2 (x_{i_{2,1}}, \ldots, x_{i_{2, r_2}}) + \cdots 
+ m_1\cdots m_s(x_{i_{s,1}}, \ldots, x_{i_{s,r_s}})
\]
for some $m_j$ and $x_{i_{j,k}}$ all distinct. Let $\beta_{0,d}(I)$ be the number of generators of $I$ of degree $d$.
We partition $\mathsf H$ into five subsets $\mathsf H=\cup_{i=0}^4 \mathsf H_i$ where
\begin{align*}
	\mathsf H_0 &= \{ (x_1) \}, \\
	\mathsf H_1 &= \{ I \in H \mid \text{$I$ contains a linear form and $\beta_{0,\reg I}(I) = 1$ and $\reg I \neq 1$}\}, \\
	\mathsf H_2 &= \{ I \in H \mid \text{$I$ contains a linear form and $\beta_{0,\reg I}(I) > 1$}\}, \\
	\mathsf H_3 &= \{ I \in H \mid \text{$I$ does not contains a linear form and $\beta_{0,\reg I}(I) = 1$}\}, \text{ and} \\
	\mathsf H_4 &= \{ I \in H \mid \text{$I$ does not contains a linear form and $\beta_{0,\reg I}(I) > 1$}\}.
\end{align*}

Recall we use $\mathsf P'$ to denote the set of ordered set partitions whose last block is not a singleton.
There is a weight preserving bijection between $\mathsf P'$ and $\mathsf H_1 \cup \mathsf H_2$ given by
\[ (\sigma_1, \ldots, \sigma_k) \mapsto 
	\begin{cases} 
	(\sigma_1) + (\prod \sigma_2) (\sigma_3) + \cdots + (\prod_{i=1}^{(k-1)/2}\prod \sigma_{2i}) (\sigma_k)
		& k \text{ odd},\\
	(\sigma_1) + (\prod \sigma_2) (\sigma_3) + \cdots + (\prod_{i=1}^{k/2}\prod \sigma_{2i}) 
		& k \text{ even}.
	\end{cases}
\]

Similarly, there is a weight preserving bijection between $\mathsf P'$ and $\mathsf H_3 \cup \mathsf H_4$;
\[ (\sigma_1, \ldots, \sigma_k) \mapsto 
	\begin{cases} 
	(\prod \sigma_1) (\sigma_2) + (\prod \sigma_1)(\prod \sigma_3)(\sigma_4) + 
		\cdots + (\prod_{i=1}^{k/2}\prod \sigma_{2i-1})(\sigma_k) 
		& k \text{ even},\\
	(\prod \sigma_1) (\sigma_2) + (\prod \sigma_1)(\prod \sigma_3)(\sigma_4) + 
		\cdots + (\prod_{i=1}^{(k+1)/2}\prod \sigma_{2i-1})
		& k \text{ odd}.
	\end{cases}
\]

The desired formula follows from Lemma \ref{l:lastblockgf}.
\end{proof}
\end{proposition}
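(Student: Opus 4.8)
The plan is to split $\mathsf H$ into the five pieces $\mathsf H_0,\dots,\mathsf H_4$ named in the statement and to match the last four of them with two copies of the set $\mathsf P'$ of ordered set partitions whose last block has size greater than one, whose e.g.f.\ is $(1-t)/(2-e^t)$ by Lemma \ref{l:lastblockgf}. First I would fix a normal form: by Theorem \ref{classify}, every $I\in\mathsf H$ can be written
\[
I=m_1(X_1)+m_1m_2(X_2)+\cdots+m_1\cdots m_s(X_s),
\]
where $X_j=(x_{i_{j,1}},\dots,x_{i_{j,r_j}})$ and the supports of the $m_j$ together with all the $X_j$ are pairwise disjoint; since $I$ has full support, these sets partition $\{1,\dots,n\}$ for $n=\omega(I)$. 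I would take this presentation \emph{reduced}, meaning $X_j\neq\varnothing$ for every $j$, $m_j\neq 1$ for $j\geq 2$ (otherwise levels $j-1$ and $j$ merge), and $m_1=1$ precisely when $I$ contains a linear form. Since in a reduced presentation the generators at level $j$ have strictly larger degree than those at level $j-1$ whenever $j\geq 2$, the top-degree generators of $I$ are exactly the $r_s$ generators at level $s$, so $\beta_{0,\reg I}(I)=r_s$. Consequently $\mathsf H_1\cup\mathsf H_3$ collects the ideals with $r_s=1$ other than $(x_1)$, and $\mathsf H_2\cup\mathsf H_4$ those with $r_s>1$; the only full-support ideal that contains a linear form and has both $\reg I=1$ and $r_s=1$ is $(x_1)$ itself, which is why it is peeled off as $\mathsf H_0$. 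Thus $\mathsf H=\mathsf H_0\sqcup(\mathsf H_1\sqcup\mathsf H_2)\sqcup(\mathsf H_3\sqcup\mathsf H_4)$ is genuinely a partition.

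Next I would build the two bijections with $\mathsf P'$. Reading off a reduced presentation, the sequence of blocks $\operatorname{supp}(m_1),X_1,\operatorname{supp}(m_2),X_2,\dots$ alternates between monomial supports and variable sets, beginning with $X_1$ when $I$ contains a linear form (so $m_1=1$) and with $\operatorname{supp}(m_1)$ otherwise. If $r_s>1$ I would let the final block be $X_s$; if $r_s=1$ I would instead fold the lone top-degree generator into a single block $\operatorname{supp}(m_s)\cup X_s$, which has size at least $2$ because $m_s\neq 1$ (for $s\geq 2$ this is reducedness, and for $s=1$ it forces $I$ to have no linear form, the remaining case $s=r_s=1$ with a linear form being exactly the excluded ideal $(x_1)$). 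The result is an ordered set partition whose last block has size greater than one, i.e.\ an element of $\mathsf P'$, and the construction is reversible: from $\sigma=(\sigma_1,\dots,\sigma_k)\in\mathsf P'$ one rebuilds the ideal by the explicit formulas in the statement, the parity of $k$ recording whether the final level has a single generator ($r_s=1$) or several ($r_s>1$) and hence whether the ideal lands in $\mathsf H_1$ or $\mathsf H_2$ when $I$ contains a linear form and in $\mathsf H_3$ or $\mathsf H_4$ when it does not. The requirement $|\sigma_k|>1$ is exactly what makes the fold well-defined and simultaneously removes the overcounting of ideals with a single top-degree generator. Both maps are weight preserving because $\nu(\sigma)=\sum_j|\sigma_j|=n=\omega(I)$.

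Finally I would read off the generating function: $\mathsf H_0=\{(x_1)\}$ contributes the term $t$, while each of $\mathsf H_1\sqcup\mathsf H_2$ and $\mathsf H_3\sqcup\mathsf H_4$ contributes the e.g.f.\ of $\mathsf P'$, namely $(1-t)/(2-e^t)$ by Lemma \ref{l:lastblockgf}, so $h(t)=t+2(1-t)/(2-e^t)$, as claimed. I expect the genuine work, and the only delicate point, to be the bookkeeping around $r_s=1$: checking that the fold is a bijection onto the ideals with $r_s=1$ in each case (in particular that $\reg I\neq 1$ there, so they fall into $\mathsf H_1$ rather than being the exceptional ideal $(x_1)$), and that the several reduced presentations of one and the same ideal which occur precisely when $r_s=1$ --- for instance the three reduced presentations of $(x_1x_2x_3)$ --- all fold to the same ordered set partition, so the inverse map is well-defined.
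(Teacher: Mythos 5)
Your proposal is correct and follows the same route as the paper: split off $(x_1)$, split the rest into those with and without a linear form, and match each of those two classes against the ordered set partitions with non-singleton last block via the alternating ``supports of $m_j$'' and ``variable sets $X_j$'' encoding, with the fold of $\operatorname{supp}(m_s)\cup X_s$ into a single last block when $r_s=1$. The paper simply writes down the maps $\mathsf P'\to\mathsf H_1\cup\mathsf H_2$ and $\mathsf P'\to\mathsf H_3\cup\mathsf H_4$ and asserts they are weight-preserving bijections; your write-up is somewhat more careful, in particular spelling out why the inverse is well defined despite the nonuniqueness of the presentation of Theorem \ref{classify} when $r_s=1$ (the blocks at levels $j<s$ are forced since $m_1\cdots m_j$ must divide the level-$(j+1)$ generators, and the fold erases the ambiguous split at the top level), which the paper leaves implicit.
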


\begin{corollary} The exponential generating function for $\mathsf G$, the set of all Gotzmann squarefree monomial ideals, is
\[ 
	g(t) 
	= \sum_{I \in \mathsf G} \frac{t^{\omega(I)}}{\omega(I)!} 
	= e^t \left(\frac{2(1 - t)}{2 - e^t} + t\right)
\]
\begin{proof}
For each Gotzmann squarefree monomial ideal with full support in a polynomial ring over $k$ variables 
there are $\binom{n}{k}$ Gotzmann squarefree monomial ideals in a polynomial ring over $n$ variables with support of size $k$. 
Thus, we apply the inverse binomial transform to the previous proposition (i.e. multiply the e.g.f.~by $e^t$).
\end{proof}
\end{corollary}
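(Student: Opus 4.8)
The plan is to reduce this to Proposition~\ref{p:fullsupportgf} by recording, for each Gotzmann squarefree ideal, its support together with the full-support ideal it determines, and then passing to exponential generating functions.

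First I would make precise the bijection underlying the count. If $I \in \mathsf G$ lies in $\field[x_1,\dots,x_n]$, then by Theorem~\ref{classify} its minimal generators involve only the variables in the set $\mathrm{supp}(I)$, the union of the supports of the monomials $m_j$ and the variables $x_{i_{j,k}}$ appearing in its normal form; this set is well defined. Restricting the ambient ring to $\field[\mathrm{supp}(I)]$ produces an ideal of $\mathsf H$, since by construction it uses every variable present. Conversely, for any $k$-element subset $T \subseteq \{x_1,\dots,x_n\}$ and any ideal of $\mathsf H$ on $k$ variables, relabelling the variables by the elements of $T$ gives a Gotzmann squarefree ideal of $\field[x_1,\dots,x_n]$ with support $T$. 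These two operations are mutually inverse, so if $g_n$ and $h_k$ denote the numbers of elements of $\mathsf G$ and $\mathsf H$ of weight $n$ and $k$ respectively, then
\[
g_n = \sum_{k=0}^{n} \binom{n}{k} h_k .
\]

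Next I would translate this into generating functions: the relation above is exactly the statement that $(g_n)$ is the binomial transform of $(h_n)$, which at the level of exponential generating functions reads $g(t) = e^{t} h(t)$, as one sees by expanding $e^{t}h(t) = \bigl(\sum_k \tfrac{h_k}{k!}t^k\bigr)\bigl(\sum_m \tfrac{1}{m!}t^m\bigr)$ and extracting the coefficient of $t^n/n!$. Substituting $h(t) = \tfrac{2(1-t)}{2-e^t} + t$ from Proposition~\ref{p:fullsupportgf} then gives the claimed formula for $g(t)$.

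The step that needs the most care is verifying that the support decomposition is a genuine weight-refining bijection: one must check that the normal form of Theorem~\ref{classify} really does pin down $\mathrm{supp}(I)$ unambiguously, that restriction to $\field[\mathrm{supp}(I)]$ lands in $\mathsf H$, and that the degenerate ideals (the zero ideal and the unit ideal, both of empty support) are accounted for correctly on both sides --- consistent with the constant terms $g_0 = h_0 = 2$. Everything else is the routine generating-function manipulation above.
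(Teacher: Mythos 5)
Your proposal is correct and follows the paper's proof essentially verbatim: the paper also observes that each full-support Gotzmann squarefree ideal on $k$ variables gives rise to $\binom{n}{k}$ such ideals (with support of size $k$) in $n$ variables, and then multiplies the e.g.f.\ of $\mathsf H$ from Proposition~\ref{p:fullsupportgf} by $e^t$. The only difference is that you spell out the support-restriction bijection and the resulting relation $g_n=\sum_k\binom{n}{k}h_k$, which the paper leaves implicit.
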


From this generating function, one can extract the number of squarefree Gotzmann ideals in a polynomial ring in $n$ generators.
For the first few values of $n$, these numbers are $2,3,6,19,96,669$ (i.e. for $n=0,1,\ldots,5$).

\section{Gotzmann Ideals of the Squarefree Ring}

The problem of classifying all Gotzmann monomial ideals of the
squarefree ring $R$ turns out to be much more difficult. We might
hope to prove some squarefree analog of Lemma \ref{linform}; then,
arguing as in the previous section, we would be able to prove that
Gotzmann ideals of $R$ are lex segments (if generated in one degree) or
initial segments in a lexlike tower (see \cite{MR2253664}) in general.
Unfortunately such an approach is doomed to fail, as the following
examples show.

\begin{example}
The ideal $I=(ab,ac,bd,cd)$ is Gotzmann in $R$ but is not lex.
\end{example}

The ideal $I$ above is (up to symmetry) the only monomial
Gotzmann ideal of $\field[a,b,c,d]/(a^{2},b^{2},c^{2},d^{2})$ which is not
lex in some order.  Thus we might hope that it is the only such ideal,
or at least is the first instance of a one-parameter family of
exceptions.  This hope is dashed as well as soon as we add a fifth
variable.

\begin{example}
The ideal $I=(abc,abd,abe,acd,ace,bcd,bce)$ is Gotzmann in $R$ but
is not lex.
\end{example}

Since the Alexander duals of lex ideals are lex, we might hope that
the Alexander duals of Gotzmann ideals are Gotzmann.  However, the
duals of the two examples above are not Gotzmann.  We will see in
Theorem \ref{dualSurprise} that a Gotzmann ideal has Gotzmann dual if
and only if it is in some sense morally lex.

Throughout the section, all ideals will be monomial ideals of $R$.
Since we no longer work with the polynomial ring, we can dispense with the
notation $I^{\sqf}$ to indicate that an ideal lives in $R$, and will
simply write $I$, $J$, etc.  Many of our arguments are technical, so
for ease of notation we work mostly with monomial vector spaces rather
than ideals.  Recall that a vector space $V\subset R_{d}$ is Gotzmann if
$|\mathbf{m}_{1}V|$ is minimal given $|V|$ and $d$, and that an ideal $I$
is Gotzmann if and only if $|I_{d}|$ is Gotzmann for all $d$.  

\subsection{Decomposing Gotzmann Ideals of $R$}

In this section we show every Gotzmann monomial vector space $V \subseteq R_d$
can be decomposed as the direct sum of two monomial vector spaces which are
Gotzmann in a squarefree ring with one fewer generator. This decomposition relates to the
operation of compression (see \cite{MR2231127} or
\cite{MR2409180}). We begin by recalling the necessary notation.

Given a (fixed) variable $x_i$, let $\mathbf{n} = (x_1, \ldots, \hat x_i, \ldots, x_n)$
be the maximal ideal in $Q = R/(x_i)$ which is a squarefree ring on $n-1$ variables.

\begin{definition}[$x_i$-decomposition]
Let $V \subseteq R_d$ be a monomial vector space and fix a variable $x_i$.
The monomial basis of $V$ can be partitioned as $A \cup B$ where
$A$ contains the monomials divisible by $x_i$ and $B$ contains those not divisible by $x_i$.

Let $\Vhatxi{}$ be the monomial vector space spanned by $B$ and let $\Vxi$ be
the monomial vector space spanned by $\{ m \mid x_i m \in A\}$. 
We write $V$ as the direct sum
\[	V = \Vhatxi{} \oplus x_i \Vxi
\]
which we call the \emph{$x_i$-decomposition} of $V$.
\end{definition}

We view the monomial vector spaces $\Vhatxi{}$ and $\Vxi$
as subspaces of $Q_d$ and $Q_{d-1}$ respectively.

\begin{definition}[$x_i$-compression]
Let $V = \Vhatxi{} \oplus x_i \Vxi$ be the $x_i$-decomposition of
the monomial vector space $V$. Let $\Lhatxi{}$ and $\Lxi{}$ 
be the squarefree lex-segments in $Q$ with the same degrees and dimensions as $\Vhatxi{}$ and $\Vxi$. The \emph{$x_i$-compression} of $V$ is the monomial vector space
\[ T= \Lhatxi{} \oplus x_i \Lxi{}.
\]
\end{definition}

We recall the following important fact about compressions from \cite{MR2231127}:
\begin{proposition}[\cite{MR2231127}]\label{compressionIsIdeal}
If $T$ is the $x_i$-compression of the monomial vector space $V \subseteq R_d$,
then 
\[	|\mathbf{m}_1 T|\leq |\mathbf{m}_1 V|.
\]
\end{proposition}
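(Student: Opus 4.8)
The plan is to decompose both $|\mathbf{m}_1 V|$ and $|\mathbf{m}_1 T|$ according to divisibility by $x_i$ and then compare the resulting pieces using the Kruskal--Katona theorem inside the $(n-1)$-variable squarefree ring $Q$. Throughout I write $\mathbf{n}_1$ for the analogue in $Q$ of the operator $\mathbf{m}_1$.

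First I would compute the $x_i$-decomposition of $\mathbf{m}_1 V$. Recall that $\gens(V)=\gens(\Vhatxi)\sqcup\{x_i m : m\in\gens(\Vxi)\}$. Since $x_i^2=0$ in $R$, a degree $d+1$ monomial of $\mathbf{m}_1 V$ not divisible by $x_i$ must be a nonzero product $x_j g$ with $j\neq i$ and $g\in\gens(\Vhatxi)$, so these span $\mathbf{n}_1\Vhatxi\subseteq Q_{d+1}$. A degree $d+1$ monomial of $\mathbf{m}_1 V$ divisible by $x_i$ is either $x_i g$ with $g\in\gens(\Vhatxi)$, or $x_i(x_j m)$ with $j\neq i$ and $m\in\gens(\Vxi)$; dividing out $x_i$, these correspond exactly to the monomials of $\gens(\Vhatxi)\cup\gens(\mathbf{n}_1\Vxi)$, so they span $x_i(\Vhatxi+\mathbf{n}_1\Vxi)$ with $\Vhatxi+\mathbf{n}_1\Vxi\subseteq Q_d$. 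Hence
\[ |\mathbf{m}_1 V| = |\mathbf{n}_1 \Vhatxi| + |\Vhatxi + \mathbf{n}_1 \Vxi|, \]
and the identical computation applied to $T=\Lhatxi\oplus x_i\Lxi$ gives $|\mathbf{m}_1 T| = |\mathbf{n}_1 \Lhatxi| + |\Lhatxi + \mathbf{n}_1 \Lxi|$.

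Next I would bound the two summands separately. Because $Q$ is a squarefree ring and $\Lhatxi$, $\Lxi$ are the lex segments with the same degrees and dimensions as $\Vhatxi$, $\Vxi$, the fact that lex segments are Gotzmann gives $|\mathbf{n}_1\Lhatxi|\leq|\mathbf{n}_1\Vhatxi|$ and $|\mathbf{n}_1\Lxi|\leq|\mathbf{n}_1\Vxi|$. For the second summand I would invoke the classical fact that the upper shadow $\mathbf{n}_1\Lxi$ of a squarefree lex segment is again a squarefree lex segment; then $\Lhatxi$ and $\mathbf{n}_1\Lxi$ are both lex segments in $Q_d$, hence one contains the other, so
\begin{align*}
 |\Lhatxi + \mathbf{n}_1 \Lxi| &= \max\bigl(|\Lhatxi|,\, |\mathbf{n}_1 \Lxi|\bigr) = \max\bigl(|\Vhatxi|,\, |\mathbf{n}_1 \Lxi|\bigr) \\
 &\leq \max\bigl(|\Vhatxi|,\, |\mathbf{n}_1 \Vxi|\bigr) \leq |\Vhatxi + \mathbf{n}_1 \Vxi|.
\end{align*}
Adding this to $|\mathbf{n}_1\Lhatxi|\leq|\mathbf{n}_1\Vhatxi|$ yields $|\mathbf{m}_1 T|\leq|\mathbf{m}_1 V|$.

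I expect the main obstacle to be purely bookkeeping: getting the identity $|\mathbf{m}_1 V|=|\mathbf{n}_1\Vhatxi|+|\Vhatxi+\mathbf{n}_1\Vxi|$ (and its analogue for $T$) exactly right, carefully tracking which products vanish because $x_i^2=0$ or because $x_j\mid g$. Once these identities are in place, the inequality is an immediate consequence of Kruskal--Katona applied in $Q$ together with the observation that distinct lex segments of a fixed degree are nested. It is also worth stating explicitly that the shadow of a squarefree lex segment is a squarefree lex segment, since here it is used in $Q$ rather than in $R$.
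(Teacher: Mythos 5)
Your proof is correct, and it is the standard argument; note that the paper itself does not prove this proposition but simply cites it to Mermin--Peeva, so there is no paper proof to compare against line by line. A few remarks worth making. Your decomposition identity
\[ |\mathbf{m}_1 V| = |\mathbf{n}_1 \Vhatxi| + |\Vhatxi + \mathbf{n}_1 \Vxi| \]
is exactly the content of the paper's Lemma~\ref{decompGrowth}, which is stated and proved a few lines below the proposition (the paper derives it via $\mathbf{m}_1(\Vhatxi{} + x_i \Vxi) = \mathbf{n}_1 \Vhatxi{} + x_i \Vhatxi{} + x_i \mathbf{n}_1 \Vxi$ using $x_i^2=0$; your bookkeeping with generators is an equivalent route). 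You could therefore cite that lemma instead of re-deriving it. The remaining steps --- Kruskal--Katona in $Q$ giving $|\mathbf{n}_1 \Lhatxi| \le |\mathbf{n}_1 \Vhatxi|$ and $|\mathbf{n}_1 \Lxi| \le |\mathbf{n}_1 \Vxi|$, the fact that the shadow of a lex segment is lex, and the nestedness of two lex segments of the same degree reducing $|\Lhatxi + \mathbf{n}_1 \Lxi|$ to a maximum --- are all correct, and your chain of inequalities closes the argument. These are also precisely the devices the paper itself uses (with $\star$ and the containment of lex segments) in the proposition immediately following, so your proof sits well alongside the paper's methods.
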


\begin{lemma} \label{decompGrowth}
If $V = \Vhatxi{} \oplus \Vxi$ then the $x_i$-decomposition of
$\mathbf{m}_{1}V$ is
\[ \mathbf{m}_1 V = \mathbf{n}_1 \Vhatxi{} \oplus x_i (\Vhatxi{}+\mathbf{n}_1 \Vxi).
\]
\begin{proof}
Since $x_{i}^{2}=0$, we have $\mathbf{m}_1 (x_i \Vxi) = \mathbf{n}_1(x_i \Vxi)$.
Thus,
\begin{align*}
	 \mathbf{m}_1 V 
	&= \mathbf{m}_1 (\Vhatxi{} + x_i \Vxi) \\
	&= \mathbf{n}_1 \Vhatxi{} + x_i \Vhatxi{} + x_i \mathbf{n}_1 \Vxi  \\
	&= \mathbf{n}_1 \Vhatxi{} \oplus x_i (\Vhatxi{} + \mathbf{n}_1 \Vxi). 
\end{align*}
This sum is direct since the second summand is contained in $(x_{i})$
while the first summand is not.
\end{proof}
\end{lemma}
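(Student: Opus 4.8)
The plan is to compute $\mathbf{m}_1 V$ directly from the $x_i$-decomposition $V = \Vhatxi{} \oplus x_i\Vxi$ and then recognize the answer as a direct sum whose two summands are exactly the $x_i$-divisible and $x_i$-indivisible parts of a monomial basis. First I would split $\mathbf{m}_1 V = \mathbf{m}_1\Vhatxi{} + \mathbf{m}_1(x_i\Vxi)$. For the second term, since $x_i^2 = 0$ in $R$, multiplying a monomial of $x_i\Vxi$ by $x_i$ kills it, so only the variables other than $x_i$ contribute; hence $\mathbf{m}_1(x_i\Vxi) = x_i\,\mathbf{n}_1\Vxi$. For the first term, I would separate out the contribution of $x_i$ from the other variables to write $\mathbf{m}_1\Vhatxi{} = \mathbf{n}_1\Vhatxi{} + x_i\Vhatxi{}$; here it is used that every monomial in $\gens(\Vhatxi{})$ is squarefree and not divisible by $x_i$, so $x_j m$ is $x_i$-divisible precisely when $j = i$.

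Combining the two computations gives
\[
\mathbf{m}_1 V = \mathbf{n}_1\Vhatxi{} + x_i\Vhatxi{} + x_i\,\mathbf{n}_1\Vxi
= \mathbf{n}_1\Vhatxi{} + x_i\bigl(\Vhatxi{} + \mathbf{n}_1\Vxi\bigr),
\]
which is the claimed expression once directness is established. I would argue the sum is direct exactly as the paper indicates the general principle: $\mathbf{n}_1\Vhatxi{}$ is a monomial vector space in $Q_{d+1}$ none of whose basis monomials is divisible by $x_i$, whereas $x_i(\Vhatxi{} + \mathbf{n}_1\Vxi)$ lies in the ideal $(x_i)$; two such monomial vector spaces intersect trivially.

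Finally I would check that this direct sum is genuinely the $x_i$-decomposition of $\mathbf{m}_1 V$, not merely some direct-sum splitting. This amounts to observing that $\Vhatxi{} + \mathbf{n}_1\Vxi$ is a monomial vector space in $Q_d$ (a sum of monomial vector spaces is again one, with monomial basis the union of the generating monomials), so that $\mathbf{n}_1\Vhatxi{}$ collects the basis monomials of $\mathbf{m}_1 V$ not divisible by $x_i$ and $x_i(\Vhatxi{} + \mathbf{n}_1\Vxi)$ collects those divisible by $x_i$ after factoring out $x_i$ — precisely the partition in the definition of $x_i$-decomposition.

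I do not expect a serious obstacle: the only point requiring care is the use of $x_i^2 = 0$ to collapse $\mathbf{m}_1(x_i\Vxi)$ to $x_i\mathbf{n}_1\Vxi$, and the bookkeeping that the resulting summands match the monomial-basis partition defining the $x_i$-decomposition rather than being an arbitrary decomposition. Everything else is a short formal manipulation inside $R_{d+1}$.
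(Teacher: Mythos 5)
Your proof is correct and follows the same route as the paper's: use $x_i^2=0$ to collapse $\mathbf{m}_1(x_i\Vxi)$ to $x_i\mathbf{n}_1\Vxi$, split $\mathbf{m}_1\Vhatxi{}$ into its $x_i$-free and $x_i$-multiple parts, and observe directness because one summand lies inside $(x_i)$ and the other does not. Your extra check that the resulting splitting really is the $x_i$-decomposition (and not merely some direct sum) is a worthwhile clarification but not a different argument.
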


\begin{proposition}
Let $V \subseteq R_d$ be a Gotzmann monomial vector space and let
$V = \Vhatxi{} \oplus x_i \Vxi$ be its $x_i$-decomposition.
Then $\Vhatxi{}$ is Gotzmann in $Q$.
\begin{proof}
Let $L$ be the $x_i$-compression of $V$. As $V$ is Gotzmann $|\mathbf{m}_1 V| \leq |\mathbf{m}_1 L|$
and so $|\mathbf{m}_1 V| = |\mathbf{m}_1 L|$ by Proposition
\ref{compressionIsIdeal}.  

Thus we have
\begin{equation*} 
	  |\mathbf{n}_1 \Vhatxi{}| + |\Vhatxi{}+\mathbf{n}_1 \Vxi| 
	= |\mathbf{n}_1 \Lhatxi{}| + |\Lhatxi{}+\mathbf{n}_1 \Lxi{}| \tag{$\star$} \label{growthEquality}
\end{equation*}
from the previous lemma.

Since $\Lxi{}$ and $\mathbf{n}_1 \Lhatxi{} $ are lex segments of the same degree, it follows that one contains
in the other. If $\mathbf{n}_1 \Lxi{} \subseteq \Lhatxi{}$ then
\[
	|\Lhatxi{}+\mathbf{n}_1 \Lxi{}| 
	= | \Lhatxi{} | 
	= | \Vhatxi{} |
	\leq | \Vhatxi{} + \mathbf{n}_1 \Vxi|.
\]
Similarly, if $\Lhatxi{} \subseteq \mathbf{n}_1 \Lxi{}$ then
\[
	|\Lhatxi{}+\mathbf{n}_1 \Lxi{}| 
	= |\mathbf{n}_1 \Lxi{}|  
	\leq |\mathbf{n}_1 \Vxi| 
	\leq | \Vhatxi{} + \mathbf{n}_1 \Vxi|.
\]

In both cases $|\Lhatxi{}+\mathbf{n}_1 \Lxi{}| \leq | \Vhatxi{} + \mathbf{n}_1 \Vxi|$.
From the equality above we see that $|\mathbf{n}_1 \Vhatxi{}| \leq |\mathbf{n}_1 \Lhatxi{}|$ and hence $\Vhatxi{}$ is Gotzmann by Proposition \ref{gotzByLexSeg}.
\end{proof}
\end{proposition}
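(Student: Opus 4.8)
The plan is a compression argument. I would introduce $T=\Lhatxi{}\oplus x_i\Lxi{}$, the $x_i$-compression of $V$. Since $\Lhatxi{}$ and $\Lxi{}$ have the same dimensions as $\Vhatxi{}$ and $\Vxi$, the space $T$ has the same dimension as $V$, so the hypothesis that $V$ is Gotzmann yields $|\mathbf{m}_1 V|\le|\mathbf{m}_1 T|$, while Proposition \ref{compressionIsIdeal} gives $|\mathbf{m}_1 T|\le|\mathbf{m}_1 V|$. Hence $|\mathbf{m}_1 V|=|\mathbf{m}_1 T|$. Applying Lemma \ref{decompGrowth} to both $V$ and $T$ (the $x_i$-decomposition of $T$ is literally $\Lhatxi{}\oplus x_i\Lxi{}$) rewrites this as the single numerical identity
\[ |\mathbf{n}_1\Vhatxi{}| + |\Vhatxi{}+\mathbf{n}_1\Vxi| = |\mathbf{n}_1\Lhatxi{}| + |\Lhatxi{}+\mathbf{n}_1\Lxi{}|. \]

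From here the target is $|\mathbf{n}_1\Vhatxi{}|\le|\mathbf{n}_1\Lhatxi{}|$: since $\Lhatxi{}$ is by construction the lex segment of $Q_d$ of dimension $\dim\Vhatxi{}$, Kruskal--Katona then forces equality, and $\Vhatxi{}$ is Gotzmann in $Q$ by Proposition \ref{gotzByLexSeg}. By the identity above it is enough to show $|\Lhatxi{}+\mathbf{n}_1\Lxi{}|\le|\Vhatxi{}+\mathbf{n}_1\Vxi|$. For this I would use that $\Lhatxi{}$ and $\mathbf{n}_1\Lxi{}$ are both lex segments of $Q_d$ --- the latter because $\mathbf{n}_1$ of a lex segment is again a lex segment --- so one contains the other, and split into two cases. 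If $\mathbf{n}_1\Lxi{}\subseteq\Lhatxi{}$, then $|\Lhatxi{}+\mathbf{n}_1\Lxi{}|=|\Lhatxi{}|=|\Vhatxi{}|\le|\Vhatxi{}+\mathbf{n}_1\Vxi|$. If $\Lhatxi{}\subseteq\mathbf{n}_1\Lxi{}$, then $|\Lhatxi{}+\mathbf{n}_1\Lxi{}|=|\mathbf{n}_1\Lxi{}|$, and this is $\le|\mathbf{n}_1\Vxi|$ because the lex segment $\Lxi{}$ is Gotzmann in $Q$ and so has minimal growth among spaces of its dimension, while $|\mathbf{n}_1\Vxi|\le|\Vhatxi{}+\mathbf{n}_1\Vxi|$.

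The main obstacle is squeezing an equality out of the compression inequality --- that $|\mathbf{m}_1 V|=|\mathbf{m}_1 T|$ rather than merely $\le$ --- together with the observation that the two degree $d$ lex segments $\Lhatxi{}$ and $\mathbf{n}_1\Lxi{}$ are comparable; once those are set up, the case analysis and the concluding appeal to Kruskal--Katona and Proposition \ref{gotzByLexSeg} are routine bookkeeping via Lemma \ref{decompGrowth}.
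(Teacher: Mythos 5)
Your proposal is correct and follows essentially the same compression argument as the paper: force equality $|\mathbf{m}_1 V|=|\mathbf{m}_1 T|$ via Proposition \ref{compressionIsIdeal}, decompose both sides with Lemma \ref{decompGrowth}, and split on which of the comparable lex segments $\Lhatxi{}$ and $\mathbf{n}_1\Lxi{}$ contains the other. You have in fact stated the comparability of the two degree-$d$ lex segments more carefully than the paper's text does.
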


\idiot{The old proof is as follows:

By construction, $|\mathbf{n}_1 \Vhatxi{}| \geq |\mathbf{n}_1 \Lhatxi{}|$, and so
\begin{align*}
|\mathbf{n}_1 \Vhatxi{} + \Vxi | & \leq |\mathbf{n}_1 \Lhatxi{} + \Lxi{} |\\
|\mathbf{n}_1 \Vhatxi{}| + |\Vxi| - |\mathbf{n}_1 \Vhatxi{} \cap  \Vxi | & \leq |\mathbf{n}_1 \Lhatxi{} |+| \Lxi{} |-|\mathbf{n}_1 \Lhatxi{} \cap \Lxi{} |\\
|\mathbf{n}_1 \Vhatxi{}| - |\mathbf{n}_1 \Vhatxi{} \cap  \Vxi | & \leq |\mathbf{n}_1 \Lhatxi{} |-|\mathbf{n}_1 \Lhatxi{} \cap  \Lxi{} |.
\end{align*}

Since $\Lxi{}$ and $\mathbf{n}_1 \Lhatxi{} $ are lex, it follows that one is contained
in the other.  If $\mathbf{n}_1 \Lhatxi{} \subset  \Lxi{} $, then the right-hand side
above is zero, and we conclude that $\mathbf{n}_1 \Vhatxi{} \subset  \Vxi $ and
$|\mathbf{n}_1  \Vxi |=|\mathbf{n}_1  \Lxi{} |$.  If 
$ \Lxi{} \subset \mathbf{n}_1 \Lhatxi{} $, then the right-hand side above is
$|\mathbf{n}_1 \Lhatxi{} |-| \Lxi{} |$ and we have

\begin{align*}
|\mathbf{n}_1 \Vhatxi{} |-| \Lxi{} |&= |\mathbf{n}_1 \Vhatxi{} |-| \Vxi |\\
&\leq |\mathbf{n}_1 \Vhatxi{} |-|\mathbf{n}_1 \Vhatxi{}  \cap  \Vxi |\\
&\leq |\mathbf{n}_1 \Lhatxi{} |-|\mathbf{n}_1 \Lhatxi{}  \cap  \Lxi{} |\\
&=|\mathbf{n}_1 \Lhatxi{} |-| \Lxi{} |,
\end{align*}
i.e., $|\mathbf{n}_1 \Vhatxi{}| \leq |\mathbf{n}_1 \Lhatxi{} |$.  But $\Lhatxi{} $ is lex, so we conclude that
these are equal and in particular all the inequalities above are
equalities.  
}

\begin{lemma}\label{almostThere}
Let $V$ be Gotzmann in $R$ with $x_{i}$-decomposition $V = \Vhatxi{} \oplus x_i \Vxi$
and let $L = \Lhatxi{} \oplus x_i \Lxi{}$ be its $x_i$-compression.
Then either $\Vxi$ is Gotzmann in $Q$ or $\mathbf{n}_1 \Lxi{} \subset
\Lhatxi{}$. 
\begin{proof}
We know from the previous proposition that $\Vhatxi{}$ is Gotzmann in $Q$ and hence
$|\mathbf{n}_1 \Vhatxi{}| = |\mathbf{n}_1 \Lhatxi{}|$. Thus, the equality \eqref{growthEquality}
gives
\[
	  |\Vhatxi{}+\mathbf{n}_1 \Vxi| 
	= |\Lhatxi{}+\mathbf{n}_1 \Lxi{}|.
\]
If $\mathbf{n}_1 \Lxi{} \not \subset \Lhatxi{}$ then $\Lhatxi{} \subseteq \mathbf{n}_1 \Lxi{}$ as they are both lex segments. Thus 
\[ |\mathbf{n}_1 \Vxi | 
	\leq |\Vhatxi{} + \mathbf{n}_1 \Vxi| 
	= |\Lhatxi{}+\mathbf{n}_1 \Lxi{}|
	= |\mathbf{n}_1 \Lxi{}|
\]
which proves that $\Vxi$ is Gotzmann.
\end{proof}
\end{lemma}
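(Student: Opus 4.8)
The plan is to run the same kind of computation that proved $\Vhatxi{}$ is Gotzmann, but now pushing the equality all the way through to obtain information about $\Vxi$. The starting point is the growth identity $(\star)$ from Lemma \ref{decompGrowth}, together with the fact (established in the preceding proposition) that $V$ is Gotzmann forces $|\mathbf{m}_1 V| = |\mathbf{m}_1 L|$, so $(\star)$ is a genuine equality and not just an inequality. Since we now know $\Vhatxi{}$ is Gotzmann in $Q$, we have $|\mathbf{n}_1 \Vhatxi{}| = |\mathbf{n}_1 \Lhatxi{}|$, and these two terms cancel from both sides of $(\star)$. What survives is exactly
\[
	|\Vhatxi{} + \mathbf{n}_1 \Vxi| = |\Lhatxi{} + \mathbf{n}_1 \Lxi{}|.
\]

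Next I would do the dichotomy on the two lex segments $\Lhatxi{}$ and $\mathbf{n}_1 \Lxi{}$ (of the same degree in $Q$), one of which contains the other. The hypothesis of the lemma is precisely that we are not in the case $\mathbf{n}_1 \Lxi{} \subseteq \Lhatxi{}$, so we are in the case $\Lhatxi{} \subseteq \mathbf{n}_1 \Lxi{}$. In that case the right-hand side collapses: $|\Lhatxi{} + \mathbf{n}_1 \Lxi{}| = |\mathbf{n}_1 \Lxi{}|$. Combining with the displayed equality and the trivial inclusion $\mathbf{n}_1 \Vxi \subseteq \Vhatxi{} + \mathbf{n}_1 \Vxi$, we get
\[
	|\mathbf{n}_1 \Vxi| \le |\Vhatxi{} + \mathbf{n}_1 \Vxi| = |\mathbf{n}_1 \Lxi{}|.
\]
Now $\Lxi{}$ is by construction the squarefree lex segment in $Q$ of the same dimension as $\Vxi$, so $|\mathbf{n}_1 \Lxi{}|$ is the minimum possible value of $\mathbf{n}_1$-growth for a space of that dimension; the inequality above therefore forces equality, and by Proposition \ref{gotzByLexSeg} this says exactly that $\Vxi$ is Gotzmann in $Q$. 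That completes the argument.

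There is really only one subtle point to be careful about, and it is the main thing to get right: that $(\star)$ is an \emph{equality}. This rests on Proposition \ref{compressionIsIdeal} (so $|\mathbf{m}_1 L| \le |\mathbf{m}_1 V|$) combined with $V$ being Gotzmann (so $|\mathbf{m}_1 V| \le |\mathbf{m}_1 L|$, since $|V| = |L|$ — the $x_i$-compression preserves dimension block-by-block); this is the same observation already used in the preceding proposition, so nothing new is needed. Everything else is bookkeeping with dimensions of sums of monomial vector spaces, and the statement "$\Lhatxi{} \subseteq \mathbf{n}_1 \Lxi{}$ when $\mathbf{n}_1\Lxi{} \not\subseteq \Lhatxi{}$" is just the total ordering of lex segments of a fixed degree. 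So I expect no real obstacle — the lemma is essentially the contrapositive-flavored squeeze left over from the proof that $\Vhatxi{}$ is Gotzmann.
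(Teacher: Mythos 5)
Your proposal is correct and follows essentially the same route as the paper: start from the equality $(\star)$, use that $\Vhatxi{}$ is Gotzmann to cancel $|\mathbf{n}_1 \Vhatxi{}| = |\mathbf{n}_1 \Lhatxi{}|$, then in the case $\Lhatxi{} \subseteq \mathbf{n}_1\Lxi{}$ collapse the right-hand side and squeeze $|\mathbf{n}_1\Vxi| \le |\mathbf{n}_1\Lxi{}|$, which forces equality. The only thing you add beyond the paper's wording is an explicit reminder of why $(\star)$ is an equality rather than an inequality; the paper leaves that to the preceding proposition, but it is the same observation.
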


If $\mathbf{n}_1 \Lxi{} \subset \Lhatxi{}$, then $\Vxi$ need not be Gotzmann. For
example, 
\[ V=\spn_{\field}\{abc, abd, acd, bcd, bce, bde, cde\}
\] 
is Gotzmann in $R = \field[a,b,c,d,e]/(a^2, \ldots, e^2)$, 
but $V_{1}=\spn_{\field}\{bc, bd, cd\}$ from the $a$-decomposition of $V$ is not Gotzmann in $Q = R/(a)$.

However, we will see that it is always possible to choose $x_i$ such that $\Vxi$ is
Gotzmann.

\idiot{I think this becomes unnecessary
\begin{lemma}
For a given variable $x_i$, let $V = \Vhatxi{} \oplus x_i \Vxi$ be the $x_i$-decomposition of the monomial vector space $V \subseteq R$ and let $L = \Lhatxi{} \oplus x_i \Lxi{}$ be its $x_i$-compression. 
If $x_i$ is a variable with $\Lhatxi{} \subseteq \mathbf{n}_1 \Lxi{}$, then any variable $x_j$ with $|V_{x_j}|\geq |\Vxi|$ also satisfies $L_{\hat x_j} \subseteq \mathbf{n'}_1 L_{x_j}$ where $\mathbf{n'} = (x_1, \ldots, \hat x_j, \ldots, x_n) \subset R/(x_j)$.
\begin{proof}
Assume $x_i$ and $x_j$ are as given in the statement of the lemma.
As $|V_{x_j}|\geq |\Vxi|$, we have
$|L_{x_j}| \geq |\Lxi{}|$ since 
$|L_{x_j}| = |V_{x_j}|$ and $|\Lxi{}| = |\Vxi|$.

We also have $|L_{\hat x_j}| \leq |\Lhatxi{}|$
since $|L_{\hat x_j}| = |V_{\hat x_j}|$ and 
$|V_{\hat x_j}| = |V|- |V_{x_j}|$ and similarly for $x_i$.

As $|\Lxi{}| \leq |L_{x_j}|$, it is clear that
$|\mathbf{n}_1 \Lxi{}| \leq |\mathbf{n'}_1 L_{x_j}|$
as we are simply comparing the growth of two nested lex segments in $n-1$ variables 
(albeit, differently named variables). Thus,
\[ 
|L_{\hat x_j}|
\leq |\Lhatxi{}|
\leq |\mathbf{n}_1 \Lxi{}|
\leq |\mathbf{n'}_1 L_{x_j}|
\]
Since $L_{\hat x_j}$ and $\mathbf{n'}_1 L_{x_j}$ are lex segments, we get the desired containment.
\end{proof}
\end{lemma}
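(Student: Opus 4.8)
The plan is to deduce the containment $L_{\hat x_j} \subseteq \mathbf{n'}_1 L_{x_j}$ from the single chain of dimension inequalities
\[
|L_{\hat x_j}| \;\le\; |\Lhatxi{}| \;\le\; |\mathbf{n}_1 \Lxi{}| \;\le\; |\mathbf{n'}_1 L_{x_j}|.
\]
Once this is in hand the lemma is immediate: $L_{\hat x_j}$ is a degree $d$ lex segment of $R/(x_j)$ by the definition of $x_j$-compression, while $\mathbf{n'}_1 L_{x_j}$ is the degree $d$ component of the ideal generated by a lex segment and hence is itself a lex segment; two lex segments of the same degree are comparable, so the inequality of dimensions forces the desired containment.

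First I would record the bookkeeping identities that follow directly from the definitions of decomposition and compression: $|\Lxi{}| = |\Vxi|$, $|L_{x_j}| = |V_{x_j}|$, $|\Lhatxi{}| = |\Vhatxi{}|$, and $|L_{\hat x_j}| = |V_{\hat x_j}|$; and, since multiplication by a variable is injective on the span of the squarefree monomials avoiding it, $|V| = |\Vhatxi{}| + |\Vxi| = |V_{\hat x_j}| + |V_{x_j}|$ for both decompositions. The key observation is that $|V|$ does not depend on the variable along which one decomposes, so the hypothesis $|V_{x_j}| \ge |\Vxi|$ is equivalent to $|V_{\hat x_j}| \le |\Vhatxi{}|$, which is precisely the leftmost inequality $|L_{\hat x_j}| \le |\Lhatxi{}|$. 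The middle inequality $|\Lhatxi{}| \le |\mathbf{n}_1 \Lxi{}|$ is merely the hypothesis $\Lhatxi{} \subseteq \mathbf{n}_1 \Lxi{}$ read in dimensions.

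For the rightmost inequality I would identify $Q = R/(x_i)$ with $R/(x_j)$ as squarefree rings on $n-1$ variables by the order-preserving relabeling of the remaining variables; this is a graded isomorphism, so it carries lex segments to lex segments of the same degree and dimension. Under it $\Lxi{}$ and $L_{x_j}$ are lex segments of degree $d-1$ with $|\Lxi{}| = |\Vxi| \le |V_{x_j}| = |L_{x_j}|$, hence the first lies inside the second; since a larger lex segment has larger growth, $|\mathbf{n}_1 \Lxi{}| \le |\mathbf{n'}_1 L_{x_j}|$. Concatenating the three inequalities completes the proof.

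I expect no serious obstacle: the statement is elementary dimension counting once the notation is unwound, the conceptual content being only the observation that $\dim V$ is independent of the chosen decomposition. The one point needing care is the passage between the two quotient rings $R/(x_i)$ and $R/(x_j)$, where one must make sure that ``lex segment'' and ``growth'' transport correctly across the relabeling; beyond that, it is simply a matter of keeping the direction of each inequality straight.
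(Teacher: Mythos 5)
Your proof is correct and follows essentially the same route as the paper's: establish the chain $|L_{\hat x_j}| \le |\Lhatxi{}| \le |\mathbf{n}_1 \Lxi{}| \le |\mathbf{n'}_1 L_{x_j}|$ by reading the hypothesis in dimensions, using the decomposition-independence of $|V|$ for the left inequality, and comparing nested lex segments (after relabeling variables) for the right, then conclude by comparability of lex segments. You are somewhat more explicit than the paper about the graded isomorphism $R/(x_i) \cong R/(x_j)$, but the underlying argument is identical.
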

}

\begin{lemma}\label{exchange}
Let $V$ be Gotzmann with $x_{i}$-decomposition $V=\Vone\oplus
x_{i}\Vxi$ and compression $L=\Lone\oplus x_{i}\Lxi$.  If
$\mathbf{n}_{1}\Lxi\subseteq\Lone$, then $V$ satisfies the 
property:  
\begin{quotation}
Let $m\in V$ be a monomial such that $x_{i}$ divides $m$, and let
$x_{j}$ be any variable not dividing $m$.  Then
$\frac{x_{j}}{x_{i}}m\in V$.
\end{quotation}
\end{lemma}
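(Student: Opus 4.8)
The plan is to recognize that the displayed property is precisely the vector-space containment $\mathbf{n}_{1}\Vxi\subseteq\Vone$, and then to extract this containment from the dimension count \eqref{growthEquality} that already appeared in the proofs above.

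First I would unwind the property. Let $m\in V$ be a monomial divisible by $x_{i}$. Since $V$ is a monomial vector space, $m$ is one of its basis monomials, and as $x_{i}\mid m$ the monomial $m':=m/x_{i}$ belongs to $\gens(\Vxi)$. If $x_{j}$ is a variable not dividing $m$ then $x_{j}\ne x_{i}$ and $x_{j}\nmid m'$, so $x_{j}m'$ is a nonzero squarefree monomial not divisible by $x_{i}$, and $\frac{x_{j}}{x_{i}}m = x_{j}m'$. Hence $\frac{x_{j}}{x_{i}}m\in V$ if and only if $x_{j}m'\in\Vone$. Letting $m$ and $x_{j}$ range over all admissible choices, and noting that the only nonzero monomials occurring in $\mathbf{n}_{1}\Vxi$ are exactly the $x_{j}m'$ with $m'\in\gens(\Vxi)$ and $x_{j}\nmid m'$ (the products $x_{j}m'$ with $x_{j}\mid m'$ vanishing since $x_{j}^{2}=0$), the property is equivalent to the inclusion $\mathbf{n}_{1}\Vxi\subseteq\Vone$.

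It then remains to prove $\mathbf{n}_{1}\Vxi\subseteq\Vone$. We have already shown that $\Vone$ is Gotzmann in $Q$, so $|\mathbf{n}_{1}\Vone| = |\mathbf{n}_{1}\Lone|$ by Proposition \ref{gotzByLexSeg}. Substituting this into \eqref{growthEquality} yields $|\Vone+\mathbf{n}_{1}\Vxi| = |\Lone+\mathbf{n}_{1}\Lxi|$. By hypothesis $\mathbf{n}_{1}\Lxi\subseteq\Lone$, so $\Lone+\mathbf{n}_{1}\Lxi = \Lone$ and the right-hand side is $|\Lone| = |\Vone|$. Therefore $|\Vone+\mathbf{n}_{1}\Vxi| = |\Vone|$, and since $\Vone\subseteq\Vone+\mathbf{n}_{1}\Vxi$ this forces $\mathbf{n}_{1}\Vxi\subseteq\Vone$.

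I do not expect a real obstacle here: all the work has already been done in \eqref{growthEquality}. The only step that needs attention is the first one — checking that the combinatorial exchange condition really is the inclusion $\mathbf{n}_{1}\Vxi\subseteq\Vone$, which requires being slightly careful with the relations $x_{j}^{2}=0$ in $R$ so that no spurious zero terms are counted.
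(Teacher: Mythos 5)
Your proposal is correct and is essentially the same argument the paper gives: derive $\mathbf{n}_{1}\Vxi\subseteq\Vone$ from the equality \eqref{growthEquality} together with the hypothesis $\mathbf{n}_{1}\Lxi\subseteq\Lone$, then observe this containment is exactly the stated exchange property. The paper's version is terser (it silently cancels $|\mathbf{n}_{1}\Vone|=|\mathbf{n}_{1}\Lone|$ and compresses the final unwinding into ``the desired property follows''), whereas you make both the cancellation via the earlier proposition that $\Vone$ is Gotzmann and the translation of the inclusion into the combinatorial statement explicit, which is the right level of care.
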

\begin{proof}  
Applying \eqref{growthEquality},
  we have
  $|\mathbf{n}_{1}\Vxi+\Vone|=|\mathbf{n}_{1}\Lxi+\Lone|=|\Lone|=|\Vone|$, i.e.,
 $\mathbf{n}_{1}\Vxi\subseteq\Vone$.  The desired property
  follows.
\end{proof}

\idiot{Actually, this isn't necessary either.
\begin{proposition} Let $V$ be a Gotzmann monomial vector space $V$. 
Then there exists some variable $x_i$ such that $\Lhatxi{} \subseteq
\mathbf{n}_1 \Lxi{}$  
where $\Lhatxi{} \oplus x_i \Lxi{}$ is the $x_i$-compression of $V$.
\end{proposition}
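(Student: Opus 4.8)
The plan is to derive the proposition from Lemma~\ref{exchange} by a short contradiction argument, using the fact that any two squarefree monomials of the same degree are linked by a sequence of single-variable exchanges. We may assume $V \neq 0$, since otherwise $\Lhatxi = 0$ for every $x_i$ and there is nothing to prove; and we may assume $d \geq 1$, the degree-$0$ case being that of the unit ideal, which is excluded.

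For each variable $x_i$ write the $x_i$-compression of $V$ as $L = \Lhatxi \oplus x_i \Lxi$. Since $\Lhatxi$ and $\mathbf{n}_1 \Lxi$ are lex segments of the same degree in $Q$, one contains the other, so the desired conclusion $\Lhatxi \subseteq \mathbf{n}_1 \Lxi$ fails for $x_i$ exactly when $\mathbf{n}_1 \Lxi \subsetneq \Lhatxi$. I would therefore assume, for contradiction, that $\mathbf{n}_1 \Lxi \subsetneq \Lhatxi$ for \emph{every} variable $x_i$, and deduce from this that $V$ is the whole of $R_d$.

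Under this assumption $\mathbf{n}_1 \Lxi \subseteq \Lhatxi$ for every $i$, so Lemma~\ref{exchange} shows that $V$ has the exchange property at \emph{every} variable: if a monomial $m \in V$ is divisible by $x_i$ and $x_j \nmid m$, then $\frac{x_j}{x_i} m \in V$. Fix a monomial $m_0 \in \gens(V)$. For any squarefree monomial $\mu$ of degree $d$ there is a chain of squarefree degree-$d$ monomials $m_0 = \mu_0, \mu_1, \dots, \mu_k = \mu$ in which each $\mu_{t+1}$ is obtained from $\mu_t$ by replacing one variable dividing $\mu_t$ with a variable not dividing it --- such a chain exists because any two $d$-subsets of $\{1,\dots,n\}$ are connected by single-element exchanges. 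Applying the exchange property at the variable removed in each step, an induction on $t$ shows $\mu_t \in V$ for all $t$; hence $\mu \in V$, and as $\mu$ was arbitrary, $V = R_d$.

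It then remains to rule out $V = R_d$, which is the contradiction. When $1 \leq d \leq n-1$, the $x_i$-decomposition of $R_d$ has $\Vhatxi = Q_d$ and $\Vxi = Q_{d-1}$, so $\Lhatxi = Q_d$ and $\mathbf{n}_1 \Lxi = \mathbf{n}_1 Q_{d-1} = Q_d$ (each squarefree degree-$d$ monomial not involving $x_i$ equals $x_j$ times a squarefree monomial for any of its variables $x_j$), whence $\Lhatxi \subseteq \mathbf{n}_1 \Lxi$; and when $d \geq n$ we have $\Vhatxi = 0$, so $\Lhatxi = 0 \subseteq \mathbf{n}_1 \Lxi$. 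Either way $\Lhatxi \subseteq \mathbf{n}_1 \Lxi$, contradicting the assumption that $\mathbf{n}_1 \Lxi \subsetneq \Lhatxi$. The one step I expect to be the crux is the realization that imposing the exchange property of Lemma~\ref{exchange} at all variables simultaneously is rigid enough to force $V = R_d$; once that is seen the proof is short, the only remaining care being the boundary check that $R_d$ is not itself a counterexample. (A direct alternative --- choosing $x_i$ to divide the greatest number of generators of $V$ and bounding the growth of $\Lhatxi$ and $\Lxi$ via Kruskal and Katona's theorem --- also seems to work, but appears markedly more laborious, so I would not pursue it.)
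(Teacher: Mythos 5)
Your proof is correct and follows essentially the same route the paper takes, where this fact appears as the opening paragraph of the proof of Theorem~\ref{decomposeTheorem}: assume no $x_i$ works, invoke Lemma~\ref{exchange} for every $x_i$ to get the full exchange property, conclude $V\in\{0,R_d\}$, and then observe that these spaces do satisfy the conclusion, a contradiction. You fill in two details the paper leaves unstated --- the connectivity-of-$d$-subsets argument showing the exchange property forces $V=R_d$ once $V\neq 0$, and the explicit check of the containment $\Lhatxi\subseteq\mathbf{n}_1\Lxi$ when $V=R_d$ --- and your parenthetical remark about fixing $x_i$ to maximize $|V\cap(x_i)|$ is precisely what the paper does later in Lemma~\ref{topVarLemma}.
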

\begin{proof}
\idiot{This is the old proof.

By the previous lemma it suffices to check if $\Lhatxi{} \oplus x_i \Lxi{}$ when $x_i$ is the variable for which $|\Vxi|$ is
largest. That is, $x_i$ divides the largest possible number of monomial generators of $V$.

Assume for a contradiction that $\Lhatxi{} \not \subseteq \mathbf{n}_1 \Lxi{}$. As these
are two lex segments in the same degree we have $\mathbf{n}_1 \Lxi{} \subset \Lhatxi{}$.

By Lemma \ref{decompGrowth}, 
\[ | \mathbf{m}_1 V| = |\mathbf{n}_1 \Vhatxi{}| + |\Vhatxi{} + \mathbf{n}_1 \Vxi |
\]
and
\[ | \mathbf{m}_1 L| = |\mathbf{n}_1 \Lhatxi{}| + |\Lhatxi{} + \mathbf{n}_1 \Lxi{} |.
\]

Since $V$ and $\Vhatxi{}$ are Gotzmann, we have $|\mathbf{m}_1 V| = | \mathbf{m}_1 L|$
and $|\mathbf{n}_1 \Vhatxi{}| = |\mathbf{n}_1 \Lhatxi{}|$ which gives
\begin{align*}
	|\Vhatxi{} + \mathbf{n}_1 \Vxi |
	&=|\Lhatxi{} + \mathbf{n}_1 \Lxi{} |\\
	&=|\Lhatxi{}|
\end{align*}
as $\mathbf{n}_1 \Lxi{} \subset \Lhatxi{}$. Since $|\Vhatxi{}| = |\Lhatxi{}|$
we get $\mathbf{n}_1 \Vxi \subseteq \Vhatxi{}$.

Let $x_j$ be a variable with $|V_{x_j}| < |\Vxi|$ and consider the monomials in $\Vxi$ 
which are not divisible by $x_j$. For each such monomial $m$, the monomial $mx_j$ is
in $\Vhatxi{}$ as $\mathbf{n}_1 \Vxi \subseteq \Vhatxi{}$. Therefore, there are at
least $|\Vxi|$ monomials in $V$ which divisible by $x_j$. This contradicts our
choice of $x_j$.
}
\idiot{
The following is the old proof. It doesn't work because it assumes that $T$ (the compression) doesn't depend on which variable we're compressing with respect to. I've added the condition that 
not all $|\Vxi|$ are equal. Is there a way around this?

Suppose not.  Then in particular $V$ is nonzero and $\mathbf{n}_1 \Lxi{}\subset
\Lhatxi{}$ for all $i$. 

Let $u$ be any monomial in $L$ and $x_j$ be a variable dividing $u$ and $x_k$ be a variable
not dividing $u$. As $\mathbf{n}_1 L_{x_j} \subseteq L_{\hat x_j}$ and $u/x_j \in L_{x_j}$,
we have $x_k u /x_j \in L_{\hat x_j}$.

Let $u$ be any
monomial of $T$, and $v$ another monomial of degree $d$.  We will show
that $v\in T$.  Let $\{a_{1},\dots, a_{r}\}$ be the monomials dividing
$u$ but not $v$ and $\{b_{1},\dots, b_{r}\}$ be the monomials dividing
$v$ but not $u$.  Setting $a=a_{1}$, we have $\frac{u}{a}\in T_{a}$,
so $u\frac{b_{1}}{a_{1}}\in \mathbf{n}_1 T_{a}\subset T_{1}\subset T$.  Inducting on $r$,
we have $v\in T$.  Since $v$ was arbitrary, it follows that
$I=T=X^{d}$.  Thus $\mathbf{n}_1 T_{a}=T_{1}=\mathbf{n}_1 ^{d}$.
}

\end{proof}

\todo{
Remove the condition that all $|\Vxi|$ are equal or show that it's needed.
}

Thus we have the following theorem:
}

\begin{theorem}\label{decomposeTheorem}
Suppose $V\subset R_{d}$ is a Gotzmann monomial vector space.
Then $x_i$ may be chosen so that both summands $\Vxi$ and 
$\Vhatxi{}$ of the $x_i$-decomposition of $V$ are Gotzmann 
in $Q$ and $\Vhatxi{}\subseteq \mathbf{n}_1 \Vxi{}$.
\end{theorem}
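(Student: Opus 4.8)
The plan is to isolate a single sufficient condition on $x_i$ --- the lex-segment containment $\Lhatxi{}\subseteq\mathbf{n}_1\Lxi{}$ --- and then show that at least one variable always satisfies it. For the sufficiency half: fix any $x_i$ with $\Lhatxi{}\subseteq\mathbf{n}_1\Lxi{}$. The Proposition preceding Lemma~\ref{almostThere} already gives that $\Vhatxi{}$ is Gotzmann in $Q$ for every choice of $x_i$. Since $\Lhatxi{}\subseteq\mathbf{n}_1\Lxi{}$ excludes the second alternative of Lemma~\ref{almostThere}, that lemma forces $\Vxi$ to be Gotzmann in $Q$ as well. Finally, ``$\Vhatxi{}$ Gotzmann'' gives $|\mathbf{n}_1\Vhatxi{}|=|\mathbf{n}_1\Lhatxi{}|$, which collapses the growth equality \eqref{growthEquality} to $|\Vhatxi{}+\mathbf{n}_1\Vxi|=|\Lhatxi{}+\mathbf{n}_1\Lxi{}|=|\mathbf{n}_1\Lxi{}|$, the last step using $\Lhatxi{}\subseteq\mathbf{n}_1\Lxi{}$; combining with $|\mathbf{n}_1\Vxi|=|\mathbf{n}_1\Lxi{}|$ (from ``$\Vxi$ Gotzmann'', via Proposition~\ref{gotzByLexSeg}) yields $|\Vhatxi{}+\mathbf{n}_1\Vxi|=|\mathbf{n}_1\Vxi|$, hence $\Vhatxi{}\subseteq\mathbf{n}_1\Vxi$. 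Thus any such $x_i$ delivers all three conclusions, and the theorem is reduced to an existence statement.

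To produce such an $x_i$, I would argue by contradiction: assume $\Lhatxi{}\not\subseteq\mathbf{n}_1\Lxi{}$ for \emph{every} variable $x_i$. Since $\Lhatxi{}$ and $\mathbf{n}_1\Lxi{}$ are lex segments of degree $d$ in $Q$, one contains the other, so $\mathbf{n}_1\Lxi{}\subsetneq\Lhatxi{}$ for every $x_i$; in particular $\mathbf{n}_1\Lxi{}\subseteq\Lhatxi{}$, so Lemma~\ref{exchange} applies at every $x_i$. Consequently the set of degree-$d$ monomials of $V$ is closed under every ``variable swap'' $m\mapsto\frac{x_j}{x_i}m$ with $x_i\mid m$ and $x_j\nmid m$. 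Note $V\neq 0$, since otherwise $\Lhatxi{}=\mathbf{n}_1\Lxi{}=0$ and the assumption fails. Picking any $m_0\in\gens(V)$ and any degree-$d$ monomial $m$, an induction on $|\operatorname{supp}(m_0)\triangle\operatorname{supp}(m)|$ --- at each step swapping some variable of $\operatorname{supp}(m_0)\setminus\operatorname{supp}(m)$ for one of $\operatorname{supp}(m)\setminus\operatorname{supp}(m_0)$, which is possible because the two supports have equal size --- shows $m\in V$. Hence $V=R_d$.

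But $V=R_d$ contradicts the standing assumption: then $V$ is its own $x_i$-compression, so $\mathbf{n}_1\Lxi{}=\mathbf{n}_1 Q_{d-1}=Q_d=\Lhatxi{}$, which is not a proper subspace of $\Lhatxi{}$. Therefore some $x_i$ satisfies $\Lhatxi{}\subseteq\mathbf{n}_1\Lxi{}$, and by the first paragraph it witnesses the theorem. I expect the existence step to be the crux: one must recognise that failing the containment simultaneously at \emph{every} variable is not a harmless degeneracy but --- through the exchange property of Lemma~\ref{exchange} --- forces $V$ to be all of $R_d$, where the hypothesis is self-contradictory. The remaining manipulations are routine consequences of \eqref{growthEquality} and the lemmas already in place.
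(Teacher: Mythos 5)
Your proposal matches the paper's proof in structure and in every essential step: you reduce the theorem to finding a variable $x_i$ with $\Lhatxi{}\subseteq\mathbf{n}_1\Lxi{}$, you rule out the alternative by invoking Lemma~\ref{exchange} at every variable and using the exchange property to force $V\in\{0,R_d\}$ (an easy contradiction), and you then apply the proposition preceding Lemma~\ref{almostThere}, Lemma~\ref{almostThere} itself, and equation \eqref{growthEquality} to obtain the three conclusions. This is exactly the argument in the paper, just with the $V=0$ and $V=R_d$ cases spelled out a bit more carefully.

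One minor wrinkle worth noting (present in the paper's own proof as well): your phrase ``excludes the second alternative of Lemma~\ref{almostThere}'' is not literally right, since $\Lhatxi{}\subseteq\mathbf{n}_1\Lxi{}$ and $\mathbf{n}_1\Lxi{}\subseteq\Lhatxi{}$ can both hold (when the two lex segments coincide). What actually justifies the conclusion is that the displayed chain of inequalities in the proof of Lemma~\ref{almostThere} only needs $\Lhatxi{}\subseteq\mathbf{n}_1\Lxi{}$ to give $\bigl|\Lhatxi{}+\mathbf{n}_1\Lxi{}\bigr|=\bigl|\mathbf{n}_1\Lxi{}\bigr|$, which already forces $\Vxi{}$ Gotzmann. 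Since you use precisely that chain again in the next sentence, your argument is sound; only the phrasing of the appeal to the lemma is slightly loose.
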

\begin{proof}
Suppose that $x_{i}$ cannot be chosen so that the summands $\Lxi$ and
$\Lone$ of the $x_{i}$-compression satisfy $\Lone\subseteq
\mathbf{n}_{1}\Lxi$.   
Then Lemma \ref{exchange} applies for all
$x_{i}$, so $V$ satisfies the property:
\begin{quotation}
Let $m\in V$ be a monomial, and suppose that $x_{i}$ divides $m$ and
$x_{j}$ does not.  Then $\frac{x_{j}}{x_{i}}m\in V$ as well.
\end{quotation}
The only subspaces of $R_{d}$ satisfying this property are $(0)$ and
$R_{d}$.  In either case, we have  $\Lone\subseteq
\mathbf{n}_{1}\Lxi$ for any $x_{i}$.   

Thus, $x_{i}$ may be chosen such that  $\Lone\subseteq
\mathbf{n}_{1}\Lxi$.  Then by Lemma \ref{almostThere}
$\Vxi$ and $\Vone$ are Gotzmann
in $Q$.  Applying \eqref{growthEquality}, we have
$|\Vone+\mathbf{n}_{1}\Vxi|=|\Lone+\mathbf{n}_{1}\Lxi|=|\mathbf{n}_{1}\Lxi|=|\mathbf{n}_{1}\Vxi|$,
i.e., $\Vhatxi{}\subseteq \mathbf{n}_1 \Vxi{}$.
\end{proof}

In fact, the obvious choice of variable works:
\begin{lemma}\label{topVarLemma}
Suppose $V\subset R_{d}$ is a Gotzmann monomial vector space, and let
$x_{i}$ be such that $|V\cap (x_{i})|$ is maximal.  Let
$V=\Vone\oplus x_{i}\Vxi$ be the $x_{i}$-decomposition of $V$.  Then
$\Vone$ and $\Vxi$ are both Gotzmann in $Q$ and $\Vhatxi{}\subseteq
\mathbf{n}_1 \Vxi{}$.  
\end{lemma}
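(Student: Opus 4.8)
The plan is to reduce the lemma to the single claim that the $x_{i}$-compression $L=\Lone\oplus x_{i}\Lxi$ of $V$ satisfies $\Lone\subseteq\mathbf{n}_{1}\Lxi$, and then to extract this claim from Theorem \ref{decomposeTheorem} by exploiting that $|V\cap(x_{i})|$ is as large as possible. For the reduction, I would first recall that $\Vone$ is Gotzmann in $Q$ for \emph{every} choice of variable (this is the Proposition just before Lemma \ref{almostThere}), so the real content of the lemma is that $\Vxi$ is Gotzmann in $Q$ and that $\Vhatxi{}\subseteq\mathbf{n}_{1}\Vxi{}$. Granting $\Lone\subseteq\mathbf{n}_{1}\Lxi$, both of these come out exactly as at the end of the proof of Theorem \ref{decomposeTheorem}: since $\Vone$ is Gotzmann, $|\mathbf{n}_{1}\Vone|=|\mathbf{n}_{1}\Lone|$, so \eqref{growthEquality} collapses to $|\Vone+\mathbf{n}_{1}\Vxi|=|\Lone+\mathbf{n}_{1}\Lxi|=|\mathbf{n}_{1}\Lxi|$; hence $|\mathbf{n}_{1}\Vxi|\le|\mathbf{n}_{1}\Lxi|$, and (the reverse inequality being automatic by Kruskal--Katona) Proposition \ref{gotzByLexSeg} shows $\Vxi$ is Gotzmann in $Q$; then the displayed equalities give $|\Vone+\mathbf{n}_{1}\Vxi|=|\mathbf{n}_{1}\Vxi|$, i.e. $\Vhatxi{}\subseteq\mathbf{n}_{1}\Vxi{}$.

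So the whole argument reduces to proving $\Lone\subseteq\mathbf{n}_{1}\Lxi$, which I expect to be the main step. By Theorem \ref{decomposeTheorem} we may pick a (possibly different) variable, say $x_{j}$, such that the summands $W_{0},W_{1}$ of the $x_{j}$-decomposition $V=W_{0}\oplus x_{j}W_{1}$ are Gotzmann in $R/(x_{j})$ and $W_{0}\subseteq\mathbf{n}_{1}W_{1}$ (with $\mathbf{n}_{1}$ now taken in $R/(x_{j})$). Writing $M_{0}\oplus x_{j}M_{1}$ for the $x_{j}$-compression, $M_{0}$ and $M_{1}$ are lex segments of degrees $d$ and $d-1$ with $|M_{0}|=|W_{0}|$ and $|M_{1}|=|W_{1}|$; since $W_{1}$ is Gotzmann, Proposition \ref{gotzByLexSeg} gives $|\mathbf{n}_{1}M_{1}|=|\mathbf{n}_{1}W_{1}|\ge|W_{0}|=|M_{0}|$. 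Now the maximality hypothesis enters: $|\Lxi|=|V\cap(x_{i})|\ge|V\cap(x_{j})|=|M_{1}|$, and correspondingly $|\Lone|=|V|-|V\cap(x_{i})|\le|V|-|V\cap(x_{j})|=|M_{0}|$. Since $\Lxi$ and $M_{1}$ are degree $d-1$ lex segments in squarefree rings on $n-1$ variables and the Kruskal--Katona growth function is non-decreasing in dimension, $|\mathbf{n}_{1}M_{1}|\le|\mathbf{n}_{1}\Lxi|$. Chaining,
\[ |\Lone|\le|M_{0}|\le|\mathbf{n}_{1}M_{1}|\le|\mathbf{n}_{1}\Lxi|, \]
and because $\Lone$ and $\mathbf{n}_{1}\Lxi$ are nested lex segments of degree $d$ in $Q$, this forces $\Lone\subseteq\mathbf{n}_{1}\Lxi$, completing the proof.

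The main obstacle is precisely the third link of that chain: $\Lxi$ and $M_{1}$ live in different quotient rings $R/(x_{i})$ and $R/(x_{j})$, so the comparison cannot be made by literal set inclusion and must instead be routed through dimensions and the (ring-independent) Kruskal--Katona function. The role of the maximality of $|V\cap(x_{i})|$ is exactly to orient the two dimension inequalities $|\Lxi|\ge|M_{1}|$ and $|\Lone|\le|M_{0}|$ so that the chain closes up; without it the argument breaks. Everything else is bookkeeping that has already appeared in the proofs of Lemma \ref{almostThere} and Theorem \ref{decomposeTheorem}.
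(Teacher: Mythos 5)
Your proof is correct and follows essentially the same route as the paper's: both establish the dimension chain $|\Lone|\leq|\Wone|\leq|\mathbf{n}_{1}\Wxj|\leq|\mathbf{n}_{1}\Lxi|$ (you phrase it through the $x_{j}$-compression $M_{0},M_{1}$, but since $|M_{0}|=|\Wone|$ and $|\mathbf{n}_{1}M_{1}|=|\mathbf{n}_{1}\Wxj|$ these are the same numbers), then feed the resulting containment $\Lone\subseteq\mathbf{n}_{1}\Lxi$ into Lemma~\ref{almostThere} and \eqref{growthEquality} to finish. The paper cites Lemma~\ref{almostThere} where you re-derive its content inline, but the argument is the same; your remark that the middle comparison must pass through dimensions because $\Lxi$ and $M_{1}$ live in different quotient rings is a correct and worthwhile clarification of something the paper leaves implicit.
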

\begin{proof}
Let $\Lone$ and $\Lxi$ be the lexifications in $Q$ of $\Vone$ and
$\Vxi$, respectively.

By Theorem \ref{decomposeTheorem}, there exists a variable $x_{j}$
such that we may decompose $V=\Wone\oplus x_{j}\Wxj$ with both $\Wone$
and $\Wxj$ Gotzmann in $Q$ and $\Wone\subseteq\mathbf{n}_{1}\Wxj$.  

We have 
\[
|\Lone|\leq |\Wone|\leq |\mathbf{n}_{1}\Wxj|\leq |\mathbf{n}_{1}\Lxi|,
\]
the first inequality by construction, the second by Theorem
\ref{decomposeTheorem}, and the third because $|\Wxj|\leq |\Lxi|$
and both are Gotzmann.  By Lemma \ref{almostThere}, $\Vxi$ is
Gotzmann.  Applying \eqref{growthEquality} again, we obtain $\Vhatxi{}\subseteq
\mathbf{n}_1 \Vxi{}$.    
\end{proof}

Unfortunately the converse to Theorem \ref{decomposeTheorem} does not
hold in general.  For example, let 
$V=\spn_{\field} \{ab,ac,bc\}$ in $R=\field[a,b,c,d]/(a^2, \ldots, d^2)$. 
Then $V$ is not Gotzmann in $R$ but, decomposing with respect to $a$,
$V_{0}=\spn_{\field} \{bc\}$ 
and $V_{1}= \spn_{\field} \{ b,c \}$ 
are both Gotzmann in $Q=\field[b,c,d]/(b^2,c^2,d^2)$.

\todo{
Prepare the next theorem with the necessary information about Macaulay represenations.
Either that or find a way to use lex segments to prove it.
}

However, we can prove the following partial converse.

\begin{theorem} \label{reconstructTheorem}
Let $\Vhatxi{}$ and $\Vxi$ be Gotzmann monomial vector spaces in $Q$ 
with $\Vhatxi{} = \mathbf{n}_1  \Vxi$. Then
$V = \Vhatxi{} \oplus \mathbf{n}_1  \Vxi$ is Gotzmann in $R$. 
\begin{proof}
Choose any lex order in which $x_{i}$ comes last, and let $L=\Lone+
x_{i}\Lxi$ be the $x_{i}$-compression of $V$.  
We have
$|\mathbf{m}_{1}V|=|\mathbf{n}_{1}\Vone|+|\Vone+\mathbf{n}_{1}\Vxi|=|\mathbf{n}_{1}\Vone|+|\Vone|=|\mathbf{n}_{1}\Lone|+|\Lone|=|\mathbf{n}_{1}\Lone|+|\Lone+\mathbf{n}_{1}\Lxi|=|\mathbf{m}_{1}L|$.
Thus, it suffices to show that $L$ is lex.

Indeed, suppose that $u\in L$ and $v$ is a monomial of the same degree
which precedes $u$ in the lex order.  If both or neither of $u,v$ are
divisible by $x_{i}$, then clearly $v\in L$.  Now suppose that $u$ is
divisible by $x_{i}$ but $v$ is not.  Then we may write $u=u'x_{i}$.
By construction, $v$ precedes $u'$ in the (ungraded) lex order.  Let
$v'=\frac{v}{x_{j}}$, where $x_{j}$ is the lex-last variable dividing
$v$.  Then $v'$ precedes $u'$ in the lex order as well, so $u'\in
\Lxi$ implies $v'\in\Lxi$ and in particular $v\in
\mathbf{n}_{1}\Lxi=\Lone$.  A similar argument shows that $v\in L$ if
$v$ is divisible by $x_{i}$ but $u$ is not.
\idiot{This is the old proof.

Let the $d-1$-th Macaulay representation of $|Q_{d-1}/\Vxi|$ be 
\[ |Q_{d-1}/\Vxi| = \binom{a_{d-1}}{d-1} + \cdots + \binom{a_1}{1}
\]
where $a_{d-1} > a_{d-2} > \cdots >  a_1 \geq 0$. Then the $d$-th Macaulay representation
of $|Q_d/\Vhatxi{}|$ is
\[ |Q_{d}/\Vhatxi{}| = \binom{a_{d-1}}{d} + \cdots + \binom{a_1}{2}
\]
as $\Vxi$ is Gotzmann, $\mathbf{n}_1  \Vxi = \Vhatxi{}$ and by the Kruskal-Katona theorem.

Since $|R_d| = \binom{n}{d} = \binom{n-1}{d} + \binom{n-1}{d-1} = |Q_d| + |Q_{d-1}|$,
we can compute the dimension of $R_d/V$ as
\begin{align*} 
	|R_d / V| 
	&= |R_d| - |V| \\
	&= |Q_d| + |Q_{d-1}| - |\Vhatxi{}| - |\Vxi| \\
	&= |Q_d/\Vhatxi{}| + |Q_{d-1}/\Vxi|.
\end{align*}

Thus, $|R_d /V|$ has $d$-th Macaulay representation
\begin{align*} 
	|R_d / V| 
	&= \binom{a_{d-1}}{d} + \cdots + \binom{a_1}{2} \\
	&\qquad + \binom{a_{d-1}}{d-1} + \cdots + \binom{a_1}{1} \\
	&= \binom{a_{d-1}+1}{d} + \cdots + \binom{a_1 +1}{2}.
\end{align*}

The monomial vector space $V$ is Gotzmann if 
\begin{align*}
	|R_{d+1} / \mathbf{m}_1 V| 
	&= |R_d/V|^{(d)} \\
	&= \binom{a_{d-1}+1}{d+1} + \cdots + \binom{a_1 + 1}{3}
\end{align*}
and this is what will be shown.

As $\mathbf{n}_1 \Vxi = \Vhatxi{}$ we have $\mathbf{m}_1 V = \mathbf{n}_1 \Vhatxi{} \oplus x_i (\Vhatxi{} + \mathbf{n}_1 \Vxi) = \mathbf{n}_1 \Vhatxi{} \oplus x_i \Vhatxi{}$.
\todo{
Is $\Vhatxi{}$ gotzmann in $R$ if it's Gotzmann in $Q$? Probably not.
If it were, $\Vhatxi{} + x_i J$ where $J \subseteq \Vxi$ is arbitrary would probably be Gotzmann.

Hmm. I don't remember what I was getting at here. It's probably not important.
}

Therefore 
\begin{align*}
	|R_{d+1}/ \mathbf{m}_1 V| 
	&= |Q_{d+1} / \mathbf{n}_1 \Vhatxi{}|  + |Q_d / \Vhatxi{}|\\
	&= \binom{a_{d-1}}{d+1} + \cdots + \binom{a_1}{3} \\
	&\qquad + \binom{a_{d-1}}{d} + \cdots + \binom{a_1}{2}\\
	&= \binom{a_{d-1}+1}{d+1} + \cdots + \binom{a_1+1}{3} 
\end{align*}
and so $I$ is Gotzmann by the Kruksal-Katona theorem.
}
\end{proof}
\end{theorem}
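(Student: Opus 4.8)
The plan is to establish the growth equality $|\mathbf{m}_{1}V| = |\mathbf{m}_{1}L|$ for the $x_i$-compression $L$ of $V$, and then to prove directly that $L$ is lex, so that $L$ is Gotzmann (being a lex segment) and hence so is $V$. First I would compute $|\mathbf{m}_1 V|$ using Lemma \ref{decompGrowth}: since $\Vhatxi{} = \mathbf{n}_1 \Vxi$, the decomposition simplifies to $\mathbf{m}_1 V = \mathbf{n}_1 \Vhatxi{} \oplus x_i \Vhatxi{}$, so $|\mathbf{m}_1 V| = |\mathbf{n}_1 \Vhatxi{}| + |\Vhatxi{}|$. Since $\Vxi$ and $\Vhatxi{}$ are Gotzmann in $Q$, the lex segments $\Lxi$ and $\Lone$ of the same degrees and dimensions satisfy $|\mathbf{n}_1 \Lxi| = |\mathbf{n}_1 \Vxi|$ and $|\mathbf{n}_1 \Lone| = |\mathbf{n}_1 \Vhatxi{}|$, and $|\Lone| = |\Vhatxi{}| = |\mathbf{n}_1 \Vxi| = |\mathbf{n}_1 \Lxi|$; since $\Lone$ and $\mathbf{n}_1\Lxi$ are both lex segments of the same degree and dimension they are equal, so $\mathbf{n}_1 \Lxi \subseteq \Lone$ and the formula of Lemma \ref{decompGrowth} applied to $L$ gives $|\mathbf{m}_1 L| = |\mathbf{n}_1 \Lone| + |\Lone| = |\mathbf{n}_1 \Vhatxi{}| + |\Vhatxi{}| = |\mathbf{m}_1 V|$.

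Having matched the growth, I then reduce to showing $L = \Lone \oplus x_i \Lxi$ is a lex segment for a suitably chosen lex order. The natural choice is any lex order in which $x_i$ is the last (smallest) variable. The argument is a routine case analysis on whether a monomial and a lex-earlier competitor are divisible by $x_i$: the two "same status" cases are immediate from $\Lone$ and $\Lxi$ being lex in $Q$; the case where $u = u'x_i \in L$ and $v \not\in (x_i)$ precedes $u$ requires showing $v \in \Lone = \mathbf{n}_1 \Lxi$, which follows because $v$ precedes $u'$ in the ungraded lex order, hence $v/x_j$ (with $x_j$ the lex-smallest variable dividing $v$) precedes $u' \in \Lxi$, so $v/x_j \in \Lxi$ and $v \in \mathbf{n}_1 \Lxi$; the remaining case ($v \in (x_i)$, $u \not\in (x_i)$) is symmetric.

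The potential obstacle is making sure the lex order is compatible on both sides of the decomposition—that "$\Lxi$ lex in $Q$" and "$\Lone$ lex in $Q$" really are initial segments for the \emph{same} restriction of the lex order to $Q = R/(x_i)$, and that the bookkeeping in the mixed case correctly handles the degree shift between $u$ (degree $d$) and $u'$ (degree $d-1$). Since $x_i$ is chosen last, deleting it from a monomial does not disturb the lex comparison among the remaining variables, so this goes through. Once $L$ is shown to be lex, it is Gotzmann by Macaulay--Kruskal--Katona, and then $|\mathbf{m}_1 V| = |\mathbf{m}_1 L| = |\mathbf{m}_1 L'|$ where $L'$ is the lex segment of dimension $|V|$ in $R_d$; by Proposition \ref{gotzByLexSeg}, $V$ is Gotzmann.
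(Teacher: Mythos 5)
Your proof is correct and follows essentially the same route as the paper's: establish $|\mathbf{m}_1 V| = |\mathbf{m}_1 L|$ for the $x_i$-compression $L$ via Lemma \ref{decompGrowth} and the Gotzmann hypotheses on $\Vone, \Vxi$, then verify directly that $L$ is lex (with $x_i$ last) by a case analysis on $x_i$-divisibility. The only cosmetic difference is that you spell out the intermediate observation $\Lone = \mathbf{n}_1\Lxi$ (as equal lex segments of equal dimension), which the paper leaves implicit in its chain of equalities.
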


\begin{example} 
Consider the Gotzmann vector space 
\[ V_{1} = \spn_{\field} \{ ab,bc,cd,ad\}
\]
in $Q  = \field[a,b,c,d]/(a^2, \ldots, d^2)$. Let $V_{0} = \mathbf{n}_1 V_{1}$:
\[ V_{0} = \spn_{\field}  \{
abc,
abd,
acd,
bcd
\}.
\]
In $R = \field{}[a,b,c,d,e]/(a^2, \ldots, e^2)$, the monomial vector space 
$V = V_{0} + e V_{1}$ is
Gotzmann but is not lex with respect to any order of the variables.
\end{example}

\subsection{Alexander Duality}
Recall that for a monomial vector space $V\subseteq R_{d}$, the
\emph{Alexander dual} of $V$ is the subspace $V^{\vee}\subset R_{n-d}$
spanned by the monomials $\{\frac{\mathbf{x}}{m}:m\not\in V\}$ where
$\mathbf{x}$ is the product of all the variables.  For a
monomial ideal $I\subset R$, the Alexander dual is $I^{\vee}=\oplus
(I_{d})^{\vee}$.  This duality corresponds to topological Alexander
duality under the Stanley-Reisner correspondence, and turns out to
have many nice algebraic properties.  For example, duality turns generators into
associated primes, and the duals of lex or Borel ideals are always lex
or Borel, respectively.  Thus, we would like to understand ideals
whose duals are Gotzmann.
  
\begin{definition}
We say that a monomial vector space $V$ is \emph{\gdual{}} if $V^{\vee}$
is Gotzmann.
\end{definition}

\begin{theorem}\label{dualDecompose}
Let $V$ be \gdual{} in $R$.  Then $x_{i}$ may be chosen so that both summands
$\Vone$ and $\Vxi$ of the $x_{i}$-decomposition are \gdual{} in $Q$, and 
$(\Vone: \mathbf{n}_{1}) \subseteq \Vxi$.
\end{theorem}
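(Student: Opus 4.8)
The plan is to dualize Theorem \ref{decomposeTheorem}. The key point is that Alexander duality interacts cleanly with the $x_i$-decomposition. Write $\mathbf{x}$ for the product of all the variables of $R$ and $\mathbf{x}'=\mathbf{x}/x_i$ for the product of the variables of $Q$, and for a monomial vector space $U\subseteq Q_e$ let $U^{\vee_Q}\subseteq Q_{(n-1)-e}$ be its Alexander dual taken inside $Q$, i.e. the span of $\{\mathbf{x}'/m\mid m\notin U\}$. I claim that the $x_i$-decomposition of $V^{\vee}$ is
\[ V^{\vee}=(\Vxi)^{\vee_Q}\oplus x_i(\Vone)^{\vee_Q}. \]
This is a monomial-by-monomial check: a monomial $\mu\in R_{n-d}$ with $x_i\nmid\mu$ lies in $V^{\vee}$ iff $\mathbf{x}/\mu\notin V$, and since $x_i\mid(\mathbf{x}/\mu)$ this holds iff $\mathbf{x}/(x_i\mu)=\mathbf{x}'/\mu\notin\Vxi$, i.e. iff $\mu\in(\Vxi)^{\vee_Q}$; the case $x_i\mid\mu$ is symmetric and produces the second summand $(\Vone)^{\vee_Q}$. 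Thus duality in $R$ transposes the two pieces of an $x_i$-decomposition and dualizes each inside $Q$.

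I would also record the standard fact that Alexander duality in $Q$ exchanges the operation $\mathbf{n}_1(-)$ with the colon operation: for $U\subseteq Q_e$,
\[ \mathbf{n}_1(U^{\vee_Q})=(U:\mathbf{n}_1)^{\vee_Q},\qquad (U:\mathbf{n}_1):=\{m\in Q_{e-1}\mid \mathbf{n}_1 m\subseteq U\}. \]
Again this is immediate monomial by monomial: $\mu\in\mathbf{n}_1(U^{\vee_Q})$ iff some $x_j\mid\mu$ satisfies $\mu/x_j\in U^{\vee_Q}$, i.e. $\mathbf{x}'/(\mu/x_j)\notin U$, and this is exactly the condition for $\mu\in(U:\mathbf{n}_1)^{\vee_Q}$. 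Note also that $\vee_Q$ is an inclusion-reversing involution on monomial vector spaces.

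Granting these two identities, the argument is short. Since $V$ is \gdual{}, $W:=V^{\vee}$ is Gotzmann in $R$, so Theorem \ref{decomposeTheorem} provides a variable $x_i$ for which both summands of the $x_i$-decomposition $W=W_0\oplus x_i W_1$ are Gotzmann in $Q$ and $W_0\subseteq\mathbf{n}_1 W_1$. By the first displayed identity, $W_0=(\Vxi)^{\vee_Q}$ and $W_1=(\Vone)^{\vee_Q}$, so $(\Vxi)^{\vee_Q}$ and $(\Vone)^{\vee_Q}$ are Gotzmann in $Q$, which says precisely that $\Vxi$ and $\Vone$ are \gdual{} in $Q$. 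Finally $W_0\subseteq\mathbf{n}_1 W_1$ reads $(\Vxi)^{\vee_Q}\subseteq\mathbf{n}_1(\Vone)^{\vee_Q}=(\Vone:\mathbf{n}_1)^{\vee_Q}$, and applying the inclusion-reversing involution $\vee_Q$ yields $(\Vone:\mathbf{n}_1)\subseteq\Vxi$, as required.

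The only genuine work is the bookkeeping in the two displayed identities: tracking degree shifts, the replacement of $\mathbf{x}$ by $\mathbf{x}'$, and the fact that duality transposes rather than fixes the two summands. The most error-prone point is the compatibility of $\mathbf{n}_1(-)$ with $(-:\mathbf{n}_1)$ under duality; once it is in place, the theorem is a mechanical translation of Theorem \ref{decomposeTheorem} through Alexander duality.
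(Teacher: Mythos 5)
Your proof is correct and follows essentially the same route as the paper's: dualize to $W=V^\vee$, apply Theorem~\ref{decomposeTheorem} to $W$, and observe that Alexander duality transposes the two summands of the $x_i$-decomposition, so that $\Vone$ and $\Vxi$ are \gdual{} in $Q$. The paper obtains the final containment $(\Vone:\mathbf{n}_1)\subseteq\Vxi$ by a direct monomial chase rather than by first recording the identity $\mathbf{n}_1(U^{\vee_Q})=(U:\mathbf{n}_1)^{\vee_Q}$ and then applying the inclusion-reversing involution, but this is the same computation in different packaging.
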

\begin{proof}  
Let $W=V^{\vee}$.  Then Theorem \ref{decomposeTheorem} applies to $W$,
so we may choose $x_{i}$ such that $\Wone$ and $\Wxi$ are Gotzmann in
$Q$ and  $\Wone\subseteq \mathbf{n}_{1}\Wxi$.

We compute $\Vone=(\Wxi)^{\vee}$ and $\Vxi=(\Wone)^{\vee}$.  In particular, $\Vone$ and
$\Vxi$ are \gdual{}.
Finally, suppose that $m\in (\Vone:\mathbf{n}_{1})$.  We will show
that $m\in\Vxi$.  By construction, $mx_{j}\in \Vone$ for all
$x_{j}\neq x_{i}$ and not dividing $m$, so
$\frac{\mathbf{x}}{mx_{j}}\not\in \Wxi$ for any such $x_{j}$.  Hence
$\frac{\mathbf{x}}{m}\not\in\mathbf{n}_{1}\Wxi$.  Since $\Wone\subseteq
\mathbf{n}_{1}\Wxi$, we have $\frac{\mathbf{x}}{m}\not\in\Wone$.  Thus
$m\in \Vxi$, as desired.
\end{proof}

Thus, any recursive enumeration of \gdual{} ideals should look similar
to any recursive enumeration of Gotzmann ideals.  However, they will
not be identical.  In fact, ideals which are simultaneously Gotzmann
and \gdual{} are quite rare, as the next theorem shows.

\idiot{This doesn't seem necessary after all.

\begin{proposition}Duality commutes with compression.  That is, if
  $T$ is the $x_{i}$-compression of $V$ then $T^{\vee}$ is the
  $x_{i}$-compression of $V^{\vee}$.
\end{proposition}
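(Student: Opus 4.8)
The plan is to reduce the proposition to two ingredients: an explicit formula for the $x_{i}$-decomposition of an Alexander dual, and the observation that Alexander duality \emph{inside $Q$} carries lex segments to lex segments of complementary dimension. Granting both, the proposition follows by bookkeeping.

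First I would compute the $x_{i}$-decomposition of $V^{\vee}$. Write $\mathbf{x}'=\prod_{j\neq i}x_{j}$ for the product of the variables of $Q$, so that $\mathbf{x}=x_{i}\mathbf{x}'$. A squarefree monomial $m$ of degree $d$ is not divisible by $x_{i}$ exactly when $\mathbf{x}/m$ is divisible by $x_{i}$; in that case $m\in Q_{d}$ and $\mathbf{x}/m=x_{i}(\mathbf{x}'/m)$, so letting $m$ range over $Q_{d}\smallsetminus \Vhatxi{}$ identifies the part of $V^{\vee}$ lying in $(x_{i})$ with $x_{i}(\Vhatxi{})^{\vee}$, the dual computed in $Q$. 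Similarly a degree-$d$ squarefree monomial divisible by $x_{i}$ has the form $x_{i}m'$ with $m'\in Q_{d-1}$, and $x_{i}m'\notin V$ iff $m'\notin \Vxi$, while $\mathbf{x}/(x_{i}m')=\mathbf{x}'/m'$; hence the part of $V^{\vee}$ not in $(x_{i})$ is $(\Vxi)^{\vee}$. Thus the $x_{i}$-decomposition of $V^{\vee}$ is
\[
V^{\vee}=(\Vxi)^{\vee}\oplus x_{i}(\Vhatxi{})^{\vee},
\]
with both duals taken in $Q$, where $(\Vxi)^{\vee}\subseteq Q_{n-d}$ and $(\Vhatxi{})^{\vee}\subseteq Q_{n-1-d}$. (This is exactly the identity already used in the proof of Theorem \ref{dualDecompose}.)

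Second, I would record the lemma that in the squarefree ring $Q$ on $n-1$ variables the Alexander dual of a lex segment $L\subseteq Q_{e}$ of dimension $k$ is the lex segment of $Q_{(n-1)-e}$ of dimension $\binom{n-1}{e}-k$. Identifying squarefree monomials with subsets of $\{1,\dots,n-1\}$, one checks directly that complementation of subsets reverses the lexicographic order on $e$-subsets; therefore the set-theoretic complement of an initial lex segment of $e$-subsets is a final segment of $((n-1)-e)$-subsets, and the Alexander dual, which consists of the complement-monomials of the monomials \emph{not} in $L$, is again an initial segment, of the complementary size (and with respect to the same lex order).

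Finally I would combine the pieces. Write the $x_{i}$-compression of $V$ as $T=\Lhatxi{}\oplus x_{i}\Lxi$, with $\Lhatxi{}\subseteq Q_{d}$ and $\Lxi\subseteq Q_{d-1}$ the lex segments satisfying $|\Lhatxi{}|=|\Vhatxi{}|$ and $|\Lxi|=|\Vxi|$. Applying the first step to $T$ gives $T^{\vee}=(\Lxi)^{\vee}\oplus x_{i}(\Lhatxi{})^{\vee}$, and by the lemma $(\Lxi)^{\vee}$ is the lex segment of $Q_{n-d}$ of dimension $\binom{n-1}{d-1}-|\Vxi|=|(\Vxi)^{\vee}|$ while $(\Lhatxi{})^{\vee}$ is the lex segment of $Q_{n-1-d}$ of dimension $\binom{n-1}{d}-|\Vhatxi{}|=|(\Vhatxi{})^{\vee}|$. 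On the other hand, unwinding the definition of $x_{i}$-compression for the decomposition $V^{\vee}=(\Vxi)^{\vee}\oplus x_{i}(\Vhatxi{})^{\vee}$, the $x_{i}$-compression of $V^{\vee}$ is exactly the direct sum of the lex segment of $Q_{n-d}$ of dimension $|(\Vxi)^{\vee}|$ and $x_{i}$ times the lex segment of $Q_{n-1-d}$ of dimension $|(\Vhatxi{})^{\vee}|$ --- that is, $T^{\vee}$. The only step carrying genuine content is the lex-segment duality lemma; the rest is bookkeeping, and the thing to watch is that forming the Alexander dual interchanges the two summands of a decomposition and shifts degrees ($d\leftrightarrow n-1-d$ for one summand, $d-1\leftrightarrow n-d$ for the other), so one must track carefully which ring each dual and each lex segment lives in and in which degree.
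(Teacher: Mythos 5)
Your proof is correct and follows essentially the same route as the paper's: it computes the $x_{i}$-decomposition $V^{\vee}=(\Vxi)^{\vee}\oplus x_{i}(\Vhatxi{})^{\vee}$ (duals taken in $Q$), does the same for $T^{\vee}$, and matches dimensions. The one step the paper leaves implicit --- that Alexander duality in $Q$ carries lex segments to lex segments, because complementation of subsets reverses the lex order --- you state and justify explicitly, which makes your write-up more complete but does not change the underlying argument.
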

\begin{proof}
Write $V=\Vone+x_{i}\Vxi$ and $T=\Lone+x_{i}\Lxi$.  Then
$V^{\vee}=(\Vxi)^{\vee}+x_{i}(\Vone)^{\vee}$, and similarly for
$T^{\vee}$ (with the duals on the right-hand side taken in $Q$).  Thus
$|\Lone^{\vee}|
=|\Vone^{\vee}|$ and $|\Lxi^{\vee}|=|\Vxi^{\vee}|$ as necessary.
\end{proof}
}

\begin{theorem}\label{dualSurprise}
Suppose that $V\subset R_{d}$ is both Gotzmann and \gdual{}.  Then $V$
is lex in some order.
\end{theorem}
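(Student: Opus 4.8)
The plan is to run an induction on the number of variables $n$, using the two decomposition theorems (Theorem~\ref{decomposeTheorem} and Theorem~\ref{dualDecompose}) simultaneously. First I would handle the base cases: for $n\le 2$, or when $d=0$, $d=1$, $d=n-1$, or $d=n$, every monomial vector space of $R_d$ is trivially a lex segment in any order (there is at most one monomial of each relevant degree up to reordering), so these are immediate. For the inductive step, apply Theorem~\ref{decomposeTheorem} to the Gotzmann space $V$: there is a variable $x_i$ so that in the $x_i$-decomposition $V=\Vone\oplus x_i\Vxi$ both summands are Gotzmann in $Q=R/(x_i)$ and $\Vone\subseteq\mathbf{n}_1\Vxi$. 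The crucial point is that \emph{the same $x_i$ witnesses the dual decomposition}: by Lemma~\ref{topVarLemma} the optimal variable is the one maximizing $|V\cap(x_i)|$, and — since the Alexander dual swaps divisibility by $x_i$ with non-divisibility — maximizing $|V\cap(x_i)|$ is the same as maximizing $|V^\vee\cap(x_i)|$ (one checks $|V\cap(x_i)| = |R_{d-1}| - |(V^\vee)_{\widehat{x_i}}| $, or more simply that $\dim x_i\Vxi = \dim(V^\vee)_{\widehat{x_i}}$ forces the two maximization problems to have the same solution set). So this same $x_i$ may be fed into Theorem~\ref{dualDecompose}, which tells us $\Vone$ and $\Vxi$ are also \gdual{} in $Q$, i.e.\ \emph{both summands are simultaneously Gotzmann and \gdual{} in a squarefree ring on $n-1$ variables}.

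By the inductive hypothesis, then, both $\Vone$ and $\Vxi$ are lex segments in $Q$ — but \emph{a priori} with respect to possibly different orders of the variables $x_1,\dots,\hat x_i,\dots,x_n$. The second main step is to show the orders can be taken to agree, after which one appends $x_i$ at the bottom (or top) to get a lex order on all of $R$. Here I would use the containment relations tying the two summands together: from Theorem~\ref{decomposeTheorem} we have $\Vone=\mathbf{n}_1\Vxi$, and dually from Theorem~\ref{dualDecompose} we have $(\Vone:\mathbf{n}_1)\subseteq\Vxi$; combining these, $\Vxi$ is squeezed between $(\mathbf{n}_1\Vxi:\mathbf{n}_1)$ and $\mathbf{n}_1\Vxi$ in a way that is quite rigid. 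Concretely: since $\Vxi$ is lex in its order and $\Vone=\mathbf{n}_1\Vxi$ is lex in its order, and these two lex segments are linked by the shadow operation, the linking constrains the degree-$(d-1)$ lex order and the degree-$d$ lex order to be compatible; I would make this precise by showing that the unique lex order realizing $\Vxi$ in degree $d-1$ also realizes $\mathbf{n}_1\Vxi$ in degree $d$ (this is essentially the statement that the shadow of a lex segment is lex in the same order, run in reverse — $\Vone$ determines the order on the variables that actually appear, and $\Vxi$ agrees on those). Once a common order $<$ on $\{x_1,\dots,\hat x_i,\dots,x_n\}$ is fixed, extend it by declaring $x_i$ smallest; then $V=\Vone\oplus x_i\Vxi$ with $\Vone$ a lex segment not meeting $(x_i)$ and $\Vxi$ a lex segment, and $\Vone=\mathbf{n}_1\Vxi$ is exactly the condition (cf.\ the proof of Theorem~\ref{reconstructTheorem}) that $V$ itself is a lex segment in this order.

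The main obstacle I anticipate is exactly the order-compatibility step: the inductive hypothesis gives lex orders for $\Vone$ and $\Vxi$ independently, and one must rule out the possibility that they genuinely disagree. The equation $\Vone=\mathbf{n}_1\Vxi$ is what saves us — the variables occurring in generators of $\Vxi$ are determined, and the lex order on them is pinned down by $\Vxi$ being an initial segment; $\Vone$, being the shadow, cannot then be lex in a \emph{different} order unless $\Vxi$ was also lex in that order. Degenerate cases where $\Vxi$ is all of $Q_{d-1}$ or $(0)$ (so $\Vone$ is $Q_d$ or $(0)$, and "the order" is unconstrained) are harmless and should be dispatched separately. I would also double-check the degenerate scenario flagged before Theorem~\ref{decomposeTheorem}, where $V$ is $(0)$ or all of $R_d$ and the property in Lemma~\ref{exchange} holds for every variable — there $V$ is lex in every order and there is nothing to prove.
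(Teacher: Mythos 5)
Your set-up follows the paper exactly: minimal counterexample in $n$, choose $x_i$ maximizing $|V\cap(x_i)|$, observe this also maximizes $|V^\vee\cap(x_i)|$, apply Lemma~\ref{topVarLemma} to both $V$ and $V^\vee$, and invoke minimality of $n$ to conclude that $\Vone$ and $\Vxi$ are lex in $Q$. But after that you diverge, and that is where the gap sits. You assert that Theorem~\ref{decomposeTheorem} gives $\Vone=\mathbf{n}_1\Vxi$; it gives only $\Vone\subseteq\mathbf{n}_1\Vxi$, and the dual constraint $(\Vone:\mathbf{n}_1)\subseteq\Vxi$ from Theorem~\ref{dualDecompose} does not upgrade this to equality (both containments can be strict simultaneously). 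Without equality, the ``shadow pins down the order'' step has nothing to grip. Even granting it, the claim is not rigorous: a lex segment need not determine a unique order (the segments $(0)$, $Q_{d-1}$, $Q_d$, and any segment omitting a variable all admit many compatible orders), so $\Vxi$ lex and $\Vone=\mathbf{n}_1\Vxi$ lex do not automatically share an order by the argument sketched. You flag the cases $\Vone=0$ and $\Vxi=Q_{d-1}$ as edge cases to be dispatched separately, but they are in fact the only cases that occur, and proving that is the whole point.

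The missing idea is a dimension count showing the ``non-degenerate'' situation you are trying to reconcile --- $\Vone\neq 0$ and $\Vxi\neq Q_{d-1}$ --- is impossible. If $\Vone\neq 0$ then $V\not\subseteq(x_j)$ for all $j$ (by maximality of $|V\cap(x_i)|$), so $\mathbf{m}_{n-d-1}V=R_{n-1}$; Gotzmann persistence transfers this to the lexification $L$ of $V$ in an order putting $x_i$ first, so $L\not\subseteq(x_i)$, and an initial segment not contained in $(x_i)$ must contain all of $(x_i)\cap R_d$, giving $|L|>|Q_{d-1}|$. Symmetrically, $\Vxi\neq Q_{d-1}$ yields, via $V^\vee$, that $|L^\vee|>|Q_{n-d-1}|=|Q_d|$ for the lexification $L^\vee$ of $V^\vee$. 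Then $|V|+|V^\vee|=|L|+|L^\vee|>|Q_{d-1}|+|Q_d|=|R_d|$, contradicting $|V|+|V^\vee|=|R_d|$. So the decomposition is always degenerate, $V$ is already lex in the order with $x_i$ first, and there is no order-compatibility problem to solve. This counting argument is the actual content of the paper's proof and is what your proposal is missing.
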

\begin{proof}
Suppose not.  Then there exists a counterexample $V\subset R_{d}$
where $R=\field[x_{1},\dots,x_{n}]/(x_{1}^{2},\dots,x_{n}^{2})$ with $n$
minimal.  
Let $x_{i}$ be such that $|V\cap (x_{i})|$ is maximal.  Then
$|V^{\vee}\cap (x_{i})|$ is maximal as well, and Lemma
\ref{topVarLemma} applies to both $V$ and $V^{\vee}$.  
Thus
$\Vone$ and $\Vxi$ are both Gotzmann and \gdual{}, so, by the
minimality of $n$, both are lex in $Q$.  Since $V$ is not lex, 
we have $\Vone\neq 0$ and $\Vxi\neq Q_{d-1}$.
Since $\Vone\neq 0$, we have $\mathbf{m}_{n-d-1}V=R_{n-1}$.  Thus the
lexification of $V$ (in any order where $x_{i}$ comes first) must
contain at least one monomial not divisible  
by $x_{i}$.  Similarly, the lexification of $V^{\vee}$ must contain at
least one monomial not divisible by $x_{i}$.   Thus, if $L$ and
$L^{\vee}$ are the lexifications of $V$ and $V^{\vee}$, respectively,
\idiot{$L$ and $L^{\vee}$ are also duals of each other.  This looks
  like a pain to prove and isn't necessary.}
we have
\begin{align*}
|L|+|L^{\vee}|&\geq  |Q_{d-1}|+1 +|Q_{n-d-1}|+1\\
&\gneqq |Q_{d-1}|+|Q_{d}|\\
&= |R_{d}|.
\end{align*}
On the other hand, $|L|+|L^{\vee}|=|V|+|V^{\vee}|=|R_{d}|$.  Thus,
such a minimal counterexample cannot exist.
\end{proof}

Note that Theorem \ref{dualSurprise} is not a theorem about ideals.
If an ideal $I$ is both Gotzmann and \gdual{}, then Theorem
\ref{dualSurprise} guarantees that every degreewise component $I_{d}$
is lex in some order, but does not guarantee a consistent order.  For
example, the ideal $I=(bc,abd,abe,acd,ace,ade)\subset \field[a,b,c,d,e]/(a^{2},b^{2},c^{2},d^{2},e^{2})$ is
Gotzmann and 
\gdual{}, but is not lex in any order.  The component $I_{p}$ is lex
with respect to the order $a>b>c>d>e$ for $p\neq 2$, and with respect
to the order $b>c>a>d>e$ for $p<3$, but no lex order works in both
degrees two and three.

\todo{delete this:
\subsection{Compressed Gotzmann ideals}
\todo{Prove something worthwhile here.}

We would like to understand the ideals which are Gotzmann but not
lex.  My hunch is  that these are rare; perhaps there is even
something like a restriction on their Hilbert functions.  This is the
beginning of an attempt to get at that, if it exists.

Let $I$ be Gotzmann, and let $J$ be its compression.  Then $J$ is also
Gotzmann.  Let $d$ be the maximal degree in which $J$ is not lex; that
is, $J_{d+1}$ is lex.  Decompose $J_{d}=T_{1}\oplus aT_{a}$.  By
induction on the number of variables we have $T_{a}\neq Q_{d-1}$ and
$T_{1}\neq 0$.  However, $\mathbf{n}_1 T_{a}=Q_{d}$.

Hence, every monomial in $Q_{d-1}\smallsetminus T_{a}$ is divisible by
$x_{n}$.  If there are $k$ such monomials, the first of these is
$(x_{n-d+1}\dots x_{n-d+k-1})(x_{n-d+k+1}\dots x_{n})$ (the first
factor being empty if $k=1$).  In particular, $k\leq d-1$ and $T_{a}$
contains everything divisible by $x_{n-d}$.  

Now the last $k$ monomials of $T_{1}$ must be divisible by $x_{n}$ as
well, since replacing them with the missing monomials from $T_{a}$
must not change the Hilbert function.  This seems like a restriction
on the Macaulay representation of $T_{1}$.  Is it?

Similarly, if $J$ is generated in degree $d-\ell$, and we write
$J=J_{1}\oplus aJ_{a}$, then all the missing monomials of $J_{a}$ are
divisible by $x_{n-\ell}$ or some later variable, and $J_{a}$ contains
everything divisible by $x_{n-d}$.  If $J_{a}$ is missing $k$
monomials, then the last $k$ monomials of $J_{1}$ are all divisible by
$x_{n-\ell}$ or some later variable; in fact, the number of these
monomials with $\max(m)=x_{p}$ is equal to the number of the missing
monomials from $J_{a}$ with $\max(m)=x_{p}$ for all $p$.  Again, this
seems to call for binomial coefficients.  I don't trust myself with them.
}
\providecommand{\bysame}{\leavevmode\hbox to3em{\hrulefill}\thinspace}
\providecommand{\MR}{\relax\ifhmode\unskip\space\fi MR }
\providecommand{\MRhref}[2]{%
  \href{http://www.ams.org/mathscinet-getitem?mr=#1}{#2}
}
\providecommand{\href}[2]{#2}

\end{document}